\newcommand*\bigcdot{\mathpalette\bigcdot@{.5}}
\newcommand*\bigcdot@[2]{\mathbin{\vcenter{\hbox{\scalebox{#2}{$\m@th#1\bullet$}}}}}
\newcommand{\gaussbin}[2]{\genfrac{[}{]}{0pt}{}{#1}{#2}_q}
\titleclass{\subsubsubsection}{straight}[\subsection]
\newcounter{subsubsubsection}[subsubsection]
\renewcommand\thesubsubsubsection{\thesubsubsection.\arabic{subsubsubsection}}
\renewcommand\theparagraph{\thesubsubsubsection.\arabic{paragraph}} 
\newcommand{\RN}[1]{%
  \textup{\uppercase\expandafter{\romannumeral#1}}%
}
\titleformat{\section}{\normalfont\Large\bfseries}{\S\thesection}{1em}{}[]
\newtheorem{Theorem}{Theorem}[section]
\newtheorem{Corollary}[Theorem]{Corollary}
\newtheorem{Lemma}[Theorem]{Lemma}
\newtheorem{Definition}[Theorem]{Definition}
\newtheorem{Remark}[Theorem]{Remark}
\def\bF{\mathbb{F}}
\def\Fq{\mathbb{F}_q}
\def\GL{\mathrm{GL}}
\def\PG{\mathrm{PG}}
\def\PGL{\mathrm{PGL}}
\definecolor{amber(sae/ece)}{rgb}{1.0, 0.49, 0.0}
	\definecolor{darkcyan}{rgb}{0.0, 0.55, 0.55}
	\definecolor{darkseagreen}{rgb}{0.56, 0.74, 0.56}
	\definecolor{salmon}{rgb}{1.0, 0.55, 0.41}
\definecolor{ForestGreen}{RGB}{34,139,34}
\definecolor{carminepink}{rgb}{0.92, 0.3, 0.26}
	\definecolor{electricyellow}{rgb}{1.0, 1.0, 0.0}
	\definecolor{chromeyellow}{rgb}{1.0, 0.65, 0.0}
	\definecolor{babyblue}{rgb}{0.54, 0.81, 0.94}
 \def\bF{\mathbb{F}}
\def\Fq{\mathbb{F}_q}
\def\GL{\mathrm{GL}}
\def\F2h{\mathbb{F}_{2^h}}
\def\PG{\mathrm{PG}}
\def\GL{\mathrm{GL}}
\def\PGL{\mathrm{PGL}}
\def\cP{\mathcal{P}}
\def\cV{\mathcal{V}}
\def\cZ{\mathcal{Z}}
\def\cC{\mathcal{C}}
\def\cH{\mathcal{H}}
\def\cN{\mathcal{N}}
\title{Linear complete symmetric rank-distance codes}
\author{Nour Alnajjarine\footnote{University  of Rijeka (\texttt{nour.alnajjarine@math.uniri.hr})} \;  and \;
Michel Lavrauw\footnote{University of Primorska; Vrije Universiteit Brussel (\texttt{michel.lavrauw@famnit.upr.si})}}
\begin{document} 
\maketitle
\begin{abstract}
An $\Fq$-linear code of minimum distance $d$ is called complete if it is not contained in a larger $\Fq$-linear code of minimum distance $d$. In this paper, we classify $\Fq$-linear complete symmetric rank-distance (CSRD) codes in $M_{3\times 3}(\Fq)$ up to equivalence. This includes the classification of $\Fq$-linear maximum symmetric rank-distance (MSRD) codes in $M_{3\times 3}(\Fq)$. Our approach is mainly  geometric, and our results contribute towards the classification  of nets of conics in $\PG(2, q)$.

\end{abstract}
\textbf{Keywords:}
\hspace{0.2cm} 
Complete Codes,\hspace{0.2cm} Symmetric Rank-Distance Codes,\hspace{0.2cm} MRD Codes,\hspace{0.2cm} Nets of Conics

\section{Introduction}\label{SRDINTRO}

Let $M_{n\times m}(\Fq)$ be the set of $n\times m$ matrices defined over $\Fq$. A {\it rank-distance code} is a subset $C$ of $M_{n \times m}(\mathbb{F}_q)$ equipped with the rank-distance metric defined as 
$d(A,B)=rank(A-B)$ for $A, B \in C$. These codes were first introduced by Delsarte in \cite{Delsarte} and were studied by Gabidulin and Roth in \cite{Gabidulin,Roth}. They have been extensively studied due to their wide range of applications, including cryptography and network coding \cite{crypto,networkcoding}. \\
  
The {\it minimum distance} of $C$, denoted by $d(C)$ or simply $d$, is defined as  
\[
d(C) = \min \{d(X, Y) : X, Y \in C, X \neq Y\},
\]
where $|C|\geq 2$. For a rank-distance code $C$ with minimum distance $d$, the size of $C$ is bounded above by $ q^{n(m-d+1)}$ (the Singleton-like bound \cite{Delsarte}). A rank-distance code $C$ achieving this bound is called a {\it maximum rank-distance (MRD) code}. The existence of MRD codes for all {\it parameters} $n$, $m$, $q$ and $d$ was proved by Delsarte in \cite{Delsarte}. A new %construction 
family of MRD codes, alongside the classical {\it (generalised) Gabidulin codes}, can be found in \cite{sheekey2}. \\

Certain rank-distance codes, satisfying additional properties, have been studied in several papers. For example, in \cite{Schmidt, SymmMRD,Delsarte2,Schmidt1,Schmidt2}, rank-distance codes of symmetric, alternating, and Hermitian matrices were investigated. In this paper, we are interested in $\Fq$-linear {\it symmetric rank-distance (SRD) codes} defined as subspaces of the space of symmetric $n\times n$ matrices over $\Fq$. 
By \cite{Schmidt}, the dimension of an $\Fq$-linear SRD code $C$ is bounded above by 
\begin{eqnarray}\label{eqn:dim_bound}
 \dim(C)\leq  \begin{cases}
 \frac{n(n-d+2)}{2} &\text{if } n-d \text{ is even},\\
 \frac{(n+1)(n-d+1)}{2} &\text{if } n-d \text{ is odd}.\\
  \end{cases}
\end{eqnarray} 
An $\mathbb{F}_q$-linear SRD code that achieves this bound is called an {\it $\Fq$-linear maximum symmetric rank-distance (MSRD) code}. In \cite{SymmMRD}, a sphere-packing bound was established for SRD codes, and non-trivial  codes achieving this bound ({\it perfect codes}) were characterized as those having odd order $n$ and minimum distance 3.

\begin{Definition}
An $\Fq$-linear code of minimum distance $d$ is called {\rm complete} if it is not contained in a larger $\Fq$-linear code of minimum distance $d$ (in the same vector space).

\end{Definition}

Geometrically, an $\Fq$-linear rank-distance code in  $M_{n\times n}(\Fq)$ defines a subspace of the projective space $\PG(n^2- 1, q)$. The geometric setting for the study of MRD codes described below, was first studied in the context of semifields, see \cite{Lavrauw2008,Lavrauw2011}, which correspond to MRD codes with $d=n$. Note that the study of semifields predates the study of MRD codes, and much of the theory which was developed in the context of semifields can be carried over to the theory of MRD codes, see \cite{Lunardon2017,sheekey}.
The set of rank-one matrices in $M_{n\times n}(\Fq)$ coincides with the set of points of the {\it Segre variety } $\mathcal{S}_{n,n}(\Fq)$ in $\PG(n^2- 1, q)$. An $\Fq$-linear MRD code in $M_{n\times n}(\Fq)$ of minimum distance $d $ corresponds to an $(n(n-d+1)-1)$-dimensional subspace of $\PG(n^2 - 1, q)$ disjoint from the $(d-1)$-st secant variety of the Segre variety $\mathcal{S}^{(d-1)}_{n,n}(\Fq)$. For SRD codes, we can view an $\Fq$-linear MSRD code in $M_{n\times n}(\Fq)$ of minimum distance $d $ as a subspace of dimension determined by (\ref{eqn:dim_bound}), disjoint from the $(d-1)$-st secant variety of the {\it Veronese variety} $\cV_{n}(\Fq)$ in $\PG(N-1,q)$, where
$$N=\frac{n(n+1)}{2}.$$ We direct the reader to Section \ref{geometricInt} for further details on this geometric setting.\\

In this paper, we study $\Fq$-linear {\it complete symmetric rank-distance (CSRD)  codes} in $M_{3\times 3}(\Fq)$ of minimum distance $d$ as  subspaces of $\PG(5,q)$ which intersect the $(d-1)$-st secant variety of the Veronese variety $\cV_3^{(d-1)}(\Fq)$ trivially and which are maximal with respect to this property. Clearly, every MSRD code is complete, but we will show that the converse does not hold for $q$ even. Further, we classify $\Fq$-linear CSRD codes in $M_{3\times 3}(\Fq)$ with $d=2$ up to equivalence by employing geometric techniques based on the correspondence between these codes and linear systems of conics in $\PG(2, q)$.\\

 A {\it linear system of conics} in $\PG(2,q)$ is defined as a subspace of the projective geometry $\PG({\mathcal{F}}_2(2, q))$, where ${\mathcal{F}}_2(2, q)$ denotes the space of quadratic forms defined on the projective plane $\PG(2,q)$. Particularly, subspaces of $\PG({\mathcal{F}}_2(2,q))$ with dimensions 1, 2, 3, and 4 are referred to as pencils, nets, webs, and squabs of conics, respectively, see \cite{lines, solidsqeven, nets, planesqeven, webs}. \\

Herein, we classify $\Fq$-linear complete symmetric rank-distance (CSRD) codes in $M_{3\times 3}(\Fq)$, establishing the existence of $3$ (resp. $6$) $\Fq$-linear CSRD codes in $M_{3\times 3}(\Fq)$ with $d=2$ up to equivalence for $q$ odd (resp. even). 
\begin{Theorem}\label{mainodd}
     There are $3$ equivalence classes of $\Fq$-linear CSRD codes in $M_{3\times 3}(\Fq)$ of minimum distance $2$ for $q$ odd. Moreover, each $\Fq$-linear CSRD code in $M_{3\times 3}(\Fq)$, $q$ odd,  of minimum distance $2$ is an MSRD code.
 \end{Theorem}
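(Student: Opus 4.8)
The plan is to first translate the statement into projective geometry and then reduce it to a classical problem about pencils of conics. A minimum-distance-$2$ code $S$ is a subspace of the $6$-dimensional space of symmetric $3\times 3$ matrices containing no nonzero rank-one matrix; since the rank-one symmetric matrices are exactly the points of the Veronese surface $\cV_3$, the code corresponds to a subspace of $\PG(5,q)$ disjoint from $\cV_3$, and \emph{complete} means maximal with this property. By the dimension bound (\ref{eqn:dim_bound}) with $n=3$, $d=2$, any such $S$ has vector dimension at most $4$, i.e. projective dimension at most $3$, with equality precisely for MSRD codes. Thus the theorem splits into two tasks: (i) show every maximal subspace disjoint from $\cV_3$ has projective dimension exactly $3$; and (ii) classify the projective $3$-spaces disjoint from $\cV_3$ up to $\PGL(3,q)$, showing there are exactly $3$ orbits.

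For both tasks I would use the polarity induced by the nondegenerate trace form $\langle A,B\rangle=\operatorname{tr}(AB)$ on symmetric matrices. Since $\langle vv^{\mathsf T},B\rangle=v^{\mathsf T}Bv$, a rank-one matrix $vv^{\mathsf T}$ lies in $S$ if and only if the point $[v]$ lies on every conic of the linear system $S^{\perp}$; hence $S$ is disjoint from $\cV_3$ exactly when the dual system $S^{\perp}$ is \emph{base-point-free}. As the polarity reverses inclusions and sends projective $3$-spaces to pencils, MSRD codes correspond to base-point-free pencils of conics, while complete codes correspond to \emph{minimal} base-point-free systems. Because for $q$ odd every single conic has an $\Fq$-rational point, no $0$-dimensional system is base-point-free, so part (i) amounts to proving that for $q$ odd every base-point-free linear system of projective dimension at least $2$ contains a base-point-free pencil; the essential and tightest case is that every base-point-free \emph{net} of conics contains a base-point-free pencil, the higher-dimensional cases following from the same fibre count with more room to spare.

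To classify base-point-free pencils (part (ii)) I would invoke the classification of pencils of conics in $\PG(2,q)$, $q$ odd, and single out those whose base locus contains no $\Fq$-rational point. Over $\overline{\Fq}$ the base locus of a regular pencil consists of four points counted with multiplicity (non-regular pencils are readily seen to possess rational base points), and the Galois action together with the intersection multiplicities determines the orbit. Excluding rational base points leaves exactly three configurations: four points forming a single Frobenius orbit of length $4$; two conjugate pairs; and two conjugate points of tangency, i.e. a base scheme of type $(2,2)$. A short check that each configuration is realized, is base-point-free, and is unique up to $\PGL(3,q)$ --- reading the degenerate members off the discriminant cubic $\det(\lambda A+\mu B)$ (splitting as linear times irreducible quadratic, three distinct linear factors, or a factor with a repeated root, respectively) as the distinguishing invariant --- yields the three equivalence classes, and hence three MSRD codes.

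The main obstacle is the net lemma in part (i). Writing a base-point-free net as $\langle Q_0,Q_1,Q_2\rangle$, the pencil of conics through a point $[v]$ is the line of the net dual to $\phi([v])=[Q_0(v):Q_1(v):Q_2(v)]$, so a pencil of the net is base-point-free precisely when it is not of this form; thus the net contains a base-point-free pencil unless the quadratic map $\phi\colon\PG(2,q)\to\PG(2,q)$, which is generically four-to-one over $\overline{\Fq}$, is a bijection on $\Fq$-rational points. Proving that no such $\phi$ can be a bijection when $q$ is odd is the crux, and is exactly where the parity of $q$ enters: in characteristic two the squaring map is bijective, permitting $\phi$ to be bijective and hence base-point-free nets with no base-point-free pencil, which is the source of the complete codes that fail to be MSRD and of the additional classes for $q$ even. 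I would settle the odd case by a counting/character-sum estimate on the fibres of $\phi$ (or by appealing to the explicit classification of nets of conics for $q$ odd), showing that the generic four-to-one behaviour is incompatible with bijectivity on rational points.
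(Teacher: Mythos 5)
Your part (i) rests on a lemma that is false, and this is a genuine gap, not a fixable detail. You reduce the theorem to the claim that, for $q$ odd, every base-point-free net of conics contains a base-point-free pencil, equivalently that the quadratic map $\phi$ attached to a base-point-free net is never a bijection on $\Fq$-points. Here is a counterexample valid for every odd $q$: identify $\Fq^3$ with $\bF_{q^3}$ and take the net $\cN=\{\cZ(\mathrm{Tr}(\alpha x^{2})):\alpha\in\bF_{q^3}^{*}\}$, where $\mathrm{Tr}$ denotes the trace of $\bF_{q^3}/\Fq$. This net is base-point-free and all of its members are non-singular conics. Its map $\phi$ is, after an $\Fq$-linear change of coordinates, the map $[x]\mapsto[x^{2}]$ on $\bF_{q^3}^{*}/\Fq^{*}=\PG(2,q)$, and this \emph{is} a bijection for $q$ odd: if $x^{2}=\lambda y^{2}$ with $\lambda\in\Fq^{*}$, then taking norms to $\Fq$ gives $\lambda^{3}\in\square_q$, hence $\lambda=\mu^{2}$ with $\mu\in\Fq^{*}$, so $x=\pm\mu y$ and $[x]=[y]$. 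Equivalently, every pencil $\{\mathrm{Tr}(\alpha x^{2}):\alpha\in H\}$, $H$ a $2$-dimensional $\Fq$-subspace, has a rational base point: its base points are the $[x]$ with $x^{2}\in H^{\perp}\setminus\{0\}=\Fq^{*}\gamma$, and since a non-square of $\Fq$ stays a non-square in the odd-degree extension $\bF_{q^3}$, one has $\square_{q^3}\cdot\Fq^{*}=\bF_{q^3}^{*}$, so every coset $\Fq^{*}\gamma$ contains squares. Thus bijectivity of $\phi$ is not a characteristic-$2$ phenomenon, and no fibre-counting or character-sum estimate can prove your lemma.

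The theorem survives because this counterexample has no singular conics at all: it corresponds to a plane of constant rank $3$ (Theorem \ref{thm:cst3}), i.e.\ to a complete code of minimum distance $3$, not $2$. This is exactly the distinction your translation drops: ``complete of minimum distance $2$'' is \emph{not} the same as ``maximal subspace disjoint from $\cV(\Fq)$'' — the subspace must also contain a rank-$2$ point, since completeness only forbids extensions with the \emph{same} minimum distance. Indeed, maximal planes disjoint from $\cV(\Fq)$ do exist for $q$ odd (the constant-rank-$3$ planes), so your statement ``every maximal subspace disjoint from $\cV_3$ has projective dimension exactly $3$'' is false as written. The statement you actually need is the paper's Theorem \ref{maximalplanesqodd}: every plane of minimum rank \emph{exactly} $2$ lies in a solid of minimum rank $2$; dually, every base-point-free net containing at least one singular conic contains a base-point-free pencil. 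Under that hypothesis your fibre count does help (a real or imaginary line pair is a $\phi$-preimage of a line with $2q+1$ resp.\ $1$ points, so $\phi$ cannot be bijective), but you must still rule out base-point-free nets whose only singular members are double lines — these exist for $q$ even and are precisely the source of the non-MSRD complete codes in Theorem \ref{maineven}, so excluding them for $q$ odd requires a genuine argument. The paper avoids all of this with one short count: the $q^{2}+q+1$ solids through a plane $\pi$ of minimum rank $2$ cannot all meet $\cV(\Fq)$, because any secant of the conic $\cC(P)$ through a rank-$2$ point $P\in\pi$ makes $Q\mapsto\langle\pi,Q\rangle$ non-injective on $\cV(\Fq)$. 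Finally, a smaller gap: your part (ii), which is otherwise a reasonable dual route to the three pencil types, classifies only up to $K\cong\PGL(3,q)$, while code equivalence is defined by the larger group $G$ induced by $\GL(3,q)\times\GL(3,q)$ on $\PG(8,q)$; to conclude ``exactly $3$ equivalence classes'' one must also check, as the paper does via rank distributions, that the three $K$-orbits do not merge as $G$-orbits.
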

\begin{Theorem}\label{maineven}
    There are $6$ equivalence classes of $\Fq$-linear CSRD codes in $M_{3\times 3}(\Fq)$ of minimum distance $2$ for $q\geq 4$ even, $3$ of which are  MSRD codes.
\end{Theorem}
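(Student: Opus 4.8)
The plan is to translate the problem into projective geometry and then reduce it to the classification of nets and webs of conics. Under the action induced by $\PGL(3,q)$ via the congruence $A\mapsto P^{T}AP$ (together with the Frobenius automorphisms) on the $6$-dimensional space of symmetric $3\times 3$ matrices, equivalence classes of $\Fq$-linear CSRD codes of minimum distance $2$ correspond to orbits of subspaces $W\subseteq\PG(5,q)$ with $W\cap\cV_3=\emptyset$ that are maximal with respect to inclusion. By the bound (\ref{eqn:dim_bound}) with $n=3$, $d=2$, any such $W$ has projective dimension at most $3$; those of projective dimension $3$ (the webs disjoint from $\cV_3$) are exactly the MSRD codes, while the complete-but-not-maximum codes are the maximal $W$ of smaller dimension. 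So the task splits into (i) listing the webs disjoint from $\cV_3$, and (ii) listing the maximal nets, then ruling out maximal subspaces of projective dimension $\le 1$.

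The structural reason the even case differs from the odd case (Theorem \ref{mainodd}) is the nucleus of the Veronese surface. Let $N\subseteq\PG(5,q)$ be the plane of symmetric matrices with zero diagonal; in characteristic $2$ these are the alternating matrices, and a short minor computation shows every nonzero such matrix has rank exactly $2$, so $N\cap\cV_3=\emptyset$ and $N$ is a net of minimum distance $2$. I claim $N$ is \emph{maximal}. Take any symmetric $B\notin N$. Since $q$ is even the Frobenius map is bijective, so each diagonal entry $b_{ii}$ is a square; writing $b_{ii}=v_i^{2}$ and $v=(v_1,v_2,v_3)\ne 0$, the matrix $vv^{T}+B$ has zero diagonal, hence lies in $N$, and therefore $vv^{T}\in\langle N,B\rangle$. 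Thus every enlargement $\langle N,B\rangle$ meets $\cV_3$, so $N$ is a complete code that is not MSRD. The same argument fails for $q$ odd, where $b_{ii}$ need not be a square, which is why no such code occurs in Theorem \ref{mainodd}.

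It remains to enumerate the orbits. For the webs I would invoke the classification of webs of conics \cite{webs}, retain only the orbits containing no point of $\cV_3$ (equivalently, no member of the linear system is a repeated line), and check that for even $q\ge 4$ exactly $3$ such orbits survive; these give the $3$ MSRD classes. For the nets I would use the classification of nets of conics \cite{nets, planesqeven}: among the nets disjoint from $\cV_3$, discard those that extend to a web disjoint from $\cV_3$ (these are not complete), and show that the maximal remaining ones form exactly $3$ orbits, one being the nucleus plane $N$ above. Finally, no complete code has smaller dimension: if $W$ is disjoint from $\cV_3$ with projective dimension at most $1$, then the join of $W$ with $\cV_3$ has projective dimension at most $\dim W+3\le 4<5$, so there is a point $C$ outside it, and then $\langle W,C\rangle$ is still disjoint from $\cV_3$; hence $W$ is not maximal. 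Summing gives $3+3=6$ equivalence classes, $3$ of them MSRD.

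The main obstacle is step (ii) in even characteristic. The correspondence between symmetric matrices and quadratic forms degenerates when $q$ is even, since $x^{T}Ax$ depends only on the diagonal of $A$, so the relevant invariant is the matrix rank rather than the singularity type of the associated conic, and the orbit analysis cannot be read off directly from the classical odd-$q$ theory of nets. Concretely, the hard parts are (a) proving that precisely two further nets beyond $N$ are maximal, which requires deciding extendability orbit-by-orbit against the full list of webs, and (b) verifying completeness for each surviving candidate, namely that no rank-one matrix lies in any one-dimensional extension. The hypothesis $q\ge 4$ is used to exclude the sporadic coincidences in orbit sizes that occur over the smallest even fields.
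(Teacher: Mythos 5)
Your overall skeleton matches the paper's: split the problem into maximal solids (the MSRD codes), maximal planes, and a counting argument excluding maximal subspaces of projective dimension at most $1$; and your direct argument that the zero-diagonal plane $\pi_{\mathcal{N}}$ is complete (writing $b_{ii}=v_i^2$ and producing the rank-one matrix $vv^T\in\langle \pi_{\mathcal{N}},B\rangle$ in any extension) is a clean, more elementary alternative to the paper's argument, which instead bounds the intersection of $\pi_{\mathcal{N}}$ with the solids in $\Omega_{7}\cup\Omega_{13}\cup\Omega_{14}$. However, there is a genuine gap at your step (ii). You propose to read the list of maximal nets off ``the classification of nets of conics \cite{nets,planesqeven}'', but no such classification exists in those references for the case you need: \cite{nets} treats $q$ odd, and \cite{planesqeven} classifies only planes meeting $\cV(\Fq)$ \emph{non-trivially}. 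Planes of $\PG(5,q)$, $q$ even, disjoint from $\cV(\Fq)$ --- exactly the candidates for complete-but-not-maximum codes --- were unclassified, and classifying the relevant ones is the core of the paper's proof. The paper shows that planes whose rank-$\le 2$ locus is a line of $\pi_{\mathcal{N}}$ form a single $K$-orbit $\Sigma_{16}$ with hyperplane-orbit distribution $[q+1,0,0,q^2]$; that planes whose rank-$\le 2$ locus is a single point of $\pi_{\mathcal{N}}$ form a single $K$-orbit $\Sigma_{18}$ with hyperplane-orbit distribution $[1,0,0,q^2+q]$ (this requires auxiliary results, e.g.\ the flag count proving $r_{2,n}(\pi)=h_1(\pi)$ for every plane $\pi$, and an analysis of constant-rank-three lines); and that these planes are maximal because their associated nets contain no line pairs, whereas every pencil associated with a solid in $\Omega_{7}\cup\Omega_{13}\cup\Omega_{14}$ contains a pair of conjugate imaginary lines. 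You explicitly flag these as ``the hard parts (a) and (b)'' but do not carry them out, so the decisive content of the theorem is missing from the proposal.

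A second, smaller gap concerns the notion of equivalence. The theorem counts classes under \emph{linear} equivalence $X\mapsto AXB$, i.e.\ orbits under the group $G$ stabilising the Segre variety in $\PG(8,q)$, while your congruence action $A\mapsto P^TAP$ (with field automorphisms) only yields symmetric equivalence, i.e.\ $K$-orbits. Distinct $K$-orbits could in principle merge into a single $G$-orbit, in which case your count of six would overshoot. The paper closes this by observing that the six subspaces (solids in $\Omega_{7},\Omega_{13},\Omega_{14}$ and planes in $\Sigma_{\mathcal{N}},\Sigma_{16},\Sigma_{18}$) have pairwise distinct rank distributions, which are $G$-invariants; some argument of this kind must be added for your count to prove the stated theorem.
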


This implies that, while every $\Fq$-linear MSRD code in $M_{3 \times 3}(\mathbb{F}_q)$ is complete, the converse does not necessarily hold when $q$ is even and $d=2$, and therefore each $\mathbb{F}_q$-linear SRD code in $M_{3 \times 3}(\mathbb{F}_q)$ can be extended to an MSRD code when $q$ is odd. When $q$ is even, there exist $3$-dimensional CSRD codes in $M_{3 \times 3}(\mathbb{F}_q)$ that cannot be extended to MSRD codes. Furthermore, in Corollary \ref{C1}, we show that the non-trivial perfect $\Fq$-linear CSRD codes in $M_{3 \times 3}(\mathbb{F}_q)$ correspond to nets of conics in $\PG(2,q)$  that exclude singular conics. \\

Our results also contribute to the classification of nets of conics in $\mathrm{PG}(2, q)$. In Theorem  \ref{[0,q+1,0,q^2]}, we show that nets  with $q+1$ double lines and $q^2$ non-singular conics form one  $\PGL(3,q)$-orbit. Similarly, for nets of conics with a unique double line and $q^2+q$ non-singular conics (see Theorem \ref{[0,1,0,q^2+q]}). In Corollary \ref{[0,0,0,q^2+q+1]}, we conclude that nets with no singular conics form a unique $\PGL(3,q)$-orbit for $q$ even and two $\PGL(3,q)$-orbits for $q$ odd. Our results also imply that a net of conics with an empty base and at least one pair of real or conjugate imaginary lines necessarily contains a pencil of conics with an empty base (see Corollaries \ref{netsb0odd} and \ref{netsb0even}).\\

The paper is structured as follows. In Section \ref{pre}, we present the definitions and theory needed  for our main results including Theorems \ref{mainodd} and \ref{maineven}. The proofs of these results are provided in Section \ref{mainsec}. Specifically, the results concerning $\Fq$-linear CSRD codes in $M_{3 \times 3}(\Fq)$ with $d = 2$ (resp. $d = 3$) are discussed in Sections \ref{Subd=2} (resp. \ref{Subd=3}). In Section \ref{linearsystems}, we give the applications of our results to the study of nets of conics in $\PG(2,q)$. In Section \ref{Comparison with known constructions of MSRD codes}, we compare our results with a known construction of MSRD codes.

\section{Preliminaries}\label{pre}

This section outlines the definitions and theory required for our proofs. Some of the results are well-known and can be found in \cite{galois geometry,hirsch,Havlicek2003}, while others are more recent results from \cite{webs,solidsqeven,planesqeven,roots,lines,nets,LaPoSh2021,sheekey}.\\

The finite field of order $q$  is denoted by $\bF_q$, where $q = p^h$ and $p$ is a prime number. The subset of squares in $\bF_q$ is represented by $\square_q$. The $n$-dimensional projective space over $\bF_q$ is denoted by $\PG(n, q)$. 
As usual, the number of $(k-1)$-dimensional subspaces of $\PG(n-1,q)$ is denoted by the Gaussian binomial coefficient $\gaussbin{n}{k}$.
A {\it form on $\PG(n, q)$} is a homogeneous polynomial in the ring $\bF_q[X_0, \ldots, X_n]$. The algebraic variety in $\PG(n, q)$ defined as the common zero set of the forms $g_1, \ldots, g_k$ is written as $\cZ(g_1, \ldots, g_k)$.

 \subsection{Linear systems of conics}

 A {\it conic} $\mathcal{C}$ in $\PG(2,q)$ is defined as the zero locus $\cZ(f)$ of a quadratic form $f$ on $\PG(2,q)$. There are four types of conics in $\PG(2,q)$ up to projective equivalence: double lines, pairs of distinct (real) lines, pairs of conjugate (imaginary) lines over $\bF_{q^2}$, and non-singular conics.\\ 

Let ${\mathcal{F}}_2(2,q)$ denote the 6-dimensional vector space of $2$-forms defined on $\PG(2, q)$. A {\it linear system of conics} in $\PG(2,q)$ is
a subspace of the projectivisation $\PG({\mathcal{F}}_2(2,q))\cong \PG(5,q)$ of ${\mathcal{F}}_2(2,q)$. In particular, $1$, $2$, $3$ and $4$-dimensional subspaces are respectively referred to as {\it pencils, nets,  webs} and {\it squabs of conics}. We denote by $\mathscr{P}=\langle \mathcal{C}_1, \mathcal{C}_2\rangle$ the pencil of conics determined by $\mathcal{C}_1\neq \lambda \mathcal{C}_2$; $\lambda \in \Fq$. A similar notion is adopted to represent the other linear systems. The {\it base} of a linear system of conics is defined as the intersection points of the conics that generate the system. A {\it rank one} linear system of conics is defined as a linear system with at least one double line.

\subsection{The Veronese surface $\cV(\Fq)$ in $\PG(5,q)$}\label{egSolid}

The {\it Veronese surface} $\cV(\bF_q)$ is a 2-dimensional algebraic variety in $\PG(5,q)$ defined as the image of the {\it Veronese map}
\[
\nu: \PG(2,q)\rightarrow \PG(5,q) \quad \text{given by} \quad
(u_0,u_1,u_2) \mapsto (u_0^2,u_0u_1,u_0u_2,u_1^2,u_1u_2, u_2^2).
\]
In this setting it is convenient to use the following notation, consistent with \cite{lines,nets,LaPoSh2021}. A point $P=(y_0,y_1,y_2,y_3,y_4,y_5)$ of $\PG(5,q)$ is represented by a symmetric $3 \times 3$ matrix
\[
M_P=\begin{bmatrix} y_0&y_1&y_2\\
y_1&y_3&y_4\\
y_2&y_4&y_5  \end{bmatrix}.
\]
This representation can be generalized to any subspace of $\PG(5,q)$. For example, the plane $\pi$ spanned by the first three points of the standard frame in $\PG(5,q)$ is represented by 
$$\pi=\begin{bmatrix} x&y&z\\
y&\cdot&\cdot\\
z&\cdot&\cdot  \end{bmatrix},$$ where a “$\cdot$" denotes a zero. 
We also refer to the tables in the Appendix for additional examples of representations of solids and lines in $\PG(5, q)$. 
Throughout the paper, the indeterminates of the coordinate rings of $\PG(2, q)$ and $\PG(5, q)$ are denoted by $(X_0, X_1, X_2)$ and $(Y_0, \ldots, Y_5)$, respectively.
\\

The {\it rank of a point} $P$ in $\PG(5,q)$ is the rank of its associated matrix $M_P$. 
The set of points of rank $1$ in $\PG(5,q)$ coincides with the set of points in  $\mathcal{V}(\Fq)$. The {\it secant variety of $\cV(\Fq)$}, denoted by $\cV^{(2)}(\Fq)$, consists of points of rank at most $2$ in $\PG(5,q)$. The Veronese surface $\cV(\Fq)$ contains $q^2 + q + 1$ {\it conics}, which are defined as the images of lines in $\PG(2, q)$ under the Veronese map $\nu$ (\cite[Remark 2.1]{webs}). For any two points $P, Q \in \cV(\mathbb{F}_q)$, there exists a unique conic passing through them, given by
\[
\mathcal{C}(P, Q) := \nu\left(\langle \nu^{-1}(P), \nu^{-1}(Q) \rangle\right),
\]
and any two such conics intersect in exactly one point. \\

Each conic of $\PG(2, q)$ corresponds to the hyperplane section of $\cV(\Fq)$ defined by:
\begin{eqnarray}\label{eqn:delta}
 \delta:  \cZ\Big(\sum_{0\leqslant i \leqslant j \leqslant 2}a_{ij}X_iX_j\Big)\mapsto\mathcal{Z}(a_{00}Y_0+a_{01}Y_1+a_{02}Y_2+a_{11}Y_3+a_{12}Y_4+a_{22}Y_5).
\end{eqnarray}

A {\it conic plane in $\PG(5, q)$} is defined as a plane intersecting  $\cV(\mathbb{F}_q)$ in a conic of $\cV(\mathbb{F}_q)$. 
 When $q$ is even, all tangent lines to a conic $\mathcal{C}$ in $\mathcal{V}(\mathbb{F}_q)$ are concurrent, meeting at the {\it nucleus} of $\mathcal{C}$. The collection of all nuclei of conics in $\mathcal{V}(\mathbb{F}_q)$ defines a plane in $\PG(5, q)$, known as the {\it nucleus plane} $\pi_{\mathcal{N}}$.   
In the matrix representation, points lying in $\pi_{\mathcal{N}}$ are represented by symmetric $3 \times 3$ matrices with zeros along the main diagonal, i.e., $\pi_{\mathcal{N}} = \mathcal{Z}(Y_0, Y_3, Y_5)$. We denote by $\Sigma_{\mathcal{N}}$ the $K$-orbit $\{\pi_{\mathcal{\mathcal{N}}}\}$.
Every point $R$ of rank two in $\PG(5, q)$ is contained in a unique conic plane $\langle \mathcal{C}(R)\rangle$ in $\cV(\mathbb{F}_q)$. If $R$ lies on the secant $\langle P, Q\rangle$ with $P, Q \in \cV(\Fq)$ then $\mathcal{C}(R)=\mathcal{C}(P,Q)$. If $q$ is even and $R$ is contained in the nucleus plane, then $R$ is the nucleus of $\mathcal{C}(R)$. The {\it tangent lines to $\cV(\mathbb{F}_q)$} are the lines which are contained in a conic plane $\pi$ of $\mathcal{V}(\mathbb{F}_q)$ and which are tangent to the conic $\pi \cap \mathcal{V}(\mathbb{F}_q)$.  
The tangent lines of $\cV(\mathbb{F}_q)$ at any point $P \in \cV(\mathbb{F}_q)$ lie within {\it the tangent plane of $\cV(\mathbb{F}_q)$ at $P$}. Note that, for $q$ odd, there exists a polarity of $\PG(5, q)$ that maps the conic planes of $\cV(\Fq)$ onto the tangent planes of $\cV(\Fq)$ (see e.g. \cite[Theorem 4.25]{galois geometry}).

\subsection{The $\PGL(3,q)$-orbits of linear systems of conics}\label{orbitsofsystems}

There is a one-to-one correspondence between the $\PGL(3,q)$-orbits of linear systems of conics in $\PG(2,q)$ and the $K$-orbits of subspaces in $\PG(5,q)$, where $K$ is a subgroup of $\PGL(6,q)$, isomorphic to $\PGL(3,q)$, described below. The $K$-orbits of points, lines, planes and solids in $\PG(5,q)$ correspond to the $\PGL(3,q)$-orbits of squabs, webs, nets and pencils of conics in $\PG(2,q)$.
The action of the group $K \leqslant \PGL(6, q)$ on the subspaces of $\PG(5, q)$ is defined as the lift of $\PGL(3, q)$ via the Veronese map $\nu$. 
Specifically, $K$ is defined as $K := \alpha(\PGL(3, q))$, where $\alpha~:~\PGL(3,q)\rightarrow \PGL(6,q)$ is the group monomorphism defined as follows.
For a projectivity $\phi_A \in \PGL(3, q)$ defined by the matrix $A \in \GL(3, q)$, the action of the associated projectivity $\alpha(\phi_A) \in \PGL(6, q)$ on points of $\PG(5, q)$ is defined as
\[
\alpha(\phi_A): P \mapsto Q \quad \text{where} \quad M_Q = A M_P A^T,
\]
where $M_P$ and $M_Q$ are the matrices corresponding to the points $P$ and $Q$.  
The group $K$ preserves $\cV(\mathbb{F}_q)$, and if $q > 2$, the group $K$ is the {\it full} setwise stabiliser of $\cV(\mathbb{F}_q)$ in $\PGL(6, q)$. However, for $q = 2$, the full setwise stabiliser of $\cV(\mathbb{F}_2)$ is isomorphic to the symmetric group $\operatorname{Sym}(7)$.\\

\begin{Definition}\label{def:minimum_rank}
\textnormal{
The {\it rank distribution} of a subspace $W \subseteq \PG(5, q)$ is given by the $3$-tuple $[r_1, r_2, r_3]$, where $r_i$ denotes the number of points in $W$ of rank $i$. The {\it minimum rank} of $W$ is defined as the smallest rank among all points $A \in W$. 
}
\end{Definition}
The rank distribution of a subspace is invariant under the action of $K$. The following definition introduces stronger $K$-invariants, initially defined  in \cite{LaPoSh2021}. These invariants are essential for distinguishing $K$-inequivalent subspaces in $\PG(5, q)$.
 \begin{Definition}\label{orbitdist}
\textnormal{
Let $[W_1,W_2,\dots,W_m]$ be the list of $K$-orbits of $r$-dimensional subspaces in $\PG(n,q)$. 
The \textit{$r$-space orbit distribution} of a subspace $W$ of $\PG(n,q)$ is defined as $OD_r(W)=[w_1,w_2,\ldots,w_m]$, where $w_i=w_i(W)$ counts the number of elements of the orbit $W_i$ incident with $W$.
}
\end{Definition}

There are four $\PGL(3,q)$-orbits of squabs of conics corresponding to the $K$-orbits of points in $\PG(5, q)$ defined as: the orbit $\mathcal{P}_1 = \cV(\mathbb{F}_q)$ of rank-$1$ points, consisting of $q^2 + q + 1$ points, the orbit $\mathcal{P}_3$ of rank-$3$ points, containing $q^5 - q^2$ points, and two orbits of rank-$2$ points, whose descriptions depend on whether $q$ is even or odd. For $q$ even, the rank-$2$ orbits are as follows:  $\mathcal{P}_{2,n}$, consisting of the $q^2 + q + 1$ points in the nucleus plane $\pi_{\mathcal{N}}$, and  
    $\mathcal{P}_{2,s}$, consisting of the $(q^2 - 1)(q^2 + q + 1)$ points contained in conic planes but not in $\pi_{\mathcal{N}} \cup \cV(\mathbb{F}_q)$.  For $q$ odd, the rank-$2$ orbits are:  
 $\mathcal{P}_{2,e}$, consisting of $(q^2 + q + 1)q(q+1)/2$ exterior points in conic planes, and  
 $\mathcal{P}_{2,i}$, consisting of $(q^2 + q + 1)q(q-1)/2$ interior points in conic planes. The point-orbit distribution of a subspace $W \subset \PG(5, q)$ is given by:  
\[
OD_0(W) = 
\begin{cases} 
    [r_1, r_{2,n}, r_{2,s}, r_3], & \text{if } q \text{ is even}, \\ 
    [r_1, r_{2,e}, r_{2,i}, r_3], & \text{if } q \text{ is odd},
\end{cases}
\]
where:  
$r_1 = r_1(W)$ and $r_3 = r_3(W)$ are the numbers of rank-$1$ and rank-$3$ points in $W$, respectively,  
     $r_{2,n} = r_{2,n}(W)$ is the number of rank-$2$ points in $W \cap \pi_{\mathcal{N}}$,  
 $r_{2,s} = r_{2,s}(W)$ is the number of rank-$2$ points in $W \setminus \pi_{\mathcal{N}}$,  
    $r_{2,e} = r_{2,e}(W)$ is the number of exterior rank-$2$ points in $W$, and  $r_{2,i} = r_{2,i}(W)$ is the number of interior rank-$2$ points in $W$. Note that the point-orbit distribution of a subspace in $\PG(5,2)$ is not an invariant under $\operatorname{Sym}_7$ (the full setwise stabiliser of $\cV(\mathbb{F}_2)$), since the nucleus plane is not preserved under the action of $\operatorname{Sym}(7)$. \\ 

In a similar manner, one can define the {\it line-}, {\it plane-}, {\it solid-}, and {\it hyperplane-orbit distributions} of $W$ for $r = 1, 2, 3, 4$, respectively. These distributions are particularly useful for determining the $K$-orbit of a subspace in $\PG(5, q)$.  The line orbits for all $q$ were determined in \cite{lines}, based on the classification of tensors in $\Fq^2\otimes \Fq^3\otimes \Fq^3$ obtained in \cite{canonical}. For $q$ odd, the $K$-orbits of solids correspond to the $K$-orbits of lines in $\PG(5, q)$. The $K$-orbits of solids for $q$ even were classified in \cite{solidsqeven}. Planes in $\PG(5,q)$ intersecting $\cV(\Fq)$ non-trivially were classified in \cite{nets} for $q$ odd and \cite{planesqeven} for $q$ even. As a consequence, these results led to the classifications of webs, pencils and non-empty base nets of conics in $\PG(2,q)$. Note that, rank-1 points of a subspace $W$ in $\PG(5,q)$ are the images under the Veronese map of the base points of its associated linear system \cite[Lemma 4.1]{planesqeven}. Therefore, planes in $\PG(5, q)$ disjoint from $\cV(\Fq )$ correspond to nets of
conics in $\PG(2, q)$ with empty bases. For $q$ odd, nets of
conics with non-empty bases correspond to nets of conics containing at least one double line (rank-one nets). \\

Hyperplanes of $\PG(5, q)$ are associated with conics in $\PG(2, q)$ via the map $\delta$ as detailed in (\ref{eqn:delta}). Specifically, there are four $K$-orbits of hyperplanes, defined as follows: $\cH_1$ corresponds to the $\PGL(3, q)$-orbit of double lines in $\PG(2, q)$,  
$\cH_{2,r}$ corresponds to the $\PGL(3, q)$-orbit of pairs of real lines, $\cH_{2,i}$ corresponds to the $\PGL(3, q)$-orbit of pairs of conjugate imaginary lines, and $\cH_3$ corresponds to the $\PGL(3, q)$-orbit of non-singular conics. 
Alternatively, these $K$-orbits of hyperplanes can be described based on their intersections with $\cV(\mathbb{F}_q)$:  $\cH_1$ consists of hyperplanes intersecting $\cV(\mathbb{F}_q)$ in a conic, $\cH_{2,r}$ consists of hyperplanes intersecting $\cV(\mathbb{F}_q)$ in two conics, $\cH_{2,i}$ consists of hyperplanes intersecting $\cV(\mathbb{F}_q)$ in a single point, and $\cH_3$ consists of hyperplanes intersecting $\cV(\mathbb{F}_q)$ in a normal rational curve of degree $4$. The shorthand notation $OD_4(W) = [h_1, h_{2,r}, h_{2,i}, h_3]$ is often used to represent the hyperplane-orbit distribution of a subspace $W$ in $\PG(5, q)$. Here, $h_1 = h_1(W)$ denotes the number of hyperplanes in $\cH_1$ that are incident with $W$, and similarly for the other components. The invariant $OD_4(W)$ is also referred to as the {\it conic distribution} of the linear system of conics associated with $W$.

\subsection{Geometric interpretation of linear  rank-distance codes}\label{geometricInt}

The set of rank-one matrices in $M_{n\times n}(\Fq)$ coincides with the set of points of the {\it Segre variety } $\mathcal{S}_{n,n}(\Fq)$ in $\PG(n^2- 1, q)$, which can be defined as the image of the {\it Segre embedding}:

$$
\sigma_{n,n}: \PG(n-1,q) \times \PG(n-1,q) \longrightarrow \PG(n^2-1,q) 
$$
\[
\quad ((x_0, \dots, x_{n-1}), (y_0, \dots, y_{n-1})) \longmapsto 
\left( x_0y_0, x_0y_1, \dots, x_iy_j, \dots, x_{n-1}y_{n-1} \right).
\]

 The {\it  linear equivalence} of the $\Fq$-linear rank-distance codes in $M_{n\times n}(\Fq)$, under the action of $\GL(n,q)\times \GL(n,q)$ on $M_{n\times n}(\Fq)$: $(A,B): X\mapsto AXB$, corresponds to the equivalence of the subspaces of $\PG(n^2- 1, q)$ under the action of the induced group $G$, as a subgroup of the projectivity group $\PGL(n^2,q)$, that stabilises the Segre variety $\mathcal{S}_{n,n}(\Fq)$ in $\PGL(n^2, q)$. 
 \begin{Remark}
   Note that there are two ways of slightly extending the above action of the group $G$ to more general notions of equivalence. One (called ``semi-linear equivalence") is obtained by extending this action of the group $G$ by automorphisms of $\Fq$, i.e. $X\mapsto AX^\tau B$, $\tau \in {\mathrm{Aut}}(\Fq)$. However, it is readily verified that this does not alter our results. The other extension is obtained by also considering the transpose operation, i.e. $X\mapsto X^T$, but since we are interested in $SRD$ codes, this action is not a faithful action, and can be discarded.  
 \end{Remark}

 For $1\leq i< n$, let $\mathcal{S}^{(i)}_{n,n}(\Fq)$ denote the $i$-th secant variety of the Segre variety consisting of points in $\PG(n^2 - 1, q)$ which correspond to matrices of rank at most $i$. Clearly, \begin{eqnarray*}
     \mathcal{S}^{(1)}_{n,n}(\Fq)=\mathcal{S}_{n,n}(\Fq),~ \mathcal{S}^{(i)}_{n,n}(\Fq)\subseteq \mathcal{S}^{(i+1)}_{n,n}(\Fq), \mbox{ and }~~\mathcal{S}^{(n)}_{n,n}(\Fq)=\PG(n^2-1,q).
\end{eqnarray*}
An $\Fq$-linear MRD code in $M_{n\times n}(\Fq)$ of minimum distance $d $ corresponds to an $(n(n-d+1)-1)$-dimensional subspace of $\PG(n^2 - 1, q)$ disjoint from the $(d-1)$-st secant variety of the Segre variety $\mathcal{S}^{(d-1)}_{n,n}(\Fq)$, i.e., {\it a subspace of minimum rank $d$}. For SRD codes, we can view an $\Fq$-linear MSRD code in $M_{n\times n}(\Fq)$ of minimum distance $d $ as a subspace of dimension determined by (\ref{eqn:dim_bound}), disjoint from the $(d-1)$-st secant variety of the {\it Veronese variety} $\cV_{n}(\Fq)$ in $\PG(N-1,q)$, which is defined as the intersection of the Segre variety with the subspace $\PG(N-1,q)$ of $\PG(n^2-1,q)$ corresponding to symmetric matrices. Note that for the $\mathbb{F}_q$-linear SRD codes in $M_{n \times n}(\mathbb{F}_q)$, we can define another type of equivalence, called \textit{symmetric equivalence}. Two codes are said to be {\it symmetrically equivalent} if their associated subspaces in $\mathrm{PG}( N - 1, q )$ are equivalent under the action of the subgroup of $G$ stabilizing the Veronese variety $\mathcal{V}_{n}(\mathbb{F}_q)$. The {\it rank distribution} of an $\Fq$-linear rank distance code $C$ in $M_{n\times n}(\Fq)$ is an $(n+1)$-tuple $(r_0,r_1,\cdots,r_n)$ where $r_i$ is the number of matrices in $C$ of rank $i$. Clearly, the rank distribution of an $\Fq$-linear SRD code in $M_{3\times 3}(\Fq)$ is completely determined by the rank distribution of its associated subspace in $\PG(5,q)$.

\section{$\Fq$-linear CSRD codes in $M_{3\times 3}(\Fq)$}\label{mainsec}

The main goal of this section is to classify $\Fq$-linear CSRD codes in $M_{3\times 3}(\Fq)$ up to equivalence. While every $\Fq$-linear MSRD code in $M_{3\times 3}(\Fq)$ is complete, we show that the converse does not hold when $d=2$ and $q$ is even. This leads to concluding that all $\Fq$-linear SRD codes in $M_{3\times 3}(\Fq)$ are extendable to MSRD codes when $q$ is odd. However, for $q$ even, the only $\Fq$-linear SRD codes in $M_{3\times 3}(\Fq)$ that cannot extend to an MSRD code are the $3$-dimensional CSRD codes with $d=2$. We recall that a $k$-dimensional SRD code in $M_{3\times 3}(\Fq)$ of minimum distance $d$ can be viewed as a $(k-1)$-dimensional subspace of $\PG(5,q)$ that is disjoint from $\cV^{(d-1)}(\Fq)$, i.e., of minimum rank $d$. Note that $\cV^{(1)}(\Fq)=\cV(\Fq)$ and  $\cV^{(0)}(\Fq)=\emptyset$. The cases $d=2$ and $d=3$ are addressed separately. The results in this section are initially proved   geometrically and then interpreted in their equivalent coding-theoretic form.

\subsection{Minimum distance $d=2$}\label{Subd=2}

Geometrically, an $\Fq$-linear CSRD code in $M_{3\times 3}(\Fq)$ of minimum distance 2 is a subspace of $\PG(5, q)$ of minimum rank $2$ that is maximal for this property. In particular, an $\Fq$-linear MSRD code in $M_{3\times 3}(\Fq)$ of minimum distance $2$ is a solid of $\PG(5,q)$ of minimum rank $2$. To study $\Fq$-linear CSRD codes in $M_{3\times 3}(\Fq)$ of minimum distance 2 that are not maximum, it suffices to examine lines and planes in $\PG(5,q)$ of minimum rank $2$ that are not contained in a larger subspace of minimum rank $2$. We treat these cases separately, leading to the classification of $\Fq$-linear CSRD codes in $M_{3\times 3}(\Fq)$ of minimum distance $2$.

\subsubsection{3-dimensional SRD codes}
We start by investigating the $3$-dimensional CSRD codes with $d=2$ in $M_{3 \times 3}(\Fq)$. Such codes correspond to planes of $\PG(5,q)$ of minimum rank $2$ which are not contained in a solid of minimum rank 2.

\begin{Remark}
The solids in $\PG(5, q)$ of minimum rank $2$ are the solids in $\Omega_{8,2} \cup \Omega_{14,2} \cup \Omega_{15,2}$ for $q$ odd, and in $\Omega_{7} \cup \Omega_{13} \cup \Omega_{14}$ for $q$ even. This follows from the point-orbit distributions of the $K$-orbits of solids in $\PG(5,q)$ as presented in Tables \ref{solidseven} and \ref{solidsodd}, based on \cite{lines,solidsqeven,webs}.
\end{Remark}

\begin{Theorem}\label{maximalplanesqodd}
    For $q$ odd, a plane in $\PG(5,q)$ of minimum rank $2$ is contained in a solid in $\Omega_{8,2}\cup\Omega_{14,2}\cup\Omega_{15,2}$. 

\end{Theorem}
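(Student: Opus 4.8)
The plan is to show that any plane $\pi$ of minimum rank $2$ can be enlarged to a solid that is still disjoint from $\cV(\Fq)$; once such a solid of minimum rank $2$ is produced, the Remark preceding the statement identifies its $K$-orbit as one of $\Omega_{8,2},\Omega_{14,2},\Omega_{15,2}$, and the theorem follows immediately. Recall that $\pi$ having minimum rank $2$ means $\pi\cap\cV(\Fq)=\emptyset$ while $\pi$ contains at least one rank-$2$ point $R_0$. The enlargement will be obtained by projecting from $\pi$ and using this rank-$2$ point to guarantee that there is room for a new generator.

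First I would record that, for $q$ odd, every rank-$2$ point $R_0$ lies on a genuine secant of $\cV(\Fq)$. Since symmetric bilinear forms diagonalise in odd characteristic, the matrix $M_{R_0}$ can be written as $\lambda_1 v_1 v_1^T+\lambda_2 v_2 v_2^T$ with $\lambda_1,\lambda_2$ nonzero and $v_1,v_2$ linearly independent; as $M_{\nu(v)}=v v^T$, this exhibits $R_0$ on the secant $\langle P,P'\rangle$ joining the distinct points $P=\nu(v_1)$ and $P'=\nu(v_2)$ of $\cV(\Fq)$. (Equivalently, one may invoke the fact that, for $q$ odd, both the exterior and the interior points of a conic inside its conic plane lie on secants.) In particular this secant $\langle P,P'\rangle$ meets $\pi$ in the point $R_0$.

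Next I would consider the projection $\rho$ from $\pi$ onto the quotient $\PG(5,q)/\pi\cong\PG(2,q)$, a plane with $q^2+q+1$ points. Because the secant $\langle P,P'\rangle$ passes through $R_0\in\pi$, the points $P,P',R_0$ are collinear, so $\langle\pi,P\rangle=\langle\pi,P'\rangle$ and hence $\rho(P)=\rho(P')$. Thus $\rho$ fails to be injective on $\cV(\Fq)$, and the image $\rho(\cV(\Fq))$ consists of at most $q^2+q$ points of the quotient plane. Consequently there exists a point $\bar R\in\PG(5,q)/\pi$ not lying in $\rho(\cV(\Fq))$; lifting it to any $R\in\PG(5,q)\setminus\pi$ with $\rho(R)=\bar R$, I set $\Sigma=\langle\pi,R\rangle$.

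Finally I would verify that $\Sigma$ has minimum rank $2$. Any $Q\in\Sigma\cap\cV(\Fq)$ satisfies either $Q\in\pi$, contradicting $\pi\cap\cV(\Fq)=\emptyset$, or $Q\notin\pi$, in which case $\rho(Q)=\rho(R)=\bar R\in\rho(\cV(\Fq))$, contradicting the choice of $\bar R$; hence $\Sigma\cap\cV(\Fq)=\emptyset$, so $\Sigma$ contains no rank-$1$ point while still containing the rank-$2$ point $R_0$. Therefore $\Sigma$ is a solid of minimum rank $2$, and by the Remark it belongs to $\Omega_{8,2}\cup\Omega_{14,2}\cup\Omega_{15,2}$. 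The one delicate point, and the step I would scrutinise most, is the guarantee that $\rho(\cV(\Fq))$ does not fill the entire quotient plane — this is precisely what the existence of a rank-$2$ point (and hence of a secant of $\cV(\Fq)$ through $\pi$) provides, and it is where the hypothesis ``minimum rank $2$'' is genuinely used rather than merely ``disjoint from $\cV(\Fq)$''.
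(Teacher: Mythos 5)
Your proof is correct and follows essentially the same route as the paper: both arguments rest on the pigeonhole count that the $q^2+q+1$ solids through $\pi$ cannot all be distinct spans $\langle\pi,Q\rangle$ with $Q\in\cV(\Fq)$, because a secant of $\cV(\Fq)$ through a rank-$2$ point of $\pi$ (which exists since $q$ is odd) forces two such spans to coincide, leaving a solid through $\pi$ disjoint from $\cV(\Fq)$. The only cosmetic difference is that you exhibit the secant by diagonalising $M_{R_0}$, whereas the paper takes a secant of the conic $\cC(P)$ through $P$.
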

\begin{proof}
Let $\pi$ be a plane in $\PG(5,q)$ of minimum rank $2$ and $P\in \pi\cap \cV^{(2)}(\Fq)$. Since the number of solids through $\pi$ in $\PG(5,q)$ is equal to the number of points on $\cV(\Fq)$, it follows that there must be at least one solid through $\pi$ which does not meet $\cV(\Fq)$, unless all solids of the form $\langle \pi,Q\rangle$, with $Q\in \cV(\Fq)$ are distinct. To see that this is not the case, consider the conic $\cC(P)$ and a secant line $\langle Q,Q'\rangle$ of $\cC(P)$ through $P$. Then $\langle \pi,Q\rangle=\langle \pi,Q'\rangle$.
\end{proof}

\begin{Theorem}\label{nonmaximalplaneseven}
    For $q\geq 4$ even, a plane in $\PG(5,q)$ of minimum rank $2$ with at least one rank-$2$ point outside the nucleus plane $\pi_{\mathcal{N}}$ is contained in a solid in $\Omega_{7}\cup\Omega_{13}\cup\Omega_{14}$.
\end{Theorem}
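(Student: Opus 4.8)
The plan is to adapt the counting argument from the proof of Theorem \ref{maximalplanesqodd} to the even case, using the hypothesis on the rank-$2$ point to supply the secant line that the odd case obtained automatically. Let $\pi$ be a plane of minimum rank $2$ and let $P\in\pi$ be a rank-$2$ point with $P\notin\pi_{\mathcal{N}}$. First I would record the two relevant counts: the number of solids of $\PG(5,q)$ containing the fixed plane $\pi$ equals the number of points of the quotient $\PG(5,q)/\pi\cong\PG(2,q)$, namely $q^2+q+1$, which coincides with $|\cV(\Fq)|=q^2+q+1$. Since $\pi\cap\cV(\Fq)=\emptyset$, any solid through $\pi$ meeting $\cV(\Fq)$ in a point $Q$ must equal $\langle\pi,Q\rangle$; hence the solids through $\pi$ that meet $\cV(\Fq)$ are precisely the image of the map $\mu:\cV(\Fq)\to\{\text{solids through }\pi\}$, $Q\mapsto\langle\pi,Q\rangle$.

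The crux is to show that $\mu$ is not injective, since non-injectivity forces its image to have fewer than $q^2+q+1$ elements, so that some solid $\Sigma\supseteq\pi$ is disjoint from $\cV(\Fq)$. This is where the hypothesis $P\notin\pi_{\mathcal{N}}$ enters. As $P$ has rank $2$ and lies outside the nucleus plane, $P\in\mathcal{P}_{2,s}$, so $P$ lies in its unique conic plane $\langle\cC(P)\rangle$ and is not the nucleus of $\cC(P)$. For $q$ even, a point of a conic plane lying neither on the conic nor at its nucleus lies on exactly $q/2$ secant lines of that conic; since $q\geq 4$ this number is positive, so I may choose two distinct points $Q,Q'\in\cC(P)\subseteq\cV(\Fq)$ with $P\in\langle Q,Q'\rangle$. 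Because $P\in\pi$, both $Q$ and $Q'$ lie in $\langle\pi,Q\rangle$, whence $\langle\pi,Q\rangle=\langle\pi,Q'\rangle$ with $Q\neq Q'$, exhibiting the required collision.

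Finally I would conclude as follows. The solid $\Sigma\supseteq\pi$ produced above is disjoint from $\cV(\Fq)$, and since $\pi$ already contains the rank-$2$ point $P$, the minimum rank of $\Sigma$ is exactly $2$. By the Remark preceding Theorem \ref{maximalplanesqodd}, the solids of $\PG(5,q)$ of minimum rank $2$ for $q$ even are exactly those in $\Omega_{7}\cup\Omega_{13}\cup\Omega_{14}$, so $\Sigma$ lies in this union, as claimed.

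The step I expect to be the main obstacle is the secant count for a non-nucleus point in characteristic two: it is precisely the feature that separates this theorem from the odd case, since a nucleus (a point of $\pi_{\mathcal{N}}$) lies on \emph{no} secant of its conic at all, every line through it being tangent. Verifying that a non-nucleus point off the conic carries $q/2$ secants, and hence at least one once $q\geq 4$, is what licenses the whole argument and explains exactly why the hypothesis must exclude the rank-$2$ points of $\pi_{\mathcal{N}}$.
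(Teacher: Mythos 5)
Your proof is correct and follows essentially the same route as the paper's: the paper proves this theorem by invoking the counting argument of Theorem \ref{maximalplanesqodd} verbatim and noting that the hypothesis on a rank-$2$ point outside $\pi_{\mathcal{N}}$ guarantees a secant line of $\cC(P)$ through $P$. Your only addition is to make explicit the count (exactly $q/2$ secants of a conic through a point of its plane that is neither on the conic nor its nucleus, $q$ even), which the paper leaves implicit; this detail is accurate and correctly identifies why the nucleus-plane points must be excluded.
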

\begin{proof}
   A similar proof to that of Theorem \ref{maximalplanesqodd} applies here. Note that the existence of a secant line through $ P $ is guaranteed, as  $ \pi $ contains at least one rank-2 point outside $ \pi_{\mathcal{N}} $. 
    \end{proof}

Next, we turn our attention to planes of minimum rank two in $\PG(5,q)$, $q$ even, which have no points of rank two outside of the nuclues plane.
We start by classifying planes in $\PG(5,q)$, $q$ even, which intersect the nucleus plane in a line, and which have no other points of rank $\leq 2$. The following theorem establishes the existence of a unique $K$-orbit of such planes, and determines its hyperplane-orbit distribution. 

\begin{Remark}\label{naming}
The orbits in Theorems \ref{[0,q+1,0,q^2]} and \ref{[0,1,0,q^2+q]} are named to align with the naming convention in \cite{planesqeven, netswith//qeven}, where $\Sigma_1,\dots,\Sigma_{15}$ represent the $K$-orbits of planes in $\PG(5,q)$ intersecting $\cV(\Fq)$ non-trivially for $q$ even. The orbits $\Sigma_{16}$ and $\Sigma_{17}$ in \cite{netswith//qeven} denote the $K$-orbits of planes in $\PG(5,q)$ that intersect $\pi_{\mathcal{N}}$ in a line, while the orbits $\Sigma_{18}, \dots, \Sigma_{23}$ denote the $K$-orbits of planes in $\PG(5,q)$ that intersect $\pi_{\mathcal{N}}$ in a point.

\end{Remark}

\begin{Theorem}\label{[0,q+1,0,q^2]}
    For $q\geq 4$ even, planes in $\PG(5,q)$ with point-orbit distribution $[0,q+1,0,q^2]$ form a unique $K$-orbit, namely $\Sigma_{16}$, with hyperplane-orbit distribution $[q+1,0,0,q^2]$.
    \end{Theorem}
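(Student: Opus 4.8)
The plan is to exploit the rigidity forced by the point-orbit distribution to reduce the problem to planes through a single canonical line of $\pi_{\mathcal{N}}$, and then to read off the conic distribution directly via the map $\delta$.

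First I would pin down the intersection $\pi\cap\pi_{\mathcal{N}}$. Every point of $\pi_{\mathcal{N}}$ has rank $2$, and the hypothesis $OD_0(\pi)=[0,q+1,0,q^2]$ gives $r_{2,s}(\pi)=0$, so all $q+1$ rank-$2$ points of $\pi$ already lie in $\pi_{\mathcal{N}}$. Hence $\pi\cap\pi_{\mathcal{N}}$ is a subspace all of whose points have rank $2$ and which contains exactly $q+1$ of them, forcing it to be a line $\ell$ (it cannot equal $\pi_{\mathcal{N}}$, since $\pi$ carries $q^2$ rank-$3$ points). Next I would use that $K$ stabilises $\pi_{\mathcal{N}}=\cZ(Y_0,Y_3,Y_5)$ and induces on it the contragredient action of $\PGL(3,q)$ — the action $M_P\mapsto AM_PA^T$ restricted to the zero-diagonal (alternating) $3\times 3$ matrices is the $\wedge^2$-action — which is transitive on the lines of $\pi_{\mathcal{N}}$. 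This lets me assume $\ell$ is the standard line consisting of the matrices supported on the entries $(1,2),(2,1)$ and $(2,3),(3,2)$.

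With $\ell$ fixed, I would write $\pi=\langle\ell,R\rangle$, absorb the $(1,2)$- and $(2,3)$-entries of $M_R$ into $\ell$, and impose that every point off $\ell$ has rank $3$. Scaling such a point to $M=\left[\begin{smallmatrix} a&\mu&c\\ \mu&b&\rho\\ c&\rho&d\end{smallmatrix}\right]$, the characteristic-$2$ cancellation of the term $2c\mu\rho$ gives $\det M=d\mu^2+a\rho^2+b(ad+c^2)$; since Frobenius is bijective, this is nonzero for all $\mu,\rho$ exactly when $a=d=0$ and $b,c\in\Fq^*$. Normalising $c=1$ leaves the family $M_R=\left[\begin{smallmatrix}0&0&1\\0&b&0\\1&0&0\end{smallmatrix}\right]$, $b\in\Fq^*$, of $q-1$ planes through $\ell$; the diagonal elements $\mathrm{diag}(\alpha,1,1)\in K_\ell$ send $b\mapsto b/\alpha$ and so act transitively on them. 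This proves there is a unique $K$-orbit, namely $\Sigma_{16}$; its representative has general point $\left[\begin{smallmatrix}0&\mu&\lambda\\ \mu&\lambda&\rho\\ \lambda&\rho&0\end{smallmatrix}\right]$ with $\det=\lambda^3$, confirming $OD_0=[0,q+1,0,q^2]$.

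Finally, for the hyperplane distribution I would use $\pi=\cZ(Y_0,Y_5,Y_2+Y_3)$: the hyperplanes through $\pi$ are $\cZ\bigl(sY_0+u(Y_2+Y_3)+tY_5\bigr)$ with $(s:u:t)\in\PG(2,q)$, corresponding under $\delta^{-1}$ to the conics $Q=sX_0^2+uX_1^2+uX_0X_2+tX_2^2$. For $u=0$ one has $Q=(\sqrt sX_0+\sqrt tX_2)^2$, a double line, giving the $q+1$ members of $\cH_1$; for $u\neq0$ a direct factorisation attempt forces the $X_0X_2$-coefficient to vanish — a contradiction — so $Q$ is absolutely irreducible, hence a non-singular conic, giving $q^2$ members of $\cH_3$ and none in $\cH_{2,r}\cup\cH_{2,i}$. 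Thus $OD_4(\pi)=[q+1,0,0,q^2]$. I expect the main obstacle to be the reduction step: correctly identifying the induced action of $K$ on $\pi_{\mathcal{N}}$ and its transitivity on lines, and then controlling the residual stabiliser $K_\ell$ well enough to collapse the $q-1$ candidate planes into a single orbit; the characteristic-$2$ determinant bookkeeping, while routine, is precisely what makes this collapse possible.
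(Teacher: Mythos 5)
Your proposal is correct and follows essentially the same route as the paper's proof: reduce (via transitivity of $K$ on lines of $\pi_{\mathcal{N}}$) to planes through the representative line $\ell_{12,1}$, impose the rank-$3$ condition via the characteristic-$2$ determinant to isolate the one-parameter family $b\in\Fq^*$, collapse it to a single orbit using diagonal elements of the line stabiliser, and read off $OD_4=[q+1,0,0,q^2]$ from the associated net $\langle X_0^2,\,X_0X_2+X_1^2,\,X_2^2\rangle$ via $\delta$. If anything, your version is slightly more self-contained, since you justify the transitivity on lines of $\pi_{\mathcal{N}}$ (via the $\wedge^2$/contragredient action) and allow a nonzero $(1,1)$-entry in the parametrisation, ruling it out by the rank condition rather than assuming the plane lies in $\cZ(Y_0)$ as the paper implicitly does.
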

    
    \begin{proof}

 A plane of $\PG(5,q)$  intersecting $ \pi_{\mathcal{N}}$ in a line has the form 
 $$\pi_{a,b,c}=\langle \ell_{12,1},P(0,0,a,b,0,c)\rangle=\begin{bmatrix}\cdot&x&az\\x&bz&y\\ az&y&cz\end{bmatrix},$$ 
 where $(a,b,c)\in \Fq^3\setminus \{(0,0,0)\}$, and
 $\ell_{12,1}$ is the representative of the $K$-orbit $o_{12,1}$ from Table \ref{tableoflineseven}.  For $c \neq 0$, we have $\pi_{0,b,c} \cap \cV(\Fq) \neq \emptyset$ 
 (particularly $\pi_{0,b,c}\in \Sigma_8$ from \cite[Table 1]{planesqeven}) and $\pi_{1,b,c}$ intersects $\cV^{(2)}(\Fq)$ in $\ell_{12,1}$ and $q$ rank-$2$ points outside $\pi_{\mathcal{N}}$. For $c = 0$, $\pi_{1,0,0} = \pi_{\mathcal{N}}$, and $\pi_{0,1,0} \cap \cV(\Fq) \neq \emptyset$  (particularly $\pi_{0,1,0}\in \Sigma_7$ from \cite[Table 1]{planesqeven}). The plane $\pi_{1,b,0}$ intersects $\cV^{(2)}(\Fq)$ in $\ell_{12,1}$ for all $b \neq 0$, and it is $K$-equivalent to $\pi_{1,1,0}$ for all $b \neq 0$. This equivalence can be seen by defining $$X=\begin{bmatrix}b^{-1}&0&0\\0&b^{-1}&0\\ 0&0&1\end{bmatrix}$$
 and observing that $$X\pi_{1,b,0}X^T=\begin{bmatrix}\cdot&b^{-2}x&b^{-1}z\\b^{-2}x&b^{-1}z&b^{-1}y\\ b^{-1}z&b^{-1}y&\cdot\end{bmatrix}.$$
 This shows that planes of $\PG(5,q)$ intersecting $\cV^{(2)}(\Fq)$ in a line in $\pi_{\mathcal{N}}$ define a unique $K$-orbit with point-orbit distribution $[0, q+1, 0, q^2]$. The associated net of conics with $\pi_{1,1,0}$ is defined by $\mathcal{N}(\pi_{1,1,0})=(X_0^2,X_0X_2+X_1^2,X_2^2)$. The singular conics of $\mathcal{N}(\pi_{1,1,0})$ are the zero locus of $$aX_0^2+b(X_0X_2+X_1^2)+cX_2^2$$ with $b=0$. Thus, $\mathcal{N}(\pi_{1,1,0})$ comprises $q+1$ double lines and $q^2$ non-singular conics.  \end{proof}

Our next objective is to classify planes in $\PG(5,q)$, $q$ even, which intersect the nucleus plane in a point, and which have no other points of rank $\leq 2$. The classification will be obtained in Theorem \ref{[0,1,0,q^2+q]}. The proof is more complicated than in the previous case, and we first need to prove some auxiliary results.

\begin{Lemma}\label{rank2pointino161}
The subgroup in $K$ stabilizing a rank-$2$ point in $\pi_{\mathcal{N}}$ is $E_q^2: \mathrm{GL}(2,q)$, where $E_q$ is the elementary abelian group of order $q$.
\end{Lemma}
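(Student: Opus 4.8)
The plan is to compute the stabiliser directly in matrix form and then read off its group structure. Since $\mathcal{P}_{2,n}$ is a single $K$-orbit (all $q^2+q+1$ points of $\pi_{\mathcal{N}}$), the stabilisers of its points are conjugate, so it suffices to fix one convenient representative. I would take $P=(0,0,0,0,1,0)$, whose matrix is $M_P=\begin{bmatrix}0&0&0\\0&0&1\\0&1&0\end{bmatrix}$, i.e. the alternating form $\begin{bmatrix}0&\mathbf 0\\ \mathbf 0& J\end{bmatrix}$ with $J=\begin{bmatrix}0&1\\1&0\end{bmatrix}$ and radical $\langle e_1\rangle$. By the definition of $K$, an element $\alpha(\phi_A)$ fixes $P$ exactly when $A M_P A^T=\lambda M_P$ for some $\lambda\in\Fq^\times$.

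First I would pin down the shape of $A$. Since $\lambda M_P$ has the same radical as $M_P$, the relation gives $M_P v=0 \iff M_P A^T v=0$, so $A^T$ preserves $\mathrm{rad}(M_P)=\langle e_1\rangle$; equivalently the first row of $A$ is $(a,0,0)$ with $a\neq 0$, so $A$ stabilises the plane $\langle e_2,e_3\rangle$ and has block form $A=\begin{bmatrix}a&0\\ \mathbf b& C\end{bmatrix}$ with $\mathbf b\in\Fq^2$ a column and $C\in\GL(2,q)$. Getting this handedness right --- it is $A^T$, not $A$, that fixes the radical --- is the one place a careless computation goes astray. A short block multiplication then gives $A M_P A^T=\begin{bmatrix}0&0\\0&C J C^T\end{bmatrix}$, with no contribution from $\mathbf b$.

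The key simplification is the characteristic-two identity $C J C^T=(\det C)\,J$, valid for every $C\in\GL(2,q)$; this is precisely where evenness of $q$ enters, matching the fact that $\pi_{\mathcal N}$ only exists for $q$ even. Consequently the stabiliser condition holds for all $C\in\GL(2,q)$ with $\lambda=\det C$, for every $\mathbf b\in\Fq^2$, and for every $a\in\Fq^\times$. Thus the full preimage in $\GL(3,q)$ is the maximal parabolic $\tilde S=\Big\{\begin{bmatrix}a&0\\ \mathbf b& C\end{bmatrix}\Big\}=U\rtimes L$, where $U=\Big\{\begin{bmatrix}1&0\\ \mathbf b& I_2\end{bmatrix}\Big\}\cong E_q^2$ is the (elementary abelian) unipotent radical and $L=\Big\{\begin{bmatrix}a&0\\0&C\end{bmatrix}\Big\}\cong\Fq^\times\times\GL(2,q)$ is a Levi complement.

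Finally I would pass to $K=\PGL(3,q)=\GL(3,q)/Z$, $Z=\{\mu I_3\}$. Since $Z\leqslant L$ and $U\cap Z=1$, the image of $\tilde S$ is $U\rtimes(L/Z)$, and the map $(a,C)\mapsto a^{-1}C$ identifies $L/Z$ with $\GL(2,q)$ (its kernel is exactly $\{(a,aI_2)\}=Z$). This yields $\mathrm{Stab}_K(P)\cong E_q^2:\GL(2,q)$, as claimed. As a consistency check, orbit--stabiliser gives $|\mathrm{Stab}_K(P)|=|\PGL(3,q)|/(q^2+q+1)=q^3(q-1)^2(q+1)=q^2\cdot|\GL(2,q)|$, which agrees; geometrically this stabiliser is simply the stabiliser in $\PGL(3,q)$ of the line $\nu^{-1}(\mathcal C(P))$, since the nucleus $P$ determines and is determined by its conic.
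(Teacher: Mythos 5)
Your proof is correct and follows essentially the same route as the paper's: both fix the representative $P=(0,0,0,0,1,0)$, compute the condition $AM_PA^T=\lambda M_P$ to find that the stabiliser in $\GL(3,q)$ consists of matrices with first row $(a,0,0)$ and invertible lower-right $2\times 2$ block, with no restriction on the first column below the diagonal, and then read off the structure $E_q^2:\GL(2,q)$. The paper's proof is terser (it merely states the conditions $g_{12}=g_{13}=0$ and $g_{22}g_{33}+g_{23}g_{32}\neq 0$), while you supply the supporting details --- the radical argument, the characteristic-two identity $CJC^T=(\det C)J$, and the careful passage to the quotient by the centre --- all of which are accurate.
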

\begin{proof}
Without loss of generality, let $P=(0,0,0,0,1,0)\in \pi_{\mathcal{N}}$. An element of $K$ with underlying matrix $g=(g_{ij})\in \mathrm{GL}(3,q)$ stabilises $P$ if and only if  $g_{12}=g_{13}=0$ and $g_{23}g_{32}+g_{22}g_{33}\neq 0$. Therefore, the subgroup $K_P$ stabilizing $P$ in $K$ is $E_q^2: \mathrm{GL}(2,q)$, as there are no restrictions on $g_{21}$ and $g_{31}$.
\end{proof}

The following theorem is needed for the proof of the main result of this section (Theorem \ref{[0,1,0,q^2+q]}), but it is also interesting in its own right.

\begin{Theorem}\label{OnePointInN}
For every plane $\pi$ in $\PG(5,q)$, $q\geq 4$ even, we have $r_{2,n}(\pi)=h_1(\pi)$, i.e., the number of hyperplanes in $\mathcal{H}_1$ containing $\pi$ is equal to $\lvert \pi \cap \pi_{\mathcal{N}} \rvert$. 
\end{Theorem}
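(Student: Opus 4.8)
My plan hinges on a single structural observation: that the hyperplane orbit $\mathcal{H}_1$ is exactly the star of hyperplanes through the nucleus plane $\pi_{\mathcal{N}}$. To establish this I would start from (\ref{eqn:delta}): a double line $\mathcal{Z}(\ell^2)$, with $\ell = c_0 X_0 + c_1 X_1 + c_2 X_2$, is sent by $\delta$ to the hyperplane cut out by the coefficient vector of $\ell^2$. Since $q$ is even the cross terms in $\ell^2 = c_0^2 X_0^2 + c_1^2 X_1^2 + c_2^2 X_2^2$ vanish, so this hyperplane is $\mathcal{Z}(c_0^2 Y_0 + c_1^2 Y_3 + c_2^2 Y_5)$, which plainly contains $\pi_{\mathcal{N}} = \mathcal{Z}(Y_0, Y_3, Y_5)$. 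Letting $(c_0,c_1,c_2)$ range over $\PG(2,q)$ and using that the Frobenius map $c \mapsto c^2$ permutes $\Fq$, I would conclude that $\mathcal{H}_1$ consists of precisely the $q^2+q+1$ hyperplanes of $\PG(5,q)$ containing $\pi_{\mathcal{N}}$.

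Next I would record that every nonzero point of $\pi_{\mathcal{N}}$ has rank exactly $2$ when $q$ is even: a symmetric matrix with zero diagonal has vanishing determinant in characteristic $2$, and it cannot have rank $1$, since a rank-$1$ symmetric matrix $vv^T$ has diagonal entries $v_i^2$ and would hence force $v=0$. This identifies $r_{2,n}(\pi)$ with $\lvert \pi \cap \pi_{\mathcal{N}} \rvert$, reducing the theorem to the equality $h_1(\pi) = \lvert \pi \cap \pi_{\mathcal{N}} \rvert$.

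The final step is a pure duality count. Because every $H \in \mathcal{H}_1$ already contains $\pi_{\mathcal{N}}$, such an $H$ contains $\pi$ if and only if it contains the span $\langle \pi, \pi_{\mathcal{N}} \rangle$; thus $h_1(\pi)$ is the number of hyperplanes of $\PG(5,q)$ through this span. Writing $j$ for the projective dimension of $\pi \cap \pi_{\mathcal{N}}$, the Grassmann formula gives $\dim \langle \pi, \pi_{\mathcal{N}} \rangle = 4 - j$, and by duality the number of hyperplanes through a $(4-j)$-dimensional subspace is $\frac{q^{j+1}-1}{q-1}$, which is exactly the number of points of $\PG(j,q)$, namely $\lvert \pi \cap \pi_{\mathcal{N}}\rvert$. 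I would finish by checking that the cases $j \in \{-1,0,1,2\}$ match the four possible intersection patterns of $\pi$ with $\pi_{\mathcal{N}}$.

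I expect the only genuinely substantive obstacle to be the first paragraph: the identification of $\mathcal{H}_1$ with the star of hyperplanes on $\pi_{\mathcal{N}}$ is what makes the theorem work, and it relies essentially on characteristic $2$ through the collapse of $\ell^2$ to a diagonal form. Once that identification is in hand, the remainder is a routine incidence/duality computation with no hidden subtlety.
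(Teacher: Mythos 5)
Your proof is correct, but it follows a genuinely different route from the paper's. The paper imports from \cite{webs} only the single inclusion that every hyperplane of $\cH_1$ contains $\pi_{\mathcal{N}}$, and then argues case by case on $\lvert \pi \cap \pi_{\mathcal{N}}\rvert$: the empty case by a dimension argument, the line case via the point--hyperplane incidence counts of \cite{webs}, and the hardest case ($\pi$ meeting $\pi_{\mathcal{N}}$ in exactly one point) by first bounding $h_1(\pi)\leq 1$ through a solid-intersection argument and then forcing equality with a global flag/double count over all such planes. You instead establish the stronger, exact structural fact that $\cH_1$ is \emph{precisely} the star of the $q^2+q+1$ hyperplanes through $\pi_{\mathcal{N}}$, by a direct characteristic-$2$ computation with the map $\delta$ of (\ref{eqn:delta}): $\ell^2$ diagonalises in even characteristic and the Frobenius $c\mapsto c^2$ is a bijection of $\Fq$, so the images of the double lines exhaust all hyperplanes $\cZ(aY_0+bY_3+cY_5)$. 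Once this is in hand, all four intersection patterns collapse into one uniform Grassmann/duality count: $h_1(\pi)$ equals the number of hyperplanes through $\langle \pi,\pi_{\mathcal{N}}\rangle$, which is $(q^{j+1}-1)/(q-1)=\lvert\pi\cap\pi_{\mathcal{N}}\rvert$ with $j=\dim(\pi\cap\pi_{\mathcal{N}})$. Your approach is more self-contained (no appeal to the incidence tables of \cite{webs}) and yields the identification of $\cH_1$ with the star on $\pi_{\mathcal{N}}$ as a reusable by-product, whereas the paper's argument gets by with the weaker inclusion and showcases the flag-counting technique it uses elsewhere. A minor remark: your second paragraph, identifying $r_{2,n}(\pi)$ with $\lvert\pi\cap\pi_{\mathcal{N}}\rvert$, re-derives something already built into the paper's setup, since the orbit $\mathcal{P}_{2,n}$ is defined there as consisting of all $q^2+q+1$ points of the nucleus plane.
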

\begin{proof}

First, note that $\pi_{\mathcal{N}}$ is contained in each hyperplane of type $ \cH_1$ (see \cite[Theorem 3.3]{webs}, in particular the last line of part (i) of its proof). Thus, $h_1(\pi_{\mathcal{N}})=q^2+q+1$. Next, if $r_{2,n}(\pi)=0$, then $h_1(\pi)=0$, since otherwise there would be a hyperplane $H \in \cH_1$ containing both $\pi$ and $\pi_{\mathcal{N}}$, implying $|\pi\cap \pi_{\mathcal{N}}|\geq 1$. 

We have two remaining possibilities to consider: either $\pi$ intersects $\pi_{\mathcal{N}}$ in a line or in a point. 

In the former case, we have $r_{2,n}(\pi)=q+1$, and $\pi=\langle \ell, P\rangle$, where $\ell\subset \pi_{\mathcal{N}}$ and $P$ is a rank-$1$ point, a rank-$2$ point outside $\pi_{\mathcal{N}}$, or a rank-$3$ point. By \cite[Table 2]{webs}, $P$ lies on exactly $q+1$ hyperplanes in $\cH_1$, and since $\ell \subset H$ for all $H \in \cH_1$, it follows that $h_1(\pi)=q+1$. 

To prove that $h_1(\pi)=1$ for planes meeting $\pi_{\mathcal{N}}$ in a point $P$, first observe that $h_1(\pi)\leq 1$, since if $\pi$ were contained in two distinct hyperplanes $H_1, H_2 \in \cH_1$, then $\pi$ and $\pi_{\mathcal{N}}$ are both contained in the solid $H_1\cap H_2$, contradicting the fact that $\pi$ meets $\pi_{\mathcal{N}}$ in exactly a point.

Next, we count flags $(\pi, H) $ with $ \pi_{\mathcal{N}}\cap \pi=\{P\}$ and $H \in \cH_1$. If $\alpha$ denotes the number of planes in a hyperplane $H\in \cH_1$ which meet $\pi_{\mathcal{N}}$ in exactly one point (observe that $\alpha$ is independent of the choice of $H\in \cH_1$), and $\beta$ denotes the number of planes in $\PG(5,q)$ which meet $\pi_{\mathcal{N}}$ in exactly one point, then
$$\alpha=\gaussbin{4}{2}
-(q+1)\left ( \gaussbin{3}{1}-1 \right )-1,$$
and
$$\beta=\gaussbin{5}{2}-(q+1)\left(\gaussbin{4}{1}-1\right)-1.$$
If $\pi_1,\ldots,\pi_\beta$ denote the planes in $\PG(5,q)$ which meet $\pi_{\mathcal{N}}$ in a point, then this gives
$$
\sum_{i=1}^\beta h_1(\pi_i)=(q^2+q+1)\alpha.
$$
Since $h_1(\pi_i)\leq 1$, for each $i\in \{1,\ldots,\beta\}$, this implies that $h_1(\pi)=1$, for each $i\in \{1,\ldots,\beta\}$.
\end{proof}

The next lemma gives a geometric property of the planes of $\PG(5,q)$, $q>2$ even, belonging to the $K$-orbit $\Sigma_{11}$, as described in \cite{planesqeven}. Such a plane meets both the Veronese surface and the nucleus plane in exactly one point. We will need this lemma in the proof of Theorem \ref{[0,1,0,q^2+q]}.

\begin{Lemma}\label{Sigma_11}
    If $\pi$ is a plane of $\PG(5,q)$ belonging to the $K$-orbit $\Sigma_{11}$, $q>2$ even, $Q_2=\pi\cap\pi_\cN$, and $H(Q_2)$ is the unique hyperplane of $\cH_1$ containing the conic $\cC(Q_2)$, then $\pi$ is not contained in $H(Q_2)$.
\end{Lemma}
\begin{proof}
    Let $\pi$ be a plane of $\PG(5,q)$ belonging to the $K$-orbit $\Sigma_{11}$, $q>2$ even. Recall that $OD_0(\pi)=[1,1,q-1,q^2]$. If for $\pi$ we take the representative corresponding to the matrix
    $$M_\pi=\begin{bmatrix}x&y&0\\y&z&z\\ 0&z&x+z\end{bmatrix},$$
then $Q_2$ is parametrized by $(x,y,z)=(0,1,0)$ (see \cite{planesqeven}), and the hyperplane $H(Q_2)\in \cH_1$ containing the conic $\cC(Q_2)$ is $H(Q_2)=\cZ(Y_5)$. Clearly $\pi$ is not contained in $H(Q_2)$.
\end{proof}

The following theorem classifies planes in $\PG(5,q)$, $q>2$ even, which intersect the nucleus plane in a point, and which have no other points of rank $\leq 2$, establishing the existence of a unique $K$-orbit of such planes. Moreover, it is shown that this $K$-orbit has hyperplane-orbit distribution $[1,0,0,q^2+q]$. For reasons explained in Remark \ref{naming} this orbit is denoted by $\Sigma_{18}$.

\begin{Theorem}\label{[0,1,0,q^2+q]}
    For $q\geq 4$ even, planes in $\PG(5,q)$ with point-orbit distribution $[0,1,0,q^2+q]$ form a unique $K$-orbit, namely $\Sigma_{18},$ with hyperplane-orbit distribution $[1,0,0,q^2+q]$.
    \end{Theorem}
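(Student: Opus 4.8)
The plan is to classify planes with point-orbit distribution $[0,1,0,q^2+q]$ in two stages: first establish existence of at least one such orbit and pin down a clean representative, then prove that all such planes form a single $K$-orbit, and finally read off the hyperplane-orbit distribution. The existence of the claimed distribution $[1,0,0,q^2+q]$ in the second coordinate is essentially handed to us by Theorem~\ref{OnePointInN}: since $\pi$ meets $\pi_{\mathcal{N}}$ in exactly one point we have $r_{2,n}(\pi)=1$, hence $h_1(\pi)=1$; combined with $r_{2,s}=r_1=0$ this forces the conic distribution to be $[1,0,0,q^2+q]$, because a plane carrying no pair of real or imaginary conic-lines (which would show up as rank-$2$ points outside $\pi_{\mathcal{N}}$ or as rank-$1$ points) can only meet the Veronese-related hyperplanes in the $\cH_1$ and $\cH_3$ orbits, and the totals must sum to $q^2+q+1$.

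The substantive work is the uniqueness of the $K$-orbit. First I would fix the rank-$2$ point $P=\pi\cap\pi_{\mathcal{N}}$; by Lemma~\ref{rank2pointino161} its stabiliser $K_P\cong E_q^2:\mathrm{GL}(2,q)$ acts on the pencil of planes through $P$, and I want to show $K_P$ is transitive on those planes $\pi$ with $P$ as their only rank-$\leq 2$ point. The natural device is the unique hyperplane $H(P)\in\cH_1$ containing the conic $\cC(P)$, guaranteed by $h_1(\pi)=1$. I would argue that $\pi$ cannot lie inside $H(P)$ — this is exactly the content imported from Lemma~\ref{Sigma_11}, which shows that the relevant neighbouring orbit $\Sigma_{11}$ has its plane escaping $H(Q_2)$, so the generic behaviour is non-containment — and that $\pi\cap H(P)$ is therefore a line $\ell$ through $P$ lying in the solid structure associated to $\cC(P)$. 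Classifying the possible lines $\ell$ up to $K_P$, and then showing that the complementary direction (the point of $\pi$ off $H(P)$) is determined up to $K_P$ once we impose that $\pi$ contains no further rank-$\leq 2$ point, reduces the whole problem to a finite orbit count on lines through a fixed rank-$2$ point.

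Concretely, I would take the representative matrix
\[
M_\pi=\begin{bmatrix} x & z & 0\\ z & y & 0\\ 0 & 0 & x\end{bmatrix}
\]
(or a suitable normal form emerging from the $K_P$-reduction), verify by a direct rank computation that the only singular member of the associated net occurs at a single parameter value corresponding to $P$, and confirm $OD_0=[0,1,0,q^2+q]$. The uniqueness then follows by a counting argument in the spirit of Theorem~\ref{[0,q+1,0,q^2]} and Theorem~\ref{OnePointInN}: count flags $(\pi,P)$ with $\pi\cap\pi_{\mathcal{N}}=\{P\}$ and $\pi$ having no other low-rank point, compare with $|K|/|K_{\pi}|$ for the candidate orbit, and show the numbers force a single orbit.

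I expect the main obstacle to be the uniqueness argument rather than existence. The difficulty is that the stabiliser $K_P$ is relatively large and its induced action on lines through $P$ (equivalently on the residual $\PG(3,q)$ or on the conic $\cC(P)$ together with its nucleus) must be analysed carefully to separate the plane-orbits $\Sigma_{18},\dots,\Sigma_{23}$ listed in Remark~\ref{naming}; the orbit $\Sigma_{18}$ is distinguished precisely by having \emph{no} secondary rank-$\leq 2$ point, so the crux is proving that imposing this single geometric condition cuts the $K_P$-orbits down to exactly one. Separating $\Sigma_{18}$ cleanly from its neighbours — especially from $\Sigma_{11}$, where the extra structure of Lemma~\ref{Sigma_11} intervenes — is where the care is needed, and I would lean on the invariants $OD_0$ and $OD_4$ together with the exact transitivity of $K_P$ to close the gap.
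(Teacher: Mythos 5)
Your uniqueness argument rests on a claim that is false, and in fact is the exact opposite of the key geometric fact. You assert that $\pi$ cannot lie inside $H(P)$, citing Lemma~\ref{Sigma_11}, and then propose to analyse the line $\pi\cap H(P)$ and ``the point of $\pi$ off $H(P)$''. But the unique hyperplane of type $\cH_1$ containing $\pi$ (guaranteed by Theorem~\ref{OnePointInN}) is necessarily $H(P)$ itself: that hyperplane $H_1$ contains both $\pi$ and its own conic plane $\langle \cC_1\rangle$; two planes inside a $4$-space must meet; all points of a conic plane have rank at most $2$, so the intersection is $\{P\}$; and since a rank-$2$ point lies in a unique conic plane, $\cC_1=\cC(P)$ and $H_1=H(P)$. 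Hence $\pi\subset H(P)$, and your whole reduction collapses. Lemma~\ref{Sigma_11} does not say what you want: it concerns planes of type $\Sigma_{11}$ (which meet $\cV(\Fq)$ in a point), and its correct role is the reverse of yours --- it is used to show that a plane $\langle P,\ell_{17}\rangle$ spanned by $P$ and a constant-rank-$3$ line $\ell_{17}\subset H(P)$ cannot be of type $\Sigma_{11}$ (and hence is disjoint from $\cV(\Fq)$), precisely because such a plane \emph{is} contained in $H(P)$ while $\Sigma_{11}$-planes are not. The correct uniqueness argument then proceeds inside $H(P)$: every plane with the given distribution equals $\langle P,\ell_{17}\rangle$ for a line $\ell_{17}$ of type $o_{17}$ in $H(P)$, and $K_P=K_{H(P)}$ is transitive on those lines by an orbit count ($|K_P|/|K_{\ell_{17}}|=\tfrac{1}{3}q^3(q-1)(q^2-1)$, which equals the number of $o_{17}$-lines in $H(P)$). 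Your concrete representative is also wrong: the plane whose matrix has rows $(x,z,0)$, $(z,y,0)$, $(0,0,x)$ contains the rank-one point at $(x,y,z)=(0,1,0)$, so it meets $\cV(\Fq)$ and cannot have point-orbit distribution $[0,1,0,q^2+q]$.

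Two further steps are asserted rather than proved. In your computation of $OD_4(\pi)$, the parenthetical claim that a line-pair conic in the net ``would show up as rank-$2$ points outside $\pi_{\cN}$ or as rank-$1$ points'' is exactly what must be shown, and it is not even the right dichotomy: if $\pi\subset H\in\cH_{2,r}$, then $\pi$ meets each of the two conic planes of $H$, and a priori it could do so at a nucleus, i.e.\ at a rank-$2$ point \emph{inside} $\pi_\cN$; the actual contradiction is that $P$ would then lie in two distinct conic planes (equivalently, be the nucleus of two distinct conics), which is impossible, and the case $\cH_{2,i}$ needs a separate argument over $\bF_{q^2}$ using the two conjugate conic planes $\rho$ and $\rho^q$. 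Finally, your closing proposal to ``count flags $(\pi,P)$ \dots and compare with $|K|/|K_\pi|$'' is circular: to carry it out you would need to know in advance how many planes have the prescribed distribution, which is essentially the statement being proved.
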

       
   \begin{proof}
     
     Suppose $\pi$ is such a plane, and  $P=\pi\cap \pi_{\mathcal{N}}$. Then, by \cite{lines}, all lines in $\pi$ through $P$ belong to $o_{16,1}$, and the remaining $q^2$ lines belong to $o_{17}$. Also, if $K_{\pi}$ and  $K_P$ denote the stabilisers in $K$ of $\pi$ and $P$, then, $K_{\pi}\subset K_P=E_q^2: \mathrm{GL}(2,q)$ (see Lemma \ref{rank2pointino161}).\\
     
   (i) First we prove that $OD_4(\pi)=[1,0,0,q^2+q]$.

   By Theorem \ref{OnePointInN}, $\pi$ is contained in a unique hyperplane in $\cH_1$. If $\pi \subset H$ for some $H \in \cH_{2,r}$ where  $H\cap \cV(\Fq)=\mathcal{C}_1\cup \mathcal{C}_2$, $\pi_1=\langle \mathcal{C}_1\rangle $ and $\pi_2=\langle \mathcal{C}_2\rangle$, then $\pi \cap \pi_1 \cap \pi_2=\{P\}$. Thus,  $\pi_1=\pi_2$, a contradiction. This shows that $h_{2,r}(\pi)=0$.
     
     Next, suppose that $\pi \subset H$ for some $H \in \cH_{2,i}$ with $H\cap \cV(\Fq)=\{R\}$. Then $H(\bF_{q^2})$ meets $\cV(\mathbb{F}_{q^2})$ in two conjugate conics $\cC$ and $\cC^q$ over the quadratic extension $\bF_{q^2}$ of $\Fq$.
    Let $\rho=\langle \cC \rangle$. Then $\rho\cap \rho^q=\{R\}$, and the only points of rank two in $H(\bF_{q^2})$ are contained in $\rho\cup \rho^q$. Hence $P\in \rho \cup \rho^q$. Since $P^q=P$, this implies that $P=\rho \cap \rho^q$, contradicting $\rho\cap \rho^q=\{R\}$.
     
     Therefore, $\pi$ is not contained in a hyperplane in $\mathcal{H}_{2,r}$, and  the hyperplane-orbit distribution of $\pi$ is $[1,0,0,q^2+q]$, as required.\\
     
     (ii) Next we show that there is at most one $K$-orbit consisting of planes with point-orbit distribution $[0,1,0,q^2+q]$. 
     
     Again, suppose $\pi$ is such a plane and  $P=\pi\cap \pi_{\mathcal{N}}$. By Theorem \ref{OnePointInN}, we know that $\pi$ is contained in a unique hyperplane $H_1 \in \cH_1$. Let $\mathcal{C} = H_1 \cap \cV(\Fq)$. Then $H_1$ contains both $\pi$ and the conic plane $\langle \mathcal{C} \rangle$, which must intersect in at least one point. Since $\pi \cap \cV^{(2)}(\Fq) = \{P\}$, it follows that $\mathcal{C} = \mathcal{C}(P)$. Thus, $H_1 = H(P)$, where $H(P) \cap \cV(\Fq) = \mathcal{C}(P)$.
Consider the stabiliser $K_P$ of $P$ in $K$. Note that $K_P=K_{H(P)}$. Since a line $\ell_{17}$ of type $o_{17}$ is contained in a unique hyperplane in $\cH_1$ (by \cite[Lemma 4.26]{webs}), it follows that $K_{\ell_{17}}\subset K_P$ for each line $\ell_{17}$ in $o_{17}$ contained in $H(P)$. By \cite{lines}, $|K_{\ell_{17}}|=3$. Therefore the orbit of $\ell_{17}$ under $K_P$ has size
$$
\frac{|K_P|}{|K_{\ell_{17}}|}=\frac{q^2|\GL(2,q)|}{3}=\frac{1}{3}q^3(q-1)(q^2-1).
$$
By \cite[Theorem 4.29]{webs} this is equal to the total number of lines in $o_{17}$ contained in $H(P)$.
Hence $K_P$ acts transitively on the lines in $H(P)$ which belong to $o_{17}$.

     Next we claim that each plane $\langle P,\ell_{17}\rangle$, where $\ell_{17}$ is a line of type $o_{17}$ contained in $H(P)$, is disjoint from $\cV(\Fq)$. For if not, then $\langle P,\ell_{17}\rangle$ has point-orbit distribution $[a_1\geq 1,a_{2,n}\geq 1,a_{2,s},a_3\geq q+1]$. By \cite{planesqeven}, this implies that
$$
\langle P,\ell_{17}\rangle \in \Sigma_{3} \cup\Sigma_{4} \cup\Sigma_{8} \cup\Sigma_{9} \cup\Sigma_{10} \cup\Sigma_{11} \cup \Sigma_{15}.
$$     
Let $\eta$ denote a plane belonging to one of these orbits $\Sigma_i$, $i\in \{3,4,8,9,10,11,15\}$, and consider the cubic curve $\cC_\eta$ obtained by intersecting $\eta$ with the secant variety $\cV^{(2)}(\Fq)$. A direct computation shows in each case the cubic $\cC_\eta$ contains a line as a component, except for $\eta\in \Sigma_{11}$. Clearly, if $\cC_\eta$ contains a line then $\eta$ cannot contain a line $\ell_{17}$ of type $o_{17}$ since all points on $\ell_{17}$ have rank three. This implies that $\langle P,\ell_{17}\rangle \in \Sigma_{11}$. By Lemma \ref{Sigma_11} the plane $\langle P,\ell_{17}\rangle$ does not belong to $H(P)$. This proves the claim.\\

(iii) To finish the proof, we still need to show that such planes exist.
Note that it suffices to show that a plane $\pi=\langle P,\ell_{17}\rangle$, where $P\in \pi_{\mathcal{N}}$, and $\ell_{17}$ is a line of type $o_{17}$ in $H(P)$, with $H(P)$ the unique hyperplane in $\cH_1$ through $P$, has exactly one point of rank 2, since we already showed in part (ii) above that such a plane $\pi$ is disjoint from $\cV(\Fq)$. By way of contradiction suppose $\pi$ has at least two points of rank two. Since $\pi$ contains a constant rank three line, by the above, it follows that $\pi$ must have point-orbit distribution 
$[0,1,a_{2,s}\geq 1,a_3\geq q+1]$. Let $Q$ be a point of rank two in $\pi$ which does not lie in the nucleus plane. Then the line $\langle P,Q\rangle$ must have point-orbit distribution $[0,1,l_{2,s}\geq 1,l_3\geq 1]$. By \cite{lines} this implies, that the line $\langle P,Q\rangle$ belongs to $o_{13,1}$ and therefore belongs to the $K$-orbit of the line $\ell$ represented by
$$\begin{bmatrix}\cdot&x&\cdot\\x&y&\cdot\\ \cdot&\cdot &y\end{bmatrix}.$$
If $g\in K$ maps  $\langle P,Q\rangle$ to $\ell$ then $P^g$ is parameterised by $(x,y)=(1,0)$ and $Q^g$ is parameterised by $(x,y)=(0,1)$. But the point $Q^g$ does not belong to the hyperplane $H(P^g)$. This is a contradiction since it implies that $Q$ does not belong to $H(P)$. Therefore $\pi$ has a unique point of rank two.           
    \end{proof}

The following theorem establishes the existence of maximal planes of minimum rank two in $\PG(5,q)$, $q$ even. This illustrates a fundamental difference between $q$ even and $q$ odd. The proof relies on Theorem \ref{[0,q+1,0,q^2]} and Theorem \ref{[0,1,0,q^2+q]}, and the classification of solids from \cite{solidsqeven}.

\begin{Theorem}\label{CSRDplanesqeven}
        For $q\geq 4$ even, a plane in $\PG(5,q)$ disjoint from $\cV(\Fq)$ and intersecting  $\cV^{(2)}(\Fq)$ in a nonempty subspace of $\pi_{\mathcal{N}}$ is maximal with respect to that property, i.e., it is not contained in any solid of minimum rank 2.
\end{Theorem}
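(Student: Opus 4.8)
The plan is to reduce the statement to a surjectivity property of a quadratic map and then settle it using the bijectivity of Frobenius $x\mapsto x^2$ in characteristic $2$. First I would invoke Theorems \ref{[0,q+1,0,q^2]} and \ref{[0,1,0,q^2+q]}: a plane $\pi$ as in the hypothesis has all its points of rank $\le 2$ inside $\pi_{\mathcal{N}}$, hence belongs to $\Sigma_{16}$ (when $\pi\cap\pi_{\mathcal{N}}$ is a line) or to $\Sigma_{18}$ (when $\pi\cap\pi_{\mathcal{N}}$ is a point), with explicit representative nets $\mathcal{N}(\pi)$ of empty base and conic distribution $[q+1,0,0,q^2]$, resp. $[1,0,0,q^2+q]$. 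The key structural observation is that the solids $S$ containing $\pi$ are in one-to-one correspondence with the sub-pencils (projective lines) of the net $\mathcal{N}(\pi)$, via $S\mapsto\{c:\delta(c)\supseteq S\}$ and, conversely, a pencil $\mathfrak{p}\subseteq\mathcal{N}(\pi)$ giving $S=\bigcap_{c\in\mathfrak{p}}\delta(c)\supseteq\pi$. Since the rank-$1$ points of a subspace are the Veronese images of the base points of its associated system, $S$ has minimum rank $2$ precisely when the corresponding pencil $\mathfrak{p}$ has empty base. Thus the theorem is equivalent to the assertion that \emph{every} sub-pencil of $\mathcal{N}(\pi)$ has a nonempty base.

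Next I would phrase this in terms of the net map. Writing $\mathcal{N}(\pi)=\langle C_1,C_2,C_3\rangle$, consider $\phi_\pi:\PG(2,q)\to\PG(2,q)$, $x\mapsto(C_1(x):C_2(x):C_3(x))$. A sub-pencil of $\mathcal{N}(\pi)$ corresponds to a point $p$ of the target plane, and its base is exactly the fibre $\phi_\pi^{-1}(p)$; hence every sub-pencil has a base point if and only if $\phi_\pi$ is surjective. For $\Sigma_{16}$ the representative from Theorem \ref{[0,q+1,0,q^2]} gives $\phi_\pi=(X_0^2:X_0X_2+X_1^2:X_2^2)$, and surjectivity is immediate: given a target point, one solves for $x_0,x_2$ from the two square coordinates (unique square roots exist since $q$ is even) and then for $x_1$ from the middle coordinate, again extracting a square root, with the degenerate cases where a coordinate vanishes handled in the same way. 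For $\Sigma_{18}$ I would take its explicit representative net and run the identical argument, exploiting that each $C_i$ is, up to its cross term, a sum of squares, so that the fibre equations can be solved coordinate-by-coordinate using Frobenius.

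The main obstacle is the $\Sigma_{18}$ case: its net is less symmetric than that of $\Sigma_{16}$, so producing a clean representative and verifying surjectivity of $\phi_\pi$ directly requires more care. Alternatively, one can follow the route suggested by the classification of solids in \cite{solidsqeven}: assuming $\pi\subseteq S$ forces $S\in\Omega_{7}\cup\Omega_{13}\cup\Omega_{14}$, and since every hyperplane through $S$ contains $\pi$ one gets $OD_4(S)\le OD_4(\pi)$ componentwise, whence $h_{2,r}(S)=h_{2,i}(S)=0$. I expect this conic-distribution comparison to be the delicate point, because it is not by itself conclusive: there exist empty-base pencils whose only singular members are double lines, so one must additionally compare the finer plane-orbit distributions of $\Omega_{7},\Omega_{13},\Omega_{14}$ to exclude that any of them contains a plane of type $\Sigma_{16}$ or $\Sigma_{18}$. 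The surjectivity reformulation above is what makes the argument transparent and sidesteps this case analysis.
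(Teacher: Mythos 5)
Your reduction of maximality to surjectivity of the net map $\phi_\pi$ is sound --- solids through $\pi$ do correspond to sub-pencils of $\cN(\pi)$, minimum rank $2$ does correspond to empty base, and the fibre of $\phi_\pi$ over $p$ is exactly the base of the pencil attached to $p$ --- and your treatment of $\Sigma_{16}$ is complete and correct. But the proposal has a genuine gap at precisely the hard case, $\Sigma_{18}$, and it is not one the ``identical argument'' can fill. A representative of $\Sigma_{18}$ has to be built as $\langle P,\ell_{17}\rangle$ with $\ell_{17}$ a constant-rank-$3$ line in the unique $\cH_1$-hyperplane through $P$ (this is how Theorem \ref{[0,1,0,q^2+q]} constructs the orbit); carrying this out explicitly from the $o_{17}$ representative in Table \ref{tableoflineseven} produces a net whose three generators each contain \emph{several} cross terms, with coefficients involving the parameters $\alpha,\beta,\gamma$ of an irreducible cubic. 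These generators are not ``sums of squares up to a cross term'', the fibre equations do not decouple coordinate-by-coordinate under Frobenius, and establishing surjectivity of $\phi_\pi$ in this case is essentially as hard as the theorem itself. You also never treat $\pi=\pi_{\mathcal{N}}$, which satisfies the hypothesis of the statement (there $\phi_\pi=(X_0^2:X_1^2:X_2^2)$ is bijective, so this case at least is immediate in your framework).

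Moreover, the ``alternative route'' you set aside is the paper's actual proof, and it is already conclusive; your reason for doubting it is false. For $q\geq 4$ even, every solid of minimum rank $2$ lies in $\Omega_7\cup\Omega_{13}\cup\Omega_{14}$, and Table \ref{solidseven} gives their hyperplane-orbit distributions as $[1,0,1,q-1]$, $[0,1,2,q-2]$ and $[0,0,1,q]$: each has $h_{2,i}\geq 1$, i.e.\ \emph{every} empty-base pencil of conics in even characteristic contains a pair of conjugate imaginary lines. Hence there are no ``empty-base pencils whose only singular members are double lines'' when $q$ is even, and the componentwise inequality $OD_4(S)\leq OD_4(\pi)$ combined with $h_{2,r}(\pi)=h_{2,i}(\pi)=0$ (from Theorems \ref{[0,q+1,0,q^2]} and \ref{[0,1,0,q^2+q]}) gives an immediate contradiction; no finer comparison of plane-orbit distributions is needed. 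Had you consulted the table rather than dismissing this route, you would have had the complete proof, modulo the nucleus-plane case, which the paper settles separately by observing that solids in these three orbits meet $\pi_{\mathcal{N}}$ in at most $q+1$ points.
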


\begin{proof}

Recall that there are 3 $K$-orbits of solids of minimum rank two in $\PG(5,q)$, $q$ even: $\Omega_{7}$, $\Omega_{13}$, and $\Omega_{14}$. First observe that the nucleus plane $\pi_{\mathcal{N}}$ is maximal as a subspace of minimum rank $2$, since 
solids in $\Omega_{7}\cup\Omega_{13}\cup\Omega_{14}$ intersect $\pi_{\mathcal{N}}$ in at most $q+1$ points (see Table \ref{solidseven}). 
   Let $\pi\neq \pi_{\mathcal{N}}$ be a plane in 
$\PG(5,q)$ of minimum rank $2$ with no  rank-$2$ points outside $\pi_{\mathcal{N}}$.

If $\pi$ intersects $\pi_{\mathcal{N}}$ in a line, then $\pi \in \Sigma_{16}$ and $OD_0(\pi)=[0,q+1,0,q^2]$ (see Theorem \ref{[0,q+1,0,q^2]}). If $\pi$ intersects $\pi_{\mathcal{N}}$ in a point, then $\pi \in \Sigma_{18}$ and $OD_0(\pi)=[0,1,0,q^2+q]$ (see Theorem \ref{[0,1,0,q^2+q]}).
Moreover, it follows from the hyperplane-orbit distributions (determined in Theorem \ref{[0,q+1,0,q^2]} and Theorem \ref{[0,1,0,q^2+q]}) that in each of these two cases, the net of conics $\cN(\pi)$ corresponding to $\pi$ contains no line pairs (real or imaginary). Since each of the pencils of conics corresponding to the solids in $\Omega_{7}\cup\Omega_{13}\cup\Omega_{14}$ have at least one pair of conjugate imaginary lines, it follows that the net $\cN(\pi)$ contains no such pencil.
Therefore, planes in $\Sigma_{16}\cup \Sigma_{18}$ are maximal subspaces of minimum rank $2$. 
\end{proof}

\begin{Corollary}
Planes in $\PG(5,q)$ of minimum rank $2$  extend to solids in $\PG(5,q)$ of minimum rank $2$, except for planes intersecting $\cV^{(2)}(\Fq)$ in a non-empty subspace of $\pi_{\mathcal{N}}$ when $q$ is even, i.e., planes in $\Sigma_{\cN}\cup\Sigma_{16}\cup\Sigma_{18}$.
\end{Corollary}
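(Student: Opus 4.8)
The plan is to observe that this Corollary is essentially a bookkeeping summary of the theorems proved immediately before it, so the proof should simply assemble those pieces and verify that the enumerated cases are exhaustive. The statement asserts two things: first, that every plane of minimum rank $2$ extends to a solid of minimum rank $2$ \emph{except} for the three listed orbits $\Sigma_{\cN}\cup\Sigma_{16}\cup\Sigma_{18}$; and second, that these three orbits really do fail to extend. I would organise the argument around a case split on the dimension of $\pi\cap\pi_{\mathcal{N}}$ (with the odd-$q$ case handled separately, where $\pi_{\mathcal{N}}$ does not exist).

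First I would dispose of the extendable cases. For $q$ odd, Theorem \ref{maximalplanesqodd} already states that \emph{every} plane of minimum rank $2$ lies in a solid in $\Omega_{8,2}\cup\Omega_{14,2}\cup\Omega_{15,2}$, all of which have minimum rank $2$ by the Remark; so for $q$ odd there are no exceptions, consistent with the Corollary (for $q$ odd the excepted set is empty). For $q$ even, Theorem \ref{nonmaximalplaneseven} covers every plane of minimum rank $2$ possessing at least one rank-$2$ point outside $\pi_{\mathcal{N}}$: each such plane extends to a solid in $\Omega_{7}\cup\Omega_{13}\cup\Omega_{14}$, again of minimum rank $2$. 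This leaves, for $q$ even, precisely the planes of minimum rank $2$ all of whose rank-$\le 2$ points lie in $\pi_{\mathcal{N}}$, i.e. planes meeting $\cV^{(2)}(\Fq)$ in a non-empty subspace of $\pi_{\mathcal{N}}$.

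Next I would argue that these remaining planes are exactly $\Sigma_{\cN}\cup\Sigma_{16}\cup\Sigma_{18}$ and that they are non-extendable. The intersection $\pi\cap\pi_{\mathcal{N}}$ is a non-empty subspace of the plane $\pi_{\mathcal{N}}$, hence is either all of $\pi_{\mathcal{N}}$ (giving $\pi=\pi_{\mathcal{N}}$, the orbit $\Sigma_{\cN}$), a line (giving $\pi\in\Sigma_{16}$ by Theorem \ref{[0,q+1,0,q^2]}), or a single point (giving $\pi\in\Sigma_{18}$ by Theorem \ref{[0,1,0,q^2+q]}); this trichotomy is exhaustive since a plane cannot meet another plane in the empty set inside $\PG(5,q)$ while the hypothesis forbids the meet from being empty. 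That each of these three orbits is maximal with respect to minimum rank $2$, and therefore fails to extend to any solid of minimum rank $2$, is precisely the content of Theorem \ref{CSRDplanesqeven} (together with its opening observation that $\pi_{\mathcal{N}}$ itself is maximal).

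The only genuine obstacle is the verification that the case split is truly exhaustive and that no plane slips through a gap between the hypotheses of Theorems \ref{maximalplanesqodd}, \ref{nonmaximalplaneseven} and \ref{CSRDplanesqeven}; this is a dimension-count over subspaces of $\pi_{\mathcal{N}}$ rather than a substantive difficulty. I would therefore write the proof as: invoke Theorem \ref{maximalplanesqodd} for $q$ odd; for $q$ even split on whether $\pi$ has a rank-$2$ point outside $\pi_{\mathcal{N}}$, applying Theorem \ref{nonmaximalplaneseven} in the affirmative case; and in the negative case use the trichotomy on $\dim(\pi\cap\pi_{\mathcal{N}})$ together with Theorems \ref{[0,q+1,0,q^2]}, \ref{[0,1,0,q^2+q]} and \ref{CSRDplanesqeven} to identify the orbits and conclude maximality. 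No new computation beyond these citations should be required.
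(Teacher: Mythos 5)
Your proposal is correct and takes essentially the same route as the paper, whose entire proof is the citation of Theorems \ref{maximalplanesqodd}, \ref{nonmaximalplaneseven} and \ref{CSRDplanesqeven}; you merely make explicit the case split and the orbit identification (via Theorems \ref{[0,q+1,0,q^2]} and \ref{[0,1,0,q^2+q]}) that those theorems already encode. One immaterial slip: two planes in $\PG(5,q)$ \emph{can} be disjoint, so the non-emptiness of $\pi\cap\pi_{\mathcal{N}}$ follows solely from the hypothesis that $\pi$ contains a rank-$2$ point and none outside $\pi_{\mathcal{N}}$, which is the reason you also give.
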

\begin{proof}
    This is a direct consequence of Theorems \ref{maximalplanesqodd},  \ref{nonmaximalplaneseven} and \ref{CSRDplanesqeven}
\end{proof}
\begin{Corollary}
   Every $3$-dimensional $\Fq$-linear SRD code in $M_{3\times 3}(\Fq)$ of minimum distance $2$ extends to an MSRD code, except for those with rank distributions $(1,0,q^3-1,0)$, $(1,0,q^2-1,q^3-q^2)$ and $(1,0,q-1,q^3-q)$ for $q>2$ even.
\end{Corollary}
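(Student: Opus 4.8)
The plan is to reduce this coding-theoretic statement to the purely geometric corollary that immediately precedes it. Recall from Section~\ref{geometricInt} that a $3$-dimensional $\Fq$-linear SRD code in $M_{3\times 3}(\Fq)$ of minimum distance $2$ corresponds to a plane of $\PG(5,q)$ of minimum rank $2$, and that an MSRD code of minimum distance $2$ corresponds to a solid of minimum rank $2$ (the bound (\ref{eqn:dim_bound}) yields dimension $4$ when $n=3$ and $d=2$). Thus the code extends to an MSRD code if and only if its associated plane extends to a solid of minimum rank $2$. The first step, therefore, is simply to invoke the preceding corollary, which identifies the planes of minimum rank $2$ that fail to extend as exactly those in $\Sigma_{\cN}\cup\Sigma_{16}\cup\Sigma_{18}$, and which occur only for $q>2$ even. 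For $q$ odd, Theorem~\ref{maximalplanesqodd} guarantees that no exceptions arise, so the ``except for'' clause is empty in that case.

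The second step is the translation from point-orbit distributions to rank distributions. The observation I would use is that scalar multiplication preserves the rank of a symmetric matrix, so each projective point of rank $i$ in a plane $\pi$ lifts to precisely $q-1$ nonzero matrices of rank $i$ in the corresponding code. Consequently, if $\pi$ carries $p_i$ points of rank $i$, the associated code has rank distribution $(1,(q-1)p_1,(q-1)p_2,(q-1)p_3)$, where the leading $1$ counts the zero matrix; as a sanity check, $1+(q-1)(q^2+q+1)=q^3$ recovers the full code size.

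It then remains to read off the three point-orbit distributions and substitute. The nucleus plane $\pi_{\mathcal{N}}=\cZ(Y_0,Y_3,Y_5)$ consists entirely of rank-$2$ points (a nonzero symmetric matrix with zero diagonal cannot have rank $1$, and its determinant vanishes in characteristic $2$), so it has $q^2+q+1$ rank-$2$ points and hence rank distribution $(1,0,q^3-1,0)$. For $\Sigma_{16}$, Theorem~\ref{[0,q+1,0,q^2]} supplies the point-orbit distribution $[0,q+1,0,q^2]$, giving $(1,0,q^2-1,q^3-q^2)$, and for $\Sigma_{18}$, Theorem~\ref{[0,1,0,q^2+q]} supplies $[0,1,0,q^2+q]$, giving $(1,0,q-1,q^3-q)$. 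These are exactly the three excluded distributions, and since they are pairwise distinct they genuinely identify the three non-extendable orbits.

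I do not anticipate a serious obstacle here, as the argument is essentially a bookkeeping translation of results already established. The only points requiring care are the projective-to-affine scaling factor of $q-1$, the verification that $\pi_{\mathcal{N}}$ has no rank-$1$ points, and the restriction to $q>2$ even (the case $q=2$ being excluded because the nucleus plane is not preserved by the full stabiliser $\operatorname{Sym}(7)$ of $\cV(\mathbb{F}_2)$, so the orbit classification underlying the preceding corollary does not apply there).
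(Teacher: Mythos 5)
Your proposal is correct and follows essentially the same route as the paper's own proof: reduce to the preceding geometric corollary (via Theorem~\ref{maximalplanesqodd} for $q$ odd and the classification of planes in $\Sigma_{\cN}\cup\Sigma_{16}\cup\Sigma_{18}$ for $q$ even), then read off the rank distributions from the point-orbit distributions $[0,q^2+q+1,0,0]$, $[0,q+1,0,q^2]$ and $[0,1,0,q^2+q]$. The only difference is that you make explicit the $(q-1)$-scaling between projective points and nonzero matrices, which the paper leaves implicit; your side remark attributing the exclusion of $q=2$ to the $\operatorname{Sym}(7)$ action is not quite the paper's reason (the classification theorems for $\Sigma_{16}$ and $\Sigma_{18}$ are simply only proved for $q\geq 4$), but this does not affect the argument.
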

\begin{proof}
    A $3$-dimensional $\Fq$-linear SRD code $C$ in $M_{3\times 3}(\Fq)$ of minimum distance $2$ corresponds to a plane $\pi$ in $\PG(5,q)$ of minimum rank 2. By Theorem \ref{maximalplanesqodd} such a code $C$ is contained in an MSRD code in $M_{3\times 3}(\Fq)$, if $q$ is odd. If $q>2$ is even, then the same holds true, unless $\pi$ intersects $\cV^{(2)}(\Fq)$ in a non-empty subspace of $\pi_{\mathcal{N}}$, by the above Corollary. The rank distribution of the code $C$ follows from the point-orbit distributions of the three $K$-orbits of planes, corresponding to the three different cases in the proof of Theorem \ref{CSRDplanesqeven}. The nucleus plane has point-orbit distribution $[0,q^2+q+1,0,0]$, and the other two point-orbit distributions were determined in Theorem \ref{[0,q+1,0,q^2]} and Theorem \ref{[0,1,0,q^2+q]}.
\end{proof}

\subsubsection{2-dimensional SRD codes}
In this section, we demonstrate the non-existence of $2$-dimensional CSRD codes in $M_{3\times 3}(\Fq)$ with minimum distance $2$ by examining lines in $\PG(5,q)$ of minimum rank $2$. In fact, we will show that each line  of minimum rank 2 is contained in a solid of minimum rank two. As a corollary, it follows that each $2$-dimensional $\Fq$-linear SRD code in $M_{3\times 3}(\Fq)$ of minimum distance $2$ extends to an MSRD code in $M_{3\times 3}(\Fq)$.

\begin{Remark}\label{nomaxlinesandhyperplanes}
 A line $\ell$ in $ \PG(5,q)$ of minimum rank $2$ belongs to $o_{10}\cup o_{12,1}\cup o_{13,1}\cup o_{13,2}\cup o_{14,1}\cup o_{14,2} \cup o_{15,1}\cup o_{15,2}\cup o_{16,1}, $ for $q$ odd, and to $o_{10}\cup o_{12,1}\cup o_{12,3}\cup o_{13,1}\cup o_{13,3} \cup o_{14,1}\cup o_{15,1}\cup o_{16,1}\cup o_{16,13}, $ for $q$ even (see Tables \ref{tableoflineseven} and \ref{solidsodd}).

 \begin{Lemma}\label{NoCSRDLines}
    A line $\ell$ in $ \PG(5,q)$ of minimum rank $2$ is contained in a plane of minimum rank 2.
 \end{Lemma}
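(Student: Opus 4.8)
The plan is to reuse, one dimension lower, the counting argument from the proof of Theorem~\ref{maximalplanesqodd}: there a plane was extended to a solid by comparing the number of solids through it with the number of rank-one points, and here I would extend a line to a plane by comparing the number of planes through it with the number of rank-one points. First I would record the two facts that make the statement essentially formal. By definition, a line $\ell$ of minimum rank $2$ carries no rank-one point, i.e.\ $\ell \cap \cV(\Fq)=\emptyset$, while carrying at least one rank-two point. Consequently, for \emph{any} plane $\pi$ with $\ell\subset\pi$, the rank-two point of $\ell$ already lies in $\pi$; so it suffices to exhibit a single plane through $\ell$ that remains disjoint from $\cV(\Fq)$, since such a plane then has no rank-one point and at least one rank-two point, forcing its minimum rank to be exactly $2$.

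Next I would carry out the count. The planes of $\PG(5,q)$ containing $\ell$ are in bijection with the points of the quotient space $\PG(5,q)/\ell\cong\PG(3,q)$, so their number is
$$
\gaussbin{4}{1}=q^{3}+q^{2}+q+1 .
$$
On the other hand, every plane through $\ell$ that meets $\cV(\Fq)$ contains at least one of the $q^{2}+q+1=\gaussbin{3}{1}$ rank-one points, so the assignment $P\mapsto\langle\ell,P\rangle$, ranging over $P\in\cV(\Fq)$, surjects onto the set of planes through $\ell$ that meet $\cV(\Fq)$. Hence at most $q^{2}+q+1$ of the planes through $\ell$ can intersect the Veronese surface.

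Comparing the two counts, at least $(q^{3}+q^{2}+q+1)-(q^{2}+q+1)=q^{3}>0$ of the planes through $\ell$ are disjoint from $\cV(\Fq)$; picking any one of them yields the desired plane of minimum rank $2$ containing $\ell$. I expect there to be no genuine obstacle here: the surplus of $q^3$ planes is enormous, so no careful case analysis (for instance, no separate treatment of the line-orbits listed in Remark~\ref{nomaxlinesandhyperplanes}, nor any splitting into $q$ even and $q$ odd) is needed, and the argument is uniform in $q$. The only two points requiring a word of care are the identification ``minimum rank $2$ $\Leftrightarrow$ disjoint from $\cV(\Fq)$ together with a rank-two point,'' which guarantees that the plane we produce is of minimum rank $2$ rather than minimum rank $3$, and the bijection between planes through $\ell$ and points of $\PG(3,q)$, which supplies the count $\gaussbin{4}{1}$.
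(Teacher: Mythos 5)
Your proposal is correct and is essentially identical to the paper's own proof: the paper also counts the $q^3+q^2+q+1$ planes through $\ell$, observes that at most $q^2+q+1$ of them can meet $\cV(\Fq)$ (one per rank-one point), and concludes that at least $q^3$ planes through $\ell$ have minimum rank $2$. Your write-up merely makes explicit the two routine points the paper leaves implicit (the bijection with $\PG(3,q)$ and the fact that disjointness from $\cV(\Fq)$ plus the rank-two point of $\ell$ forces minimum rank exactly $2$).
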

 \begin{proof}
     This follows by observing that a line $\ell$ of minimum rank $2$ is contained in $q^3 + q^2 + q + 1$ planes in $\PG(5, q)$, at most $q^2 + q + 1$ of which intersect $\cV(\Fq)$ non-trivially. Hence, $\ell$ is contained in at least $q^3$ planes in $\PG(5, q)$ of minimum rank $2$. 
 \end{proof}
 
\begin{Corollary}
 There are no $2$-dimensional CSRD codes in $M_{3\times 3}(\Fq)$ with $d=2$.
\end{Corollary}

\end{Remark}

\begin{Lemma}\label{maxlineseven}
A line in $\PG(5,q)$ of minimum rank $2$ extends to a solid in $\Omega_{8,2}\cup\Omega_{14,2}\cup\Omega_{15,2}$ for $q$ odd and $\Omega_{7}\cup\Omega_{13}\cup\Omega_{14}$ for $q$ even.
\end{Lemma}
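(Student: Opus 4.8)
The plan is to deduce the statement from Lemma \ref{NoCSRDLines} together with the plane-extension results already established, treating the two parities separately. Let $\ell$ be a line of $\PG(5,q)$ of minimum rank $2$. By Lemma \ref{NoCSRDLines}, $\ell$ is contained in a plane $\pi$ of minimum rank $2$. When $q$ is odd, Theorem \ref{maximalplanesqodd} immediately places $\pi$, and hence $\ell$, inside a solid of $\Omega_{8,2}\cup\Omega_{14,2}\cup\Omega_{15,2}$, which settles this case at once.

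The substance lies in the even case, where Theorem \ref{CSRDplanesqeven} shows that the planes in $\Sigma_{\mathcal{N}}\cup\Sigma_{16}\cup\Sigma_{18}$ are maximal of minimum rank $2$, so the plane produced by Lemma \ref{NoCSRDLines} need not extend. My reduction is therefore to produce, through $\ell$, a plane of minimum rank $2$ carrying a rank-$2$ point outside the nucleus plane $\pi_{\mathcal{N}}$, since Theorem \ref{nonmaximalplaneseven} then extends such a plane to a solid of $\Omega_{7}\cup\Omega_{13}\cup\Omega_{14}$. Concretely, I would seek a point $R$ of rank $2$ in $\mathcal{P}_{2,s}$ (rank $2$, outside $\pi_{\mathcal{N}}$) with $R\notin\ell$ and $\langle \ell,R\rangle\cap\cV(\Fq)=\emptyset$. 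For such an $R$ the plane $\langle \ell,R\rangle$ contains the rank-$2$ point of $\ell$, so it has minimum rank $2$, and it contains $R$ outside $\pi_{\mathcal{N}}$, whence Theorem \ref{nonmaximalplaneseven} applies.

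To guarantee that such an $R$ exists I would count planes on $\ell$, in the spirit of Lemma \ref{NoCSRDLines}. Of the $q^3+q^2+q+1$ planes containing $\ell$, at most $q^2+q+1$ meet $\cV(\Fq)$, so at least $q^3$ of them are disjoint from $\cV(\Fq)$ and thus of minimum rank $2$. Among these, the only ones lacking a rank-$2$ point outside $\pi_{\mathcal{N}}$ lie in $\Sigma_{\mathcal{N}}\cup\Sigma_{16}\cup\Sigma_{18}$ (by Theorems \ref{[0,q+1,0,q^2]}, \ref{[0,1,0,q^2+q]} and \ref{CSRDplanesqeven}), so it suffices to show that the number of such maximal planes on $\ell$ is strictly less than $q^3$. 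Since a plane on $\ell$ meets $\pi_{\mathcal{N}}$ in at least $\ell\cap\pi_{\mathcal{N}}$, I would split according to whether $\ell\cap\pi_{\mathcal{N}}$ is empty, a single point, or all of $\ell$. When $\ell\cap\pi_{\mathcal{N}}=\emptyset$, two lines of a common plane always meet, so no plane on $\ell$ can meet $\pi_{\mathcal{N}}$ in a line; thus $\Sigma_{16}$ is excluded and only the $\Sigma_{18}$ planes on $\ell$ remain to be bounded. When $\ell\cap\pi_{\mathcal{N}}$ is a point or all of $\ell$ (the orbits $o_{16,1}$ and $o_{16,13}$ of Remark \ref{nomaxlinesandhyperplanes}), the maximal planes on $\ell$ lie in $\Sigma_{16}\cup\Sigma_{18}$, together with $\pi_{\mathcal{N}}$ itself when $\ell\subset\pi_{\mathcal{N}}$, and their number is controlled by the stabiliser and incidence data established in Theorems \ref{[0,q+1,0,q^2]} and \ref{[0,1,0,q^2+q]}.

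I expect the main obstacle to be precisely this final bookkeeping for $q$ even: bounding the number of maximal planes of $\Sigma_{16}$ and $\Sigma_{18}$ through a fixed line, which requires the incidence counts between these two orbits and the line orbits $o_{16,1}$ and $o_{16,13}$. A cleaner alternative, more in keeping with the explicit style of the earlier proofs, is to take the canonical representatives of $o_{16,1}$ and $o_{16,13}$ from Table \ref{tableoflineseven} and exhibit directly, by a matrix computation, a solid of $\Omega_{7}\cup\Omega_{13}\cup\Omega_{14}$ containing each; the remaining even line orbits of minimum rank $2$ all possess a rank-$2$ point outside $\pi_{\mathcal{N}}$ and are disposed of uniformly by the reduction above.
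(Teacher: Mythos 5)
Your treatment of the odd case and of even lines possessing a rank-$2$ point outside $\pi_{\mathcal{N}}$ is correct and agrees with the paper's proof. The gap is in the residual even case, and it is twofold. First, you misidentify which line orbits actually need special treatment. By Table \ref{tableoflineseven}, the lines of minimum rank $2$ all of whose rank-$2$ points lie in $\pi_{\mathcal{N}}$ are those of $o_{12,1}$ (lines contained in $\pi_{\mathcal{N}}$, point-orbit distribution $[0,q+1,0,0]$) and $o_{16,1}$ (distribution $[0,1,0,q]$). The orbit you call $o_{16,13}$ (the label in Remark \ref{nomaxlinesandhyperplanes}, i.e.\ $o_{16,3}$ of Table \ref{tableoflineseven}) has distribution $[0,0,1,q]$: its unique rank-$2$ point lies \emph{outside} $\pi_{\mathcal{N}}$, so it is already covered by your easy reduction, whereas $o_{12,1}$ is not. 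Your closing claim that ``the remaining even line orbits of minimum rank $2$ all possess a rank-$2$ point outside $\pi_{\mathcal{N}}$'' is false precisely for $o_{12,1}$, so your ``cleaner alternative'' (verify representatives of $o_{16,1}$ and $o_{16,3}$) would leave the lines of the nucleus plane unhandled. This alternative is, in spirit, exactly what the paper does, but with the correct pair of orbits: the paper checks that the representative solid $S_7\in\Omega_7$ meets $\pi_{\mathcal{N}}$ in a line (hence contains a line of $o_{12,1}$, read off from $OD_0(S_7)=[0,q+1,q^2+q,q^3-q]$) and, by a one-line parametrisation, contains a line with distribution $[0,1,0,q]$, i.e.\ of type $o_{16,1}$; transitivity of $K$ on each line orbit then finishes the argument.

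Second, the quantitative step carrying your primary (counting) route is never established. For $\ell\in o_{16,1}$ you must show that strictly fewer than $q^3$ of the planes through $\ell$ lie in $\Sigma_{16}\cup\Sigma_{18}$, and you only assert that this number ``is controlled by the stabiliser and incidence data''. The bound is true, but it requires an idea you do not supply: by part (ii) of the proof of Theorem \ref{[0,1,0,q^2+q]}, every plane of $\Sigma_{18}$ is contained in the unique hyperplane $H(P)\in\cH_1$ attached to its rank-$2$ point $P$; hence all $\Sigma_{18}$ planes through $\ell$ lie in $H(P)$ with $P=\ell\cap\pi_{\mathcal{N}}$, and a line lies on at most $q^2+q+1$ planes inside a fixed hyperplane. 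Adding the at most $q+1$ planes of $\Sigma_{16}$ through $\ell$ (each is spanned by $\ell$ and a line of $\pi_{\mathcal{N}}$ through $P$) gives at most $q^2+2q+2<q^3$ for $q\geq 4$, as needed. (For $\ell\in o_{12,1}$ no counting is needed either: the planes $\pi_{1,b,c}$, $c\neq 0$, exhibited in the proof of Theorem \ref{[0,q+1,0,q^2]} contain $\ell_{12,1}$, are disjoint from $\cV(\Fq)$, and have rank-$2$ points outside $\pi_{\mathcal{N}}$, so Theorem \ref{nonmaximalplaneseven} applies.) As written, then, your proposal is a plausible plan whose two decisive ingredients --- the correct orbit list and the plane count --- are respectively wrong and missing.
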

\begin{proof}

Let $\ell$ be a line in $\PG(5,q)$ of minimum rank $2$. By Lemma \ref{NoCSRDLines}, $\ell$ is contained in at least $q^3$ planes of minimum rank $2$. For $q$ odd, it follows by Lemma \ref{maximalplanesqodd} that $\ell \in \Omega_{8,2}\cup\Omega_{14,2}\cup\Omega_{15,2}$. If $q$ is even and $\ell \cap \cV^2(\Fq)\not\subset \pi_{\mathcal{N}}$, then $\ell \in \Omega_{7}\cup\Omega_{13}\cup\Omega_{14}$ by Lemma \ref{nonmaximalplaneseven}. If $q$ is even and $\ell \cap \cV^2(\Fq)$ is a non-empty subspace of  $\pi_{\mathcal{N}}$, then $\ell \in o_{12,1}\cup o_{16,1}$ by \cite{lines}. Next, we claim that a solid in $\Omega_7$ contains at least one line in  $o_{12,1}$ and at least one line in $ o_{16,1}$. Let $S_7$ be the representative of $\Omega_7$ as described in Table \ref{solidseven},
$$
S_7:\begin{bmatrix} x&y&z\\ y&x+\gamma y&t\\z&t&y \end{bmatrix}.
$$
The point-orbit distribution of $S_7$ is $[0,q+1,q^2+q,q^3-q]$, and therefore $S_7$ intersects $\pi_{\mathcal{N}}$ in a line belonging to $o_{12,1}$. Consider the line $L$ in $S_7$ parameterised by $(x,y,z,t)=(0,y,0,t)$:
$$
L:\begin{bmatrix} 0&y&0\\ y&\gamma y&t\\0&t&y \end{bmatrix}.
$$
A direct computation shows that $L$ has a point-orbit distribution $OD_0(L)=[0,1,0,q]$. By \cite{lines} (see Table \ref{tableoflineseven}), this implies that $L$ belongs to the $K$-orbit $o_{16,1}$.
\end{proof}

\begin{Corollary}
   Every $2$-dimensional $\Fq$-linear SRD code in $M_{3\times 3}(\Fq)$ of minimum distance $2$ extends to an MSRD code.
\end{Corollary}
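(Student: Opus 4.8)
The plan is to reinterpret the purely geometric content of Lemma \ref{maxlineseven} in coding-theoretic language; the statement is essentially an immediate translation. First I would recall the dictionary from Section \ref{geometricInt}: a $2$-dimensional $\Fq$-linear SRD code $C$ in $M_{3\times 3}(\Fq)$ of minimum distance $2$ corresponds to a $(2-1)=1$-dimensional subspace of $\PG(5,q)$, i.e. a line $\ell$, which is disjoint from $\cV^{(1)}(\Fq)=\cV(\Fq)$, that is, a line of minimum rank $2$. By the dimension bound (\ref{eqn:dim_bound}) applied with $n=3$ and $d=2$ (where $n-d=1$ is odd, giving $\dim(C)\leq 4$), an $\Fq$-linear MSRD code of minimum distance $2$ corresponds precisely to a $3$-dimensional subspace (a solid) of $\PG(5,q)$ of minimum rank $2$. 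Under this correspondence, inclusion $C\subseteq C'$ of codes matches inclusion $\ell\subseteq S$ of the associated projective subspaces.

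Next I would invoke the description of solids of minimum rank $2$ recorded in the Remark preceding Theorem \ref{maximalplanesqodd}: these are exactly the solids in $\Omega_{8,2}\cup\Omega_{14,2}\cup\Omega_{15,2}$ for $q$ odd and in $\Omega_{7}\cup\Omega_{13}\cup\Omega_{14}$ for $q$ even. The conclusion is then immediate from Lemma \ref{maxlineseven}, which asserts that any line $\ell$ of minimum rank $2$ is contained in such a solid $S$. Since $S$ has minimum rank $2$ and projective dimension $3$, the code $C'$ associated with $S$ is an MSRD code containing $C$. Hence every such $C$ extends to an MSRD code.

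There is essentially no obstacle remaining: all of the genuine work sits in Lemma \ref{maxlineseven} (and, for $q$ even, in its auxiliary verification that a solid of $\Omega_7$ meets $\pi_{\mathcal{N}}$ in a line of $o_{12,1}$ while also containing a line of $o_{16,1}$, handled there via the explicit representative $S_7$ and the line $L$). The only point meriting a word of care is the $q$ even subcase in which $\ell\cap\cV^{(2)}(\Fq)$ is a nonempty subspace of $\pi_{\mathcal{N}}$, where the counting argument behind Lemma \ref{nonmaximalplaneseven} no longer produces an extending solid; but this case is already resolved inside Lemma \ref{maxlineseven}. I would therefore present this Corollary as a short deduction, explicitly spelling out the map $C\mapsto\ell$ and the membership $S\in\Omega_{8,2}\cup\Omega_{14,2}\cup\Omega_{15,2}$ (resp.\ $\Omega_{7}\cup\Omega_{13}\cup\Omega_{14}$) so that the reader sees at once why the extending solid is an MSRD code.
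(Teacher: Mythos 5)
Your proposal is correct and matches the paper's approach exactly: the paper states this corollary without further argument because it is the direct coding-theoretic translation of Lemma \ref{maxlineseven}, using the dictionary of Section \ref{geometricInt} (a $2$-dimensional SRD code of minimum distance $2$ is a line of minimum rank $2$, and a solid of minimum rank $2$ is an MSRD code since the bound (\ref{eqn:dim_bound}) gives dimension $4$ for $n=3$, $d=2$). Your write-up merely makes this translation explicit, which is a fair and accurate rendering of the intended proof.
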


\subsubsection{The number of equivalence classes of CSRD codes}

In the following, we determine the number of equivalence classes of $\Fq$-linear CSRD codes in $M_{3\times 3}(\Fq)$ with $d=2$. Recall the definition of the group $G$ defined at the start of Section \ref{geometricInt}.
Also recall the geometric setup where the Veronese variety is embedded in the Segre variety, ${\mathcal{V}}(\Fq)\subset {\mathcal{S}}_{3,3}(\Fq)$, $\PG(5,q)=\langle {\mathcal{V}}(\Fq)\rangle$, and $\PG(8,q)=\langle {\mathcal{S}}_{3,3}(\Fq)\rangle$.
Following \cite{lines}, we say that a $G$-orbit $W^G$ of a subspace $W$ of $\PG(8,q)$ is said to {\it admit a symmetric representation} if there exists a subspace $U$ of $\PG(5,q)$ such that $U^G=W^G$. If $U$ and $W$ are subspaces of $\PG(5,q)$, then the $K$-orbits $U^K$ and $W^K$ are said to {\it merge} as $G$-orbits in $\PG(8,q)$ if $U^G=W^G$.

 \begin{Theorem}\label{equivqoddgeom}
  For $q$ odd, there are three $G$-orbits of solids in $\PG(8, q)$ that admit a symmetric representation with minimum rank $2$.
 \end{Theorem}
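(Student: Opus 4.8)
The plan is to reduce the statement to the fact that the rank distribution of a subspace is invariant under the full left--right action of $G$, and then to separate the three relevant orbits by this coarser invariant.

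First I would recall from the Remark preceding Theorem \ref{maximalplanesqodd} that, for $q$ odd, the solids of $\PG(5,q)$ of minimum rank $2$ fall into exactly the three $K$-orbits $\Omega_{8,2}$, $\Omega_{14,2}$, $\Omega_{15,2}$. Fix symmetric representatives $S_{8,2}$, $S_{14,2}$, $S_{15,2}$, one in each orbit, and regard them as solids of $\PG(8,q)$ via the embedding $\PG(5,q)=\langle\cV(\Fq)\rangle\subset\langle\mathcal{S}_{3,3}(\Fq)\rangle=\PG(8,q)$. Any $G$-orbit of solids admitting a symmetric representation with minimum rank $2$ contains, by definition, some solid $U\subseteq\PG(5,q)$ with $U^G=W^G$; since $\Fq$-rank is preserved by the action $X\mapsto AXB$, this $U$ itself has minimum rank $2$, and hence is $K$-equivalent (a fortiori $G$-equivalent) to one of $S_{8,2},S_{14,2},S_{15,2}$. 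Thus there are \emph{at most} three such $G$-orbits.

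It remains to show that these three orbits are pairwise distinct, i.e.\ that no two of the $K$-orbits merge as $G$-orbits. Here I would use the rank distribution $(r_1,r_2,r_3)$, which is a $G$-invariant precisely because left and right multiplication by invertible matrices preserve rank. For each representative it is read off from the point-orbit distribution $[r_1,r_{2,e},r_{2,i},r_3]$ recorded in Table \ref{solidsodd} by setting $r_1=0$ and merging the two rank-two entries into $r_2=r_{2,e}+r_{2,i}$; equivalently, $r_2$ counts the $\Fq$-points of the determinantal cubic surface $\cZ(\det)\cap S$ inside $S\cong\PG(3,q)$. I would then verify that the resulting triples $(0,r_2,r_3)$ for $S_{8,2}$, $S_{14,2}$, $S_{15,2}$ are pairwise different. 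Since distinct rank distributions force the solids into distinct $G$-orbits, this yields exactly three $G$-orbits and completes the proof.

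The main obstacle is exactly this last step: the rank distribution is only a coarse invariant, so the argument hinges on the three values of $r_2$ being genuinely distinct. If two of them coincided, one would have to replace the rank distribution by a finer $G$-invariant — for instance the isomorphism type of the cubic surface $\cZ(\det)\cap S$, or the degeneration pattern of the associated base-point-free pencil of conics — in order to separate the two orbits. I expect, however, that reading off the three point-orbit distributions from Table \ref{solidsodd} already exhibits three distinct rank distributions, so that the coarse invariant suffices and no such refinement is needed.
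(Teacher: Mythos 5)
Your proposal is correct and follows essentially the same route as the paper: restrict to the three $K$-orbits $\Omega_{8,2},\Omega_{14,2},\Omega_{15,2}$ of minimum-rank-$2$ solids (via Theorem \ref{maximalplanesqodd} and Table \ref{solidsodd}), then separate them as $G$-orbits by the rank distribution, which is a $G$-invariant. The verification you defer does go through trivially: the rank-$3$ counts alone, namely $q^3-q$, $q^3-2q$ and $q^3$, are already pairwise distinct, so no finer invariant is needed.
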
 
 \begin{proof}
     It follows from Remark \ref{nomaxlinesandhyperplanes},  Theorem \ref{maximalplanesqodd} and the point-orbit distributions of solids in $\PG(5,q)$ (summarized in Table \ref{solidsodd}) that solids in $\Omega_{8,2}\cup\Omega_{14,2}\cup\Omega_{15,2}$ are the only solids of minimum rank $2$. These $3$ $K$-orbits of solids in $\PG(5,q)$ do not merge as $G$-obits in the embedded space $\PG(8,q)$, since they are completely identified by their rank distributions. 
 \end{proof}

 \begin{Theorem}
     There are $3$ equivalence classes of $\Fq$-linear CSRD codes in $M_{3\times 3}(\Fq)$ of minimum distance $2$ for $q$ odd. 
 \end{Theorem}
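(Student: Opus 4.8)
The plan is to translate the geometric classification just established into the language of $\Fq$-linear CSRD codes. The key observation is the dictionary set up in Section \ref{geometricInt}: a $k$-dimensional $\Fq$-linear SRD code in $M_{3\times 3}(\Fq)$ of minimum distance $2$ corresponds to a $(k-1)$-dimensional subspace of $\PG(5,q)$ of minimum rank $2$, and linear equivalence of codes under $\GL(3,q)\times\GL(3,q)$ corresponds to equivalence of the associated subspaces of $\PG(8,q)$ under the Segre-stabiliser group $G$. A code is \emph{complete} precisely when its subspace is maximal with respect to having minimum rank $2$. So the whole problem reduces to counting $G$-orbits of such maximal subspaces.

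First I would argue that every complete code corresponds to a maximal subspace of $\PG(5,q)$ of minimum rank $2$, and identify which dimensions can occur. For $q$ odd, the earlier results pin this down completely: by the Corollary following Lemma \ref{NoCSRDLines} there are no $2$-dimensional CSRD codes (every line of minimum rank $2$ lies in a plane of minimum rank $2$), by Theorem \ref{maximalplanesqodd} every plane of minimum rank $2$ extends to a solid of minimum rank $2$, so no $3$-dimensional code is complete either. Hence for $q$ odd the only complete codes are the $4$-dimensional ones, i.e.\ the MSRD codes, whose subspaces are exactly the solids in $\Omega_{8,2}\cup\Omega_{14,2}\cup\Omega_{15,2}$.

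Next I would invoke Theorem \ref{equivqoddgeom}, which already does the hard counting: it states that for $q$ odd there are exactly three $G$-orbits of solids in $\PG(8,q)$ admitting a symmetric representation of minimum rank $2$, and that the three $K$-orbits $\Omega_{8,2}$, $\Omega_{14,2}$, $\Omega_{15,2}$ do not merge as $G$-orbits because they are distinguished by their rank distributions. Since equivalence classes of CSRD codes correspond bijectively to $G$-orbits of maximal minimum-rank-$2$ subspaces, and the only such subspaces are these three solids, I conclude that there are exactly three equivalence classes of $\Fq$-linear CSRD codes in $M_{3\times 3}(\Fq)$ of minimum distance $2$ for $q$ odd.

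I do not anticipate a genuine obstacle here: the statement is essentially a bookkeeping corollary of the geometric work. The one point requiring care is to confirm that \emph{no complete code of dimension strictly less than $4$ exists} for $q$ odd, so that the count is not accidentally larger; this is exactly what the non-existence of maximal lines and planes (Lemma \ref{NoCSRDLines}, Theorem \ref{maximalplanesqodd}) guarantees. The other subtle point is that completeness of a code is equivalent to maximality of its subspace \emph{within the symmetric setting}, and that distinct complete codes give distinct $G$-orbits; this is precisely the non-merging assertion of Theorem \ref{equivqoddgeom}, so the translation is clean. The proof is therefore a short citation-and-assembly argument rather than a new computation.
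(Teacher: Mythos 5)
Your proposal is correct and follows essentially the same route as the paper: the paper's proof of this theorem is a one-line citation of Theorem \ref{equivqoddgeom}, whose own proof rests on exactly the ingredients you assemble (Remark \ref{nomaxlinesandhyperplanes}, Theorem \ref{maximalplanesqodd}, the solid classification in Table \ref{solidsodd}, and the non-merging of $\Omega_{8,2}$, $\Omega_{14,2}$, $\Omega_{15,2}$ via rank distributions). Your added care in spelling out why no complete code of dimension less than $4$ exists is exactly what the paper leaves implicit in that citation, so the two arguments coincide.
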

 \begin{proof}
    This follows immediately from Theorem \ref{equivqoddgeom}.
 \end{proof}

 \begin{Remark}
     For $q$ odd, each $\Fq$-linear CSRD code of minimum distance $2$ in $M_{3\times 3}(\Fq)$ is an MSRD code.
 \end{Remark}

\begin{Theorem}\label{equivqevengeom}
For $q\geq 4$ even, there are $six$ $G$-orbits of subspaces in $\PG(8, q)$ that admit a symmetric representation which has minimum rank $2$, and is maximal with this property.
\end{Theorem}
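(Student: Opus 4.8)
The plan is to establish the six $G$-orbits by first identifying all the maximal subspaces of $\PG(5,q)$ of minimum rank $2$ (the \emph{symmetric} picture) and then determining how their $K$-orbits merge when passed into the ambient space $\PG(8,q)$ under the larger group $G$. From the results already proved in this section, the maximal subspaces of minimum rank $2$ in $\PG(5,q)$, $q\geq 4$ even, fall into two types: the three $K$-orbits of solids $\Omega_7$, $\Omega_{13}$, $\Omega_{14}$ (by Lemma~\ref{maxlineseven} and the classification of solids in \cite{solidsqeven}, these are precisely the solids of minimum rank $2$), together with the three $K$-orbits of \emph{maximal planes} $\Sigma_{\cN}$, $\Sigma_{16}$, $\Sigma_{18}$ that cannot be extended to a solid of minimum rank $2$ (by Theorem~\ref{CSRDplanesqeven}). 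This gives six candidate $K$-orbits in the symmetric space $\PG(5,q)$.

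The main work is then to show that these six $K$-orbits remain distinct as $G$-orbits in $\PG(8,q)$, i.e.\ that no two of them merge. First I would separate the three solids from the three planes: since $\dim$ is a $G$-invariant, a solid can never merge with a plane, so it suffices to argue within each dimension separately. For the three solids, the argument of Theorem~\ref{equivqoddgeom} carries over verbatim: the rank distributions of $\Omega_7$, $\Omega_{13}$, $\Omega_{14}$ (read off from Table~\ref{solidseven}) are pairwise distinct, and rank distribution is a $G$-invariant, so these three $K$-orbits cannot merge as $G$-orbits. For the three maximal planes, I would likewise compute their rank distributions from the point-orbit distributions already determined: $\Sigma_{\cN}=\pi_{\cN}$ has all $q^2+q+1$ nonzero points of rank $2$ (distribution $[0,q^2+q+1,0,0]$ in the symmetric labelling), $\Sigma_{16}$ has $[0,q+1,0,q^2]$ (Theorem~\ref{[0,q+1,0,q^2]}), and $\Sigma_{18}$ has $[0,1,0,q^2+q]$ (Theorem~\ref{[0,1,0,q^2+q]}). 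Collapsing the even-$q$ rank-$2$ refinement $(r_{2,n},r_{2,s})$ into a single rank-$2$ count $r_2=r_{2,n}+r_{2,s}$, which is what survives as a $G$-invariant in $\PG(8,q)$, gives the three totals $q^2+q+1$, $q+1$, and $1$; these are pairwise distinct for $q\geq 2$, so the three plane-orbits do not merge either.

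The subtle point — and where I expect the main obstacle to lie — is that the point-orbit distributions used above are $K$-invariants in $\PG(5,q)$ but are \emph{not} automatically $G$-invariants: the finer split of rank-$2$ points into nucleus points ($\cP_{2,n}$) and non-nucleus points ($\cP_{2,s}$) relies on the nucleus plane $\pi_{\cN}$, which is preserved by $K$ but need not be preserved by the full Segre stabiliser $G$. Consequently I must work only with the genuine $G$-invariant, the \emph{matrix rank} of each point, and verify that the coarse rank distribution (indexed by rank $1,2,3$ only) already separates all six orbits. The computation above shows this is the case, since the three solids have distinct \emph{total} rank-$2$ counts and the three planes have distinct total rank-$2$ counts $\{q^2+q+1,\,q+1,\,1\}$, all of which are preserved by $G$. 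It remains to confirm that each of these six symmetric orbits genuinely \emph{admits a symmetric representation with minimum rank $2$ that is maximal for this property} in the sense of \cite{lines}: maximality as a subspace of minimum rank $2$ is exactly the content of Lemma~\ref{maxlineseven} (solids are maximal, being of top appropriate dimension among minimum-rank-$2$ subspaces) and Theorem~\ref{CSRDplanesqeven} (the three planes are maximal). Assembling these pieces — six distinct symmetric $K$-orbits, each maximal of minimum rank $2$, with pairwise distinct $G$-invariant rank distributions within each dimension — yields exactly six $G$-orbits, completing the proof.
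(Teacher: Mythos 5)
Your proposal is correct and follows essentially the same route as the paper: identify the six maximal $K$-orbits of minimum rank $2$ (the solids $\Omega_7,\Omega_{13},\Omega_{14}$ via Lemma~\ref{maxlineseven} and Theorems~\ref{nonmaximalplaneseven}, \ref{CSRDplanesqeven}, and the planes $\Sigma_{\cN},\Sigma_{16},\Sigma_{18}$), then separate them as $G$-orbits by their rank distributions. Your extra care in collapsing the $K$-invariant point-orbit distributions $(r_{2,n},r_{2,s})$ to the coarse, genuinely $G$-invariant rank-$2$ count $r_2=r_{2,n}+r_{2,s}$ is exactly the correct reading of the paper's terse phrase ``completely identified by their rank distributions,'' so no gap remains.
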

\begin{proof}
If follows from Lemma \ref{maxlineseven}, Theorems  \ref{nonmaximalplaneseven} and \ref{CSRDplanesqeven} that solids in $\Omega_7\cup\Omega_{13}\cup \Omega_{14}$ and planes in $\Sigma_{\mathcal{N}}\cup \Sigma_{16}\cup \Sigma_{18}$ are the only maximal subspaces in $\PG(5,q)$ of minimum rank $2$. 
 These subspaces define six distinct $G$-orbits in $\PG(8,q)$, since they are completely identified by their rank distributions. 
\end{proof}

\begin{Theorem}
    There are $6$ equivalence classes of $\Fq$-linear CSRD codes in $M_{3\times 3}(\Fq)$ of minimum distance $2$ for $q\geq 4$ even, $3$ of which are  MSRD codes.
\end{Theorem}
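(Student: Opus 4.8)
The plan is to reduce this counting statement entirely to its geometric counterpart, Theorem~\ref{equivqevengeom}, which has already established that there are exactly six $G$-orbits of subspaces in $\PG(8,q)$ admitting a symmetric representation of minimum rank $2$ that is maximal with this property. The key observation is the dictionary set up in Section~\ref{geometricInt}: an $\Fq$-linear CSRD code in $M_{3\times 3}(\Fq)$ of minimum distance $2$ is precisely a subspace of $\PG(5,q)$ of minimum rank $2$ that is maximal with respect to inclusion among subspaces of minimum rank $2$, and linear equivalence of such codes corresponds to equivalence of the associated subspaces under the group $G$ stabilising the Segre variety, viewed in the ambient $\PG(8,q)$.

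First I would invoke the classification of maximal subspaces of minimum rank $2$ assembled in the proof of Theorem~\ref{equivqevengeom}: by Lemma~\ref{maxlineseven}, every line of minimum rank $2$ extends to a solid of minimum rank $2$, so no line is maximal; by Theorems~\ref{nonmaximalplaneseven} and~\ref{CSRDplanesqeven}, the only maximal planes are those in $\Sigma_{\mathcal{N}}\cup\Sigma_{16}\cup\Sigma_{18}$; and the only solids of minimum rank $2$ (which are automatically maximal, being of maximal dimension among minimum-rank-$2$ subspaces) are those in $\Omega_7\cup\Omega_{13}\cup\Omega_{14}$. Thus the complete list of maximal minimum-rank-$2$ subspaces of $\PG(5,q)$ consists of the three plane-orbits and the three solid-orbits, six $K$-orbits in all. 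Theorem~\ref{equivqevengeom} shows these give six distinct $G$-orbits in $\PG(8,q)$, since the six families are separated by their rank distributions, which are $G$-invariants.

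I would then translate this back to codes: each of the six $G$-orbits corresponds to exactly one equivalence class of $\Fq$-linear CSRD codes of minimum distance $2$, and conversely every such code's associated subspace is maximal of minimum rank $2$ and hence falls into one of these orbits. This yields the count of $6$. To identify the MSRD codes among them, I would recall from Section~\ref{Subd=2} that an $\Fq$-linear MSRD code of minimum distance $2$ corresponds to a solid of $\PG(5,q)$ of minimum rank $2$ (the solid being the maximal possible dimension consistent with the bound~(\ref{eqn:dim_bound})); the three solid-orbits $\Omega_7,\Omega_{13},\Omega_{14}$ are exactly these, while the three plane-orbits $\Sigma_{\mathcal{N}},\Sigma_{16},\Sigma_{18}$ give proper (three-dimensional) CSRD codes that are not MSRD. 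Hence $3$ of the $6$ classes are MSRD codes.

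The statement is essentially a corollary, so there is no serious technical obstacle; the only point requiring care is ensuring the equivalence dictionary is applied correctly, namely that equivalence of codes matches $G$-equivalence in $\PG(8,q)$ rather than mere $K$-equivalence in $\PG(5,q)$. This matters because a priori two $K$-inequivalent subspaces of $\PG(5,q)$ could merge into a single $G$-orbit after embedding; the content of Theorem~\ref{equivqevengeom} is precisely that no such merging occurs here, which is guaranteed by the fact that the six orbits are distinguished by the $G$-invariant rank distribution. I would therefore state the proof as a direct consequence of Theorem~\ref{equivqevengeom} together with the identification of the three solid-orbits as the MSRD codes.
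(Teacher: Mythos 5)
Your proposal is correct and follows essentially the same route as the paper: the paper's proof is exactly a citation of Theorem~\ref{equivqevengeom} (whose proof rests on Lemma~\ref{maxlineseven} and Theorems~\ref{nonmaximalplaneseven} and~\ref{CSRDplanesqeven}, with the six $K$-orbits kept distinct as $G$-orbits via their rank distributions), together with the observation that the solids in $\Omega_7\cup\Omega_{13}\cup\Omega_{14}$ are precisely the MSRD codes. Your additional care about $K$-equivalence versus $G$-equivalence is exactly the content the paper places in Theorem~\ref{equivqevengeom}, so nothing is missing.
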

\begin{proof}
    Follows by Theorem \ref{equivqevengeom}. Note that, solids in $\Omega_7\cup\Omega_{13}\cup \Omega_{14}$ correspond to the $\Fq$-linear MSRD codes of minimum distance $2$ in $M_{3\times 3}(\Fq)$.
\end{proof}

We end this section by summarizing the explicit correspondence between $\Fq$-linear CSRD codes in $M_{3\times 3}(\Fq)$ with $d=2$  and their equivalent types of pencils, nets, and webs of conics in $\PG(2,q)$. Remark that, Corollary \ref{C3.18} is a consequence of the correspondence between lines in $o_{j,2}$ and solids in $\Omega_{j,2}$ for $j\in \{8,14,15\}$ and $q$ odd.

\begin{Corollary}\label{pencilintrepqodd}
 For $q$ odd, $\Fq$-linear CSRD codes in $M_{3\times 3}(\Fq)$ of minimum distance $2$ correspond to pencils of conics in $\PG(2,q)$ with conic distributions $[1,0,1,q-1]$, $[0,1,2,q-2]$ and $[0,0,1,q]$. \end{Corollary}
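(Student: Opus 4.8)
The plan is to translate the three $K$-orbits of solids of minimum rank $2$ in $\PG(5,q)$, namely $\Omega_{8,2}$, $\Omega_{14,2}$ and $\Omega_{15,2}$, into the language of pencils of conics and then read off their conic distributions. Recall from Theorem \ref{equivqoddgeom} that these are exactly the solids giving rise to $\Fq$-linear CSRD codes with $d=2$ for $q$ odd, and that each such code is in fact an MSRD code. Since a solid in $\PG(5,q)$ corresponds to a pencil of conics in $\PG(2,q)$ under the $\PGL(3,q)$/$K$ correspondence of Section \ref{orbitsofsystems}, and since the conic distribution of a pencil is precisely the hyperplane-orbit distribution $OD_4(S)=[h_1,h_{2,r},h_{2,i},h_3]$ of the associated solid $S$, the whole task reduces to computing $OD_4$ for a representative of each of the three orbits.

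The key computational step is as follows. For each of $\Omega_{8,2}$, $\Omega_{14,2}$, $\Omega_{15,2}$ I would take the matrix representative from Table \ref{solidsodd} (the solids table for $q$ odd). A solid $S$ is a $3$-dimensional subspace, and the $q+1$ hyperplanes through it correspond under $\delta$ (see (\ref{eqn:delta})) to the $q+1$ conics in the pencil $\mathcal{N}(S)$ generated by the two defining forms. Concretely, I would write the pencil as $\lambda f + \mu g$ for two quadratic forms $f,g$ read off from the solid representative, and then classify each member as a double line ($\cH_1$), a real line-pair ($\cH_{2,r}$), an imaginary line-pair ($\cH_{2,i}$), or a non-singular conic ($\cH_3$) by computing the rank and the discriminant/splitting type of the associated symmetric $3\times 3$ matrix over $\Fq$. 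Counting how many of the $q+1$ values of $(\lambda:\mu)\in\PG(1,q)$ fall into each type yields the tuple. The claim is that the three solids give $[1,0,1,q-1]$, $[0,1,2,q-2]$ and $[0,0,1,q]$ respectively; matching these three distinct distributions to the three orbits (in the order stated, or in whatever order the representatives dictate) completes the identification.

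The one subtle point, flagged in the paragraph preceding the corollary, is that for $q$ odd the correspondence passes through lines rather than solids: by the remark that "the $K$-orbits of solids correspond to the $K$-orbits of lines in $\PG(5,q)$" (Section \ref{orbitsofsystems}), and by the cited correspondence between lines in $o_{j,2}$ and solids in $\Omega_{j,2}$ for $j\in\{8,14,15\}$, the conic distribution of the pencil attached to $\Omega_{j,2}$ can be extracted from the already-tabulated data for the corresponding line orbit $o_{j,2}$ in Table \ref{solidsodd} rather than recomputed from scratch. Thus I would invoke Corollary \ref{C3.18} (or the line/solid data directly) instead of redoing the discriminant analysis. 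The step I expect to require the most care is the splitting classification of singular pencil members over $\Fq$ versus $\Fq^2$: distinguishing a real line-pair from a conjugate imaginary line-pair depends on whether a certain discriminant is a square in $\Fq$, and getting the exact counts $1,2,q-2$ etc. right (rather than off by the number of repeated/double members) is where an error would most naturally creep in. Since this data is already encoded in the orbit tables of \cite{lines} and the solids classification of \cite{webs}, I would lean on those tabulated point- and hyperplane-orbit distributions to pin down the counts rather than carrying out the case analysis by hand.
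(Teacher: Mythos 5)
Your proposal is correct and follows essentially the same route as the paper: by Theorem \ref{equivqoddgeom} the codes in question are exactly the solids in $\Omega_{8,2}\cup\Omega_{14,2}\cup\Omega_{15,2}$, the associated linear system of a solid is a pencil whose conic distribution is the hyperplane-orbit distribution $OD_4(S)$, and these distributions are precisely $[1,0,1,q-1]$, $[0,1,2,q-2]$ and $[0,0,1,q]$, readable from the column $OD_4(S)=OD_0(L)$ of Table \ref{solidsodd} or recomputable from the representatives as you describe. One correction, though: do not invoke Corollary \ref{C3.18} here — that corollary records the \emph{web} distributions $OD_4(L)=OD_0(S)$ (tuples such as $[0,q^2+\tfrac{3q+1}{2},\tfrac{q+1}{2},q^3-q]$), which are the other column of the table; following that citation literally would substitute the wrong tuples, whereas your direct discriminant computation and your fallback to the line/solid data $OD_0(L)$ are the correct sources.
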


\begin{Corollary}\label{pencilintrepqeven}
 For $q\geq 4$ even, $\Fq$-linear CSRD codes in $M_{3\times 3}(\Fq)$ of minimum distance $2$ correspond to pencils of conics in $\PG(2,q)$ with conic distributions $[1,0,1,q-1]$, $[0,1,2,q-2]$ and $[0,0,1,q]$ and nets of conics in $\PG(2,q)$ with conic distributions $[q+1,0,0,q^2]$, $[1,0,0,q^2+q]$ and $[q^2+q+1,0,0,0]$.
  
\end{Corollary}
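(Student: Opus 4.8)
The plan is to read off all six conic distributions from the geometric classification already established, using the correspondence of Section \ref{orbitsofsystems} between $K$-orbits of subspaces of $\PG(5,q)$ and $\PGL(3,q)$-orbits of linear systems of conics. Under that correspondence the conic distribution of the linear system associated with a subspace $W$ is exactly its hyperplane-orbit distribution $OD_4(W)$, and dimension is reversed: solids of $\PG(5,q)$ correspond to pencils (lying on $q+1$ hyperplanes) and planes correspond to nets (lying on $q^2+q+1$ hyperplanes). So the task reduces to computing $OD_4$ for the six representative subspaces.

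First I would recall from Theorem \ref{equivqevengeom} that, for $q\geq 4$ even, the six $G$-orbits of CSRD codes of minimum distance $2$ are represented by the three solids in $\Omega_7\cup\Omega_{13}\cup\Omega_{14}$ (the MSRD codes) and the three planes in $\Sigma_{\mathcal{N}}\cup\Sigma_{16}\cup\Sigma_{18}$. By the correspondence above, the solids yield pencils of conics and the planes yield nets of conics, so it remains to determine the six hyperplane-orbit distributions.

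For the three nets, two are already in hand: Theorem \ref{[0,q+1,0,q^2]} gives $OD_4(\Sigma_{16})=[q+1,0,0,q^2]$ and Theorem \ref{[0,1,0,q^2+q]} gives $OD_4(\Sigma_{18})=[1,0,0,q^2+q]$. For the nucleus plane I would invoke Theorem \ref{OnePointInN}: since every point of $\pi_{\mathcal{N}}$ has rank two and lies in $\pi_{\mathcal{N}}$, we have $r_{2,n}(\pi_{\mathcal{N}})=q^2+q+1$, and that theorem then gives $h_1(\pi_{\mathcal{N}})=q^2+q+1$. As a plane of $\PG(5,q)$ lies on exactly $\gaussbin{3}{1}=q^2+q+1$ hyperplanes, every hyperplane through $\pi_{\mathcal{N}}$ must belong to $\cH_1$, whence $OD_4(\pi_{\mathcal{N}})=[q^2+q+1,0,0,0]$. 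These are precisely the three net conic distributions in the statement.

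For the three pencils I would read the hyperplane-orbit distributions of $\Omega_7$, $\Omega_{13}$ and $\Omega_{14}$ from the solid representatives recorded in Table \ref{solidseven} (equivalently, from the classification in \cite{solidsqeven}), counting for each solid how many of its $q+1$ hyperplanes meet $\cV(\Fq)$ in a conic, a pair of real lines, a conjugate imaginary pair, or a quartic; this produces the three distributions $[1,0,1,q-1]$, $[0,1,2,q-2]$ and $[0,0,1,q]$. The main obstacle is exactly this last count: in contrast to the net case, where the invariants drop out of Theorems \ref{[0,q+1,0,q^2]}, \ref{[0,1,0,q^2+q]} and \ref{OnePointInN}, the pencil distributions are not proved elsewhere in the excerpt and must be extracted from the solid classification. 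Once the six $OD_4$ values are assembled, the corollary is immediate, since the conic distribution of a linear system equals the hyperplane-orbit distribution of its associated subspace.
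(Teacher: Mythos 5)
Your proposal is correct and follows essentially the same route as the paper's (implicit) derivation: Theorem \ref{equivqevengeom} supplies the six orbits, the conic distribution of a linear system equals the hyperplane-orbit distribution $OD_4$ of its associated subspace (solids giving pencils, planes giving nets), and the six values are read off from Theorems \ref{[0,q+1,0,q^2]} and \ref{[0,1,0,q^2+q]}, the nucleus-plane argument via Theorem \ref{OnePointInN}, and the solid classification. The one point to adjust is that the pencil distributions $[1,0,1,q-1]$, $[0,1,2,q-2]$, $[0,0,1,q]$ for $\Omega_7$, $\Omega_{13}$, $\Omega_{14}$ are not a remaining obstacle: they are already tabulated as $OD_4(S)$ in Table \ref{solidseven} (imported from \cite{solidsqeven,webs}), which is exactly the source the paper relies on.
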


 \begin{Corollary}\label{C3.18}

  For $q$ odd, $\Fq$-linear CSRD codes in $M_{3\times 3}(\Fq)$ of minimum distance $2$ correspond to webs of conics in $\PG(2,q)$ with conic distributions $[0,q^2+\frac{3q+1}{2},\frac{q+1}{2},q^3-q]$, $[0,\frac{q^2+1}{2}+2q,\frac{q^2+1}{2}+q,q^3-2q]$ and $[0,\frac{q^2+1}{2}+q,\frac{q^2+1}{2},q^3]$. Moreover, these are the only webs of conics in $\PG(2,q)$, $q$ odd,  that are not of rank one.
\end{Corollary}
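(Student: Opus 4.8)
The plan is to transport everything through the polarity $\perp$ of $\PG(5,q)$ that exists for $q$ odd and interchanges conic planes and tangent planes of $\cV(\Fq)$ (see \cite[Theorem 4.25]{galois geometry}); concretely one may take the polarity induced by the trace pairing $\langle M,N\rangle=\mathrm{tr}(MN)$ on symmetric matrices, which is exactly the identification underlying the map $\delta$ of (\ref{eqn:delta}). Although this pairing is not $K$-invariant, it intertwines the action $M\mapsto AMA^T$ with its inverse-transpose twist $M\mapsto (A^{-1})^TM A^{-1}$, so $\perp$ normalises $K$ and therefore carries $K$-orbits of solids to $K$-orbits of lines; as recorded before the statement, it matches the solid orbit $\Omega_{j,2}$ with the line orbit $o_{j,2}$ for $j\in\{8,14,15\}$. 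A short computation shows that under $\delta$ a hyperplane in $\cH_t$ corresponds to the point whose matrix has the same rank and the same discriminant type, so $\perp$ identifies $\cH_1,\cH_3$ with $\mathcal{P}_1,\mathcal{P}_3$ and the two rank-two hyperplane orbits with the two rank-two point orbits. Since the hyperplanes through a line $\ell=S^\perp$ biject with the points of $S$ via $H\mapsto H^\perp$, this gives the identity $OD_4(\ell)=OD_0(S)$ after the relabeling.

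With this in hand I first recall from Theorem \ref{equivqoddgeom} that, for $q$ odd, the CSRD codes of minimum distance $2$ are exactly the solids in $\Omega_{8,2}\cup\Omega_{14,2}\cup\Omega_{15,2}$, so under $\perp$ they correspond to the webs $o_{8,2}\cup o_{14,2}\cup o_{15,2}$. I then read off the point-orbit distributions $OD_0(\Omega_{j,2})$ from Table \ref{solidsodd} and transport them to $OD_4(o_{j,2})$ by the identity above, which produces the three listed tuples. As a consistency check each tuple sums to $q^3+q^2+q+1$, the number of hyperplanes of $\PG(5,q)$ through a line, and each has first entry $0$ precisely because a minimum-rank-two solid contains no rank-one point.

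For the \emph{moreover} assertion, a web is of rank one exactly when its linear system contains a double line, that is, when $h_1>0$; by the polarity $h_1(\ell)=r_1(\ell^\perp)$, so a web fails to be of rank one exactly when its polar solid has no rank-one point. It therefore suffices to show that a solid with $r_1=0$ has minimum rank $2$, hence lies in $\Omega_{8,2}\cup\Omega_{14,2}\cup\Omega_{15,2}$. This is immediate from the classification in Table \ref{solidsodd}, these three being the only solid orbits with $r_1=0$; conceptually it also follows because no solid can have minimum rank $3$, since $\cV^{(2)}(\Fq)=\cZ(\det M)$ is a cubic hypersurface and restricting $\det$ to a solid gives a cubic form in four variables, which by Chevalley--Warning has a nontrivial zero, so every solid meets $\cV^{(2)}(\Fq)$. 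Hence the webs that are not of rank one are exactly $o_{8,2}\cup o_{14,2}\cup o_{15,2}$, and the corollary follows. The only genuine obstacle is the bookkeeping in the last part of the first paragraph: one must fix, on an explicit representative, whether $\perp$ sends $\cH_{2,r}$ to $\mathcal{P}_{2,e}$ or to $\mathcal{P}_{2,i}$, so that the middle two entries of each conic distribution are assigned correctly rather than swapped.
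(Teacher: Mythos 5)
Your proposal is correct and follows essentially the same route as the paper, which derives the corollary from the correspondence (via the $q$-odd polarity) between the solid orbits $\Omega_{j,2}$ and the line orbits $o_{j,2}$, $j\in\{8,14,15\}$, together with the orbit-distribution data $OD_0(S)=OD_4(L)$ recorded in Table \ref{solidsodd}; note that the table's ordered-tuple equality already settles the $\cH_{2,r}/\cH_{2,i}$ versus $\mathcal{P}_{2,e}/\mathcal{P}_{2,i}$ bookkeeping you flag at the end. Your Chevalley--Warning argument that no solid can have minimum rank $3$ is a nice self-contained touch, but it is an elaboration within the same approach rather than a different one, since the table inspection you also cite suffices.
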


\subsection{Minimum distance $d=3$}\label{Subd=3}

In this section we classify $\Fq$-linear CSRD codes in $M_{3\times 3}(\Fq)$ of minimum distance $3$. We will see that each such CSRD code is in fact an MRD code. This means that this case is closely related to Menichetti's proof of a conjecture by Kaplanski concerning finite semifields, as explained in the following remark. 

\begin{Remark}\label{rem:Menichetti}
Note that MSRD codes of minimum distance $3$ in $M_{3\times 3}(\Fq)$ have dimension $3$, and are therefore also MRD codes. The study of the geometry related to MRD codes,  used in this paper, predates the study of MRD codes, and goes back to papers \cite{Menichetti1977,Menichetti1996} by Menichetti from the 1970's and possibly earlier, which were set in the context of the study of finite semifields (non-associative algebras). MRD codes of minimum distance $n$ in $M_{n\times n}(\Fq)$ correspond to $n$-dimensional semifields, and the equivalence classes of MRD codes correspond to the isotopism classes of semifields, see e.g. \cite{Lavrauw2008,Lunardon2017}. In 1977 Menichetti proved that a 3-dimensional semifield is either a field or isotopic to a twisted field. This essentially classifies MRD codes in $M_{3\times 3}(\Fq)$. It remains to determine which of the $G$-orbits of planes of minimum rank 3 (and therefore constant rank 3) in $\PG(8,q)$ have a symmetric representative. 
Somewhat similar to Menichetti's approach are the two papers on symplectic semifield spreads of $\PG(5, q^2)$, \cite{CPeven} for $q $ even and  \cite{GPodd} for $q$ odd, which also contain the classification of $K$-orbits of planes of minimum rank 3.
\end{Remark}

 In what follows, we give a classification of CSRD codes of minimum distance $3$ in $M_{3\times 3}(\Fq)$, without relying on the results mentioned in Remark \ref{rem:Menichetti}.
By \cite{lines}, the lines in $\PG(5, q)$ of constant rank-$3$ points form a unique $K$-orbit, denoted by $o_{17}$. Consequently, each line in $o_{17}$ lies within a plane $\pi$ of minimum rank 3. Therefore, there are no $2$-dimensional $\Fq$-linear CSRD codes with minimum distance $3$ in $M_{3\times 3}(\Fq)$, and hence each $\Fq$-linear CSRD code in $M_{3 \times 3}(\Fq)$ with minimum distance $3$ is an MSRD code.  
Note that by \cite{SymmMRD}, these codes are perfect, establishing a correspondence between nets of conics in $\PG(2,q)$ with no singular conics and non-trivial perfect $\Fq$-linear MSRD codes in $M_{3\times 3}(\Fq)$.\\

\begin{Theorem}\label{thm:cst3}
If $q> 2$ is even then there is unique $K$-orbit of planes of minimum rank 3 in $\PG(5,q)$, while if $q$ is odd there are two such $K$-orbits.
Each such $K$-orbit has hyperplane-orbit distribution $[0,0,0,q+1]$.
\end{Theorem}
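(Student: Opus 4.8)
The plan is to establish the hyperplane-orbit distribution first and then count the $K$-orbits. Let $\pi$ be a plane of minimum rank $3$; since any symmetric $3\times 3$ matrix has rank at most $3$, every point of $\pi$ has rank exactly $3$. Exactly $q^2+q+1$ hyperplanes pass through a plane of $\PG(5,q)$, so it suffices to show $h_1(\pi)=h_{2,r}(\pi)=h_{2,i}(\pi)=0$. If $\pi\subset H$ with $H\in\cH_1\cup\cH_{2,r}$, then $H$ contains a conic plane $\langle\cC\rangle$, every point of which has rank $\le 2$; as two planes inside the hyperplane $H\cong\PG(4,q)$ always meet, $\pi$ would contain a point of rank $\le 2$, contradicting constant rank $3$. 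For $H\in\cH_{2,i}$ I would extend to $\mathbb{F}_{q^2}$: there the rank-$\le 2$ points of $H\otimes\mathbb{F}_{q^2}$ lie on the two conjugate conic planes $\rho,\rho^q$ (as in the proof of Theorem \ref{[0,1,0,q^2+q]}), and $\pi\otimes\mathbb{F}_{q^2}$ meets $\rho$ in at least a point $X$. If $X$ is $\Fq$-rational it is a rank-$\le 2$ point of $\pi$; otherwise $\langle X,X^q\rangle$ is an $\Fq$-line $m\subset\pi$ on which the determinant restricts to a cubic having the conjugate pair $X,X^q\in\mathbb{F}_{q^2}\setminus\Fq$ among its roots, forcing its third root into $\Fq$ and hence producing a rank-$\le 2$ point of $m\subset\pi$. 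Either way we contradict constant rank $3$, so $OD_4(\pi)=[0,0,0,q^2+q+1]$; equivalently the net $\cN(\pi)$ has only non-singular conics.

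For the orbit count I would restrict the determinant to $\pi$. Since $\pi\cap\cV^{(2)}(\Fq)=\emptyset$, the ternary cubic $F=\det|_\pi$ has no zero in $\PG(2,q)$. A smooth plane cubic over $\Fq$ has a rational point by the Hasse--Weil bound, an absolutely irreducible singular cubic has a (necessarily rational) unique singular point, and every $\Fq$-reducible cubic has a rational point on one of its $\Fq$-components; hence the only zero-free possibility is that $F$ is, up to scalar, a product of three conjugate linear forms, i.e. the norm form of $\mathbb{F}_{q^3}$, and $\cZ(F)=\pi\cap\cV^{(2)}$ is over $\overline{\mathbb{F}}_q$ a triangle of three conjugate lines. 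Its three vertices are the singular points of this plane cubic, hence points of rank $\le 2$, and Frobenius permutes them in a $3$-cycle. Because $\cV$ contains no line and rank is Frobenius-invariant, $\pi\cap\cV$ computed over $\overline{\mathbb{F}}_q$ is either all three vertices or none, which splits the constant-rank-$3$ planes into two types.

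In the first (\emph{field}) type the vertices $V_1,V_2,V_3$ lie on $\cV$, so $\pi=\langle V_1,V_2,V_3\rangle$ is spanned by a conjugate triple of points of $\cV(\mathbb{F}_{q^3})$ in general position. Here I would show $K$ is transitive: the rational pointwise stabiliser of such a triple is a Coxeter (Singer-type) torus of order $q^2+q+1$, its setwise stabiliser adds only the Frobenius $3$-cycle (order $3(q^2+q+1)$), and comparing this with the count $\tfrac{1}{3}q^3(q-1)^2(q+1)$ of such triples via orbit--stabiliser gives a single $K$-orbit for every $q$. In the second (\emph{twisted}) type the vertices have rank $2$ and $\pi\cap\cV=\emptyset$ even over $\overline{\mathbb{F}}_q$; I would exhibit an explicit symmetric representative and argue that these planes form one $K$-orbit whenever they occur.

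The crux — and the step I expect to be hardest — is to prove that the twisted type is non-empty precisely when $q$ is odd. I would reduce the existence of such a plane to the solvability over $\Fq$ of a quadratic (square-class) condition arising from realising a symmetric determinantal representation of the conjugate-triangle cubic whose three tritangency points have rank $2$ rather than rank $1$; the required square root is available exactly in odd characteristic (geometrically, this is the shadow of the fact that proper commutative semifields of order $q^3$ exist only for $q$ odd). Granting this, the field type contributes one orbit for all $q$ and the twisted type a second orbit only for $q$ odd, yielding one $K$-orbit for $q>2$ even and two for $q$ odd, each with hyperplane-orbit distribution $[0,0,0,q^2+q+1]$ as established above. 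The remaining delicate points are the single-orbit claim for the twisted type and the precise form of the square-class obstruction governing its existence.
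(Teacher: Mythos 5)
Your first two steps are correct and essentially follow the paper's own route: the hyperplane-orbit distribution (your value $[0,0,0,q^2+q+1]$ is the right one; the $[0,0,0,q+1]$ in the statement is a typo, since a plane of $\PG(5,q)$ lies in $q^2+q+1$ hyperplanes) is obtained by forcing a rank-$\le 2$ point whenever $\pi$ lies in a hyperplane of $\cH_1\cup\cH_{2,r}\cup\cH_{2,i}$, and the reduction via the determinantal cubic $F=\det|_\pi$ with no rational zero to a triangle of three conjugate lines, whose vertices either all lie on $\cV(\bF_{q^3})$ or all have rank $2$, is exactly the paper's dichotomy into the ``field'' and ``twisted'' types. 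Your orbit--stabiliser computation for the field type (setwise stabiliser of order $3(q^2+q+1)$ against $\frac13 q^3(q-1)^2(q+1)$ conjugate triples) is sound and matches the paper's transitivity claim.

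However, the theorem's actual content --- one orbit for $q>2$ even versus two for $q$ odd --- is precisely the part you do not prove. You assert that the twisted type occurs exactly for $q$ odd by appeal to an unspecified ``square-class condition'' and to the fact that proper commutative semifields of order $q^3$ exist only in odd characteristic; the latter is, via the semifield/constant-rank-$3$ correspondence, essentially equivalent to the statement being proved, so this is circular as it stands, and you yourself flag both this and the single-orbit claim for the twisted type as unresolved. The paper closes both gaps with one geometric observation you are missing: at a rank-$2$ vertex $Q$ of the triangle, the tangent hyperplane $H$ to $\cV^{(2)}(\bF_{q^3})$ at $Q$ contains the two sides of the triangle through $Q$, hence contains $\pi(q^3)$, and $H\in\cH_1(q^3)$; therefore every twisted-type plane is the intersection of three conjugate hyperplanes of $\cH_1(q^3)$. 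For $q$ even every hyperplane of $\cH_1(q^3)$ contains the nucleus plane, so this intersection would force $\pi=\pi_{\cN}$, which has minimum rank $2$ --- the twisted type is empty. For $q$ odd, existence follows by applying the polarity interchanging conic planes and tangent planes to a field-type plane (legitimate precisely because its hyperplane-orbit distribution is $[0,0,0,q^2+q+1]$), and transitivity on the twisted type follows by duality from transitivity on the field type. Without this mechanism, or a genuine substitute for it, your proposal establishes the two possible types but not the theorem.
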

\begin{proof}
Consider a fixed cubic extension $\bF_{q^3}$ of $\Fq$. If $P$ is a point of $\PG(2,q^3)\setminus \PG(2,q)$ which is not contained in a secant line to $\PG(2,q)$, then the points $P$, $P^q$, and $P^{q^2}$ form a triangle in $\PG(2,q^3)$, and their images under the Veronese map generate a plane $\pi(q^3)$ of type $\Sigma_2$ in $\PG(5,q^3)$ (cf \cite{nets}). Since $\pi(q^3)$ is fixed by the projectivity $\sigma$ of $\PG(5,q^3)$ induced by the Frobenius automorphism $a\mapsto a^q$ of $\bF_{q^3}$, it follows that $\pi(q^3)$ intersects the subgeometry $\PG(5,q)$ in a plane $\pi$. The plane $\pi$ has minimum rank 3, since the points of rank $\leq 2$ in the plane $\pi(q^3)$ lie on the triangle with vertices $\nu_3(P)$, $\nu_3(P^q)$, and $\nu_3(P^{q^2})$, and none of these points is fixed by $\sigma$.
Moreover, since the group $\PGL(3,q)$ acts transitively on the set of points of $\PG(2,q^3)\setminus \PG(2,q)$ which are not contained in a secant line of $\PG(2,q)$, it follows that the group $K$ acts transitively on these planes. Let $\Sigma_{{\mathrm{GF}}(q^3)}$  denote the $K$-orbit in $\PG(5,q)$ of such planes.
Observe that, since a plane $\pi\in\Sigma_{{\mathrm{GF}}(q^3)}$ contains no points of rank $\leq 2$, a hyperplane of $\PG(5,q)$ containing $\pi$ cannot contain a conic plane, a tangent plane, or the nucleus plane (if $q$ is even). Therefore, each hyperplane containing $\pi$ belongs to $\cH_3$.

Suppose $q$ is odd, and let $\rho$ denote the polarity of $\PG(5, q)$ that maps the conic planes of $\cV(\Fq)$ onto the tangent planes of $\cV(\Fq)$. Since $OD_4(\Sigma_{{\mathrm{GF}}(q^3)})=[0,0,0,q^2+q+1]$, it follows that the image of $\pi\in\Sigma_{{\mathrm{GF}}(q^3)}$ under $\rho$ is a plane $\pi^\rho$ of minimal rank 3 in $\PG(5,q)$. Let $\Sigma_{{\mathrm{TF}}(q^3)}$ denote the $K$-orbit containing $\pi^\rho$ in $\PG(5,q)$.
Observe that, if $\pi(q^3)$ is the plane in $\PG(5,q^3)$ as described above, then $\rho$ maps the three points $\nu_3(P)$, $\nu_3(P^q)$, and $\nu_3(P^{q^2})$ to three hyperplane $H$, $H^q$, $H^{q^2}$ $\in \cH_1(q^3)$. These three hyperplanes are the images under $\delta$ (defined in (\ref{eqn:delta})) of the three sides $\ell$, $\ell^q$, $\ell^{q^2}$ of a triangle in $\PG(2,q^3)$ (considered as double line conics).

Thus far, we have constructed two $K$-orbits of planes $\Sigma_{{\mathrm{GF}}(q^3)}$  and $\Sigma_{{\mathrm{TF}}(q^3)}$ in $\PG(5,q)$. A plane in $\Sigma_{{\mathrm{GF}}(q^3)}$  is the span of three conjugate points $\nu_3(P)$, $\nu_3(P^q)$, and $\nu_3(P^{q^2})$ in $\cV_3(\bF_{q^3})\setminus \cV_3(\bF_{q})$, and a plane in $\Sigma_{{\mathrm{TF}}(q^3)}$ is the intersection of three conjugate hyperplanes $H$, $H^q$, $H^{q^2}$ $\in \cH_1(q^3)$. Note that the $K$-orbit $\Sigma_{{\mathrm{TF}}(q^3)}$ consists of planes of minimum rank 3 only when $q$ is odd, since if $q$ is even, each of the hyperplanes in $\cH_1(q^3)$ contains the nucleus plane $\pi_N$.

In the remainder of the proof we show that, besides $\Sigma_{{\mathrm{GF}}(q^3)}$  and $\Sigma_{{\mathrm{TF}}(q^3)}$, there are no other $K$-orbits of planes of minimum rank 3 in $\PG(5,q)$.

Consider any plane $\pi$ in $\PG(5,q)$ and suppose $\pi$ has minimum rank 3. Let $f\in \bF_q[X,Y,Z]$ be the cubic form defined by the discriminant of the net of conics corresponding to $\pi$ (cf. \cite{nets}). Since $\pi$ has minimum rank 3, it follows that $\cZ(f)$ has no $\bF_q$-rational points, and therefore $\cZ(f)$ is not absolutely irreducible (see e.g. \cite{hirsch}). Hence $\cZ(f)$ is the union of three conjugate lines $L$, $L^q$, $L^{q^2}$ forming a triangle in the plane $\pi(q^3)$ in $\PG(5,q^3)$. Since points of rank one are necessarily singular points of $\cZ(f)$ it follows that $\pi(q^3)$ has either 3 points of rank one (the vertices $Q$, $Q^q$, $Q^{q^2}$ of the triangle $L$, $L^q$, $L^{q^2}$) or no points of rank one. In the first case the plane $\pi$ belongs to $\Sigma_{{\mathrm{GF}}(q^3)}$ . So, suppose $\pi(q^3)$ has no points of rank one.  Let $H$ be the tangent hyperplane to $\cV^{(2)}(\bF_{q^3})$ at $Q$. Then $H$ contains the lines contained in $\cV^{(2)}(\bF_{q^3})$ through $Q$, and therefore $H$ contains $\pi(q^3)$. Hence in this case $\pi(q^3)$ is the intersection of three conjugate hyperplanes $H$, $H^q$, $H^{q^2}$ $\in \cH_1(q^3)$ in $\PG(5,q^3)$, i.e. $\pi \in \Sigma_{{\mathrm{TF}}(q^3)}$.
\end{proof}

\begin{Theorem}\label{Maind=3}
For $q>2$ even, there is a unique  equivalence class  of non-trivial perfect $\Fq$-linear MSRD codes in $M_{3\times 3}(\Fq)$, while for $q$ odd, there are two such equivalence classes.
\end{Theorem}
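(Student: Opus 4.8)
The plan is to translate the geometric classification of Theorem \ref{thm:cst3} directly into the language of codes. Recall that a plane of minimum rank $3$ in $\PG(5,q)$ corresponds to a $3$-dimensional $\Fq$-linear SRD code in $M_{3\times 3}(\Fq)$ of minimum distance $3$, and that (by the dimension bound (\ref{eqn:dim_bound}) with $n=3$, $d=3$, so $n-d=0$ even, giving $\dim \leq 3$) such a code meets the MSRD bound and is therefore an MSRD code. As noted in the discussion preceding Theorem \ref{thm:cst3}, the $K$-orbit $o_{17}$ of constant-rank-$3$ lines guarantees every such line extends to a plane of minimum rank $3$, so there are no $2$-dimensional CSRD codes of minimum distance $3$; hence every CSRD code of minimum distance $3$ is in fact an MSRD code, and these are exactly the $3$-dimensional codes whose associated plane has minimum rank $3$.

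First I would invoke Theorem \ref{thm:cst3} to assert that the planes of minimum rank $3$ in $\PG(5,q)$ form a single $K$-orbit ($\Sigma_{{\mathrm{GF}}(q^3)}$) when $q>2$ is even, and two $K$-orbits ($\Sigma_{{\mathrm{GF}}(q^3)}$ and $\Sigma_{{\mathrm{TF}}(q^3)}$) when $q$ is odd. Next I would pass from $K$-equivalence in $\PG(5,q)$ to $G$-equivalence in the embedded space $\PG(8,q)$, i.e. to the coding-theoretic equivalence of the associated codes. The key point is that the two $K$-orbits in the odd case do not merge into a single $G$-orbit: this is precisely the even-versus-odd distinction already flagged, and it can be settled by a rank-distribution or invariant argument exactly as in the proofs of Theorems \ref{equivqoddgeom} and \ref{equivqevengeom}. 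Since the codes are MSRD (equivalently, perfect by \cite{SymmMRD}), I would finally record the correspondence: non-trivial perfect $\Fq$-linear MSRD codes in $M_{3\times 3}(\Fq)$ are in bijection with nets of conics in $\PG(2,q)$ having no singular conics, so counting their equivalence classes amounts to counting the $K$-orbits of Theorem \ref{thm:cst3}.

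Concretely, the argument runs: a non-trivial perfect $\Fq$-linear MSRD code of minimum distance $3$ corresponds to a plane $\pi\subset\PG(5,q)$ of minimum rank $3$; by Theorem \ref{thm:cst3} these fall into one $K$-orbit for $q>2$ even and two for $q$ odd; and since the equivalence of codes is $G$-equivalence, which refines to (does not coarsen below) these $K$-orbits because they are distinguished by a $G$-invariant, the number of equivalence classes equals the number of $K$-orbits. The main obstacle I anticipate is confirming that $\Sigma_{{\mathrm{GF}}(q^3)}$ and $\Sigma_{{\mathrm{TF}}(q^3)}$ remain distinct as $G$-orbits in $\PG(8,q)$ for $q$ odd, rather than merging under the larger Segre-stabiliser group $G$. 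Here I would lean on the semifield interpretation of Remark \ref{rem:Menichetti}: $\Sigma_{{\mathrm{GF}}(q^3)}$ corresponds to the field $\Fq^3$ (the cubic extension) and $\Sigma_{{\mathrm{TF}}(q^3)}$ to a genuinely non-associative twisted field, and for $q$ odd these lie in distinct isotopism classes by Menichetti's theorem, whence they give distinct $G$-orbits of MRD codes. For $q$ even the twisted-field orbit degenerates (each hyperplane in $\cH_1(q^3)$ contains the nucleus plane, so $\Sigma_{{\mathrm{TF}}(q^3)}$ is not of minimum rank $3$), leaving a single class. This yields the stated count of one class for $q>2$ even and two for $q$ odd.
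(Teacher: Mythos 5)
Your reduction to Theorem \ref{thm:cst3} (every CSRD code of minimum distance $3$ is a $3$-dimensional MSRD code, corresponding to a plane of minimum rank $3$, and these form one $K$-orbit for $q>2$ even and two for $q$ odd) is exactly the paper's setup, and the even case is then immediate in both treatments. The gap is in your handling of the odd case. Your primary suggestion --- that the non-merging of $\Sigma_{{\mathrm{GF}}(q^3)}$ and $\Sigma_{{\mathrm{TF}}(q^3)}$ as $G$-orbits ``can be settled by a rank-distribution or invariant argument exactly as in the proofs of Theorems \ref{equivqoddgeom} and \ref{equivqevengeom}'' --- does not work: those theorems distinguish orbits by their rank distributions over $\Fq$, but here both $K$-orbits consist of planes of \emph{constant} rank $3$, so over $\Fq$ they have the identical rank distribution (all $q^2+q+1$ points of rank $3$; as codes, $(1,0,0,q^3-1)$). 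No invariant computed this way separates them. Your fallback via Remark \ref{rem:Menichetti} is a genuinely different route and can be made to work, but as written it has an unproven key step: you assert that $\Sigma_{{\mathrm{TF}}(q^3)}$ corresponds to a non-associative twisted field. Without an independent verification of that identification (e.g.\ writing down the multiplication or computing nuclei), the argument is circular --- the fact that the two $K$-orbits yield non-isotopic semifields is precisely what must be proven, since distinct $K$-orbits could a priori merge under $G$. Note also that the paper explicitly states it classifies these codes \emph{without} relying on the results of Remark \ref{rem:Menichetti}, so your route imports machinery the paper deliberately avoids.

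The paper's actual device for the odd case is different and self-contained: from the proof of Theorem \ref{thm:cst3}, the two orbits are distinguished by their rank distributions \emph{after extending scalars to} $\bF_{q^3}$. A plane in $\Sigma_{{\mathrm{GF}}(q^3)}$ extends to a plane of $\PG(5,q^3)$ containing three rank-one points (the vertices $\nu_3(P)$, $\nu_3(P^q)$, $\nu_3(P^{q^2})$ of the triangle), whereas a plane in $\Sigma_{{\mathrm{TF}}(q^3)}$ extends to a plane with no rank-one points. Hence their embeddings define distinct orbits under the group $G(q^3)\leq \PGL(9,q^3)$ stabilising $\mathcal{S}_{3,3}(\bF_{q^3})$; and since the subgeometry embedding $\PG(8,q)\subset\PG(8,q^3)$ realises $G$ as a subgroup of $G(q^3)$, any $G$-equivalence over $\Fq$ would force a $G(q^3)$-equivalence over $\bF_{q^3}$, a contradiction. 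This extension-of-scalars step is the idea missing from your proposal; with it, no appeal to Menichetti's classification is needed.
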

\begin{proof}
 This is immediate for $q>2$ even, since there is only one $K$-orbit of planes of minimum rank 3. For $q$ odd, it follows from the proof of Theorem \ref{thm:cst3} that the $K$-orbits $\Sigma_{{\mathrm{GF}}(q^3)}$  and $\Sigma_{{\mathrm{TF}}(q^3)}$ have different rank distributions in $\PG(5,q^3)$. It follows that their embeddings in $\PG(8,q^3)$  define two distinct orbits under the group $G(q^3)\leq \PGL(9,q^3)$ stabilising the Segre variety $\mathcal{S}_{3,3}(\bF_{q^3})$ in $\PG(8,q^3)$. By considering the embedding of $\PG(8,q)$ as a subgeometry of $\PG(8,q^3)$, the group $G$ becomes a subgroup of the group $G(q^3)$. It follows that the $K$-orbits $\Sigma_{{\mathrm{GF}}(q^3)}$  and $\Sigma_{{\mathrm{TF}}(q^3)}$ do not merge as $G$-orbits in $\PG(8,q)$.
\end{proof}

\begin{Corollary}\label{C1}
There is a one-to-one correspondence between nets of conics in $\PG(2,q)$ with no singular conics and non-trivial perfect $\Fq$-linear MSRD codes in $M_{3\times 3}(\Fq)$. 
\end{Corollary}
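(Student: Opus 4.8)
The plan is to establish the bijection by combining the geometric classification already obtained in Theorem \ref{thm:cst3} with the coding-theoretic dictionary set up in Section \ref{geometricInt} and the remarks preceding this corollary. The key observation is that a net of conics in $\PG(2,q)$ with no singular conics corresponds, via the $K$-orbit/$\PGL(3,q)$-orbit correspondence, precisely to a plane in $\PG(5,q)$ of minimum rank $3$: a conic is singular exactly when its associated point in $\PG(5,q)$ has rank $\leq 2$, so ``no singular conics'' is the statement that the plane meets $\cV^{(2)}(\Fq)$ trivially, i.e. has minimum rank $3$. On the coding side, a plane of minimum rank $3$ is (by the dimension bound (\ref{eqn:dim_bound}) with $n=3$, $d=3$) exactly a $3$-dimensional MSRD code of minimum distance $3$, which by \cite{SymmMRD} is a non-trivial perfect code. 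So the correspondence is really just the restriction of the general plane-to-net dictionary to the minimum-rank-$3$ locus on both sides.

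First I would make explicit that, under the map $\delta$ of (\ref{eqn:delta}) together with the $K\cong\PGL(3,q)$ identification, $K$-orbits of planes in $\PG(5,q)$ biject with $\PGL(3,q)$-orbits of nets of conics, and that a point of $\PG(5,q)$ has rank $\leq 2$ iff its associated conic is singular (double line, real line-pair, or imaginary line-pair). Hence a plane $\pi$ has minimum rank $3$ if and only if the corresponding net $\cN(\pi)$ contains no singular conic. Second, I would invoke the discussion immediately preceding Theorem \ref{thm:cst3}: a $\Fq$-linear CSRD code of minimum distance $3$ in $M_{3\times 3}(\Fq)$ corresponds to a plane of minimum rank $3$, every such code is an MSRD code, and by \cite{SymmMRD} these MSRD codes are exactly the non-trivial perfect $\Fq$-linear codes of this type. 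Combining these two identifications gives a chain of bijections linking nets with no singular conics to non-trivial perfect MSRD codes.

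The remaining point to address is \emph{well-definedness of the equivalence relation} on each side: two nets are $\PGL(3,q)$-equivalent iff their planes are $K$-equivalent, whereas two SRD codes are linearly equivalent iff their embedded subspaces are $G$-equivalent in $\PG(8,q)$. Thus I must check that the passage from $K$-orbits in $\PG(5,q)$ to $G$-orbits in $\PG(8,q)$ does not merge or split the relevant classes in a way that destroys the bijection. This is precisely what Theorem \ref{Maind=3} supplies: the two $K$-orbits $\Sigma_{{\mathrm{GF}}(q^3)}$ and $\Sigma_{{\mathrm{TF}}(q^3)}$ (for $q$ odd), respectively the single orbit (for $q>2$ even), remain distinct as $G$-orbits and account for all such codes. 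So the orbit counts match on the nose, and the correspondence descends to a bijection of equivalence classes.

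I expect the main obstacle to be conceptual rather than computational: one must argue cleanly that ``perfect'' can be \emph{added} to the correspondence without loss, i.e. that the perfectness condition of \cite{SymmMRD} (odd order $n$ and minimum distance $3$) is automatically met by every code in the correspondence and imposes no extra restriction on the nets side, so the word ``perfect'' is descriptive rather than selective here. Once that is observed, the corollary follows immediately by assembling the bijections; no further calculation is needed, and I would simply cite Theorem \ref{thm:cst3}, Theorem \ref{Maind=3}, the $\delta$-correspondence, and \cite{SymmMRD} in sequence.
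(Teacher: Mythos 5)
Your reduction to Theorem \ref{thm:cst3}, Theorem \ref{Maind=3} and \cite{SymmMRD} is the right skeleton, but the step you dismiss as ``really just the restriction of the general plane-to-net dictionary'' is exactly where the content of the corollary lies, and your version of that step fails for $q$ even. You assert that the conics of $\cN(\pi)$ are the points of $\pi$, so that ``no singular conics'' is literally ``no points of rank $\leq 2$''. That is not the paper's correspondence: planes and nets correspond by duality (note the dimension reversal in Section \ref{orbitsofsystems}: points $\leftrightarrow$ squabs, lines $\leftrightarrow$ webs, solids $\leftrightarrow$ pencils), so the conics of $\cN(\pi)$ are the hyperplanes through $\pi$ read through $\delta^{-1}$, and the conic distribution of $\cN(\pi)$ is $OD_4(\pi)$, not the rank distribution of $\pi$. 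The pointwise identification you have in mind (a point $P$ corresponds to the conic with Gram matrix $M_P$) exists and is $K$-equivariant only for $q$ odd; in characteristic $2$, quadratic forms and symmetric matrices are genuinely different $\PGL(3,q)$-modules, and no equivariant point--conic identification matching ``rank $\leq 2$'' with ``singular'' can exist: among points of rank $\leq 2$ there are two $K$-orbits of size $q^2+q+1$ (namely $\cP_1$ and $\cP_{2,n}$), while the double lines form the only $\PGL(3,q)$-orbit of singular conics of that size; equivalently, $\lvert\cP_{2,s}\rvert=(q^2-1)(q^2+q+1)$ differs from the number $(q^2+q)(q^2+q+1)/2$ of pairs of real lines for $q>2$. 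So for $q$ even your first step collapses, and nothing later in your outline can repair it.

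What the corollary actually requires is the equivalence: $\pi$ has minimum rank $3$ if and only if $OD_4(\pi)=[0,0,0,q^2+q+1]$, i.e.\ if and only if $\cN(\pi)$ has no singular conics. Neither direction is formal. The forward direction is the hyperplane-orbit computation inside the proof of Theorem \ref{thm:cst3}: a hyperplane containing a minimum-rank-$3$ plane cannot contain a conic plane, a tangent plane, or $\pi_{\mathcal{N}}$, hence lies in $\cH_3$. The converse (a rank $\leq 2$ point in $\pi$ forces a singular conic in $\cN(\pi)$) needs its own argument: for $q$ odd one can use the polarity $\rho$, which exchanges point-orbit and hyperplane-orbit data; for $q$ even one needs Theorem \ref{OnePointInN} to handle rank-$2$ points of $\pi$ in $\pi_{\mathcal{N}}$ (each such point forces a double line in $\cN(\pi)$), together with separate arguments for a rank-$1$ point (then $\cN(\pi)$ has a base point $p$, and the conditions of being singular at $p$ cut out a subspace of codimension at most $2$ in the $3$-dimensional space of forms of $\cN(\pi)$, hence a nonzero singular member exists) and for a rank-$2$ point $\lambda_1\nu(p_1)+\lambda_2\nu(p_2)$ on a secant (then every $f\in\cN(\pi)$ satisfies $\lambda_1 f(p_1)+\lambda_2 f(p_2)=0$, so $\cN(\pi)$ contains at least a pencil of conics through both $p_1$ and $p_2$, and a dimension count inside the web of conics through $p_1,p_2$ produces a member divisible by the line $p_1p_2$). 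Your use of Theorem \ref{Maind=3} for the non-merging of $K$-orbits into $G$-orbits and of \cite{SymmMRD} for perfectness is correct, but those ingredients cannot substitute for the missing equivalence above, which is what the paper supplies through Theorem \ref{thm:cst3} and the duality dictionary.
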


\begin{Remark}
  Using the FinInG package \cite{fining} in GAP \cite{GAP}, one can verify the existence of three $\PGL(3,2)$-orbits of planes and three $\PGL(3,2)$-orbits of solids in $\PG(5,2)$ that have minimum rank 2 and are maximal with respect to this property. Additionally, there is a unique $\PGL(3,2)$-orbit of planes with minimum rank 3 that is also maximal in this regard. 
  Whereas, the group preserving $\cV(\mathbb{F}_2)$ is \(\operatorname{Sym}_7\), which does not preserve the rank distribution as it merges the two $K$-orbits $\cP_{2,n}$ and $\cP_3$. The underlying reason is that a conic plane ($K$-orbit $\Sigma_1$ in \cite{planesqeven}, defined as the image of a line under the Veronese map) meets $\cV(\mathbb{F}_2)$ in just 3 points, and so it has the same number of rank one points as a plane in the $K$-orbit $\Sigma_2$ as defined in \cite{planesqeven}. The bigger group \(\operatorname{Sym}_7\) does not preserve conic planes, and merges the two orbits $\Sigma_1$ and $\Sigma_2$. For the same reason, the group $\operatorname{Sym}_7$ does not preserve the set of planes of constant rank 3 points. From a coding-theoretic perspective, both actions  
  lack significant relevance.\\

\end{Remark}

\section{Impact on classifying nets of conics in $\PG(2,q)$}\label{linearsystems}

In this section, we use Theorems \ref{maximalplanesqodd},  \ref{nonmaximalplaneseven}, \ref{OnePointInN} and the correspondence between the set of nets of conics in $\PG(2,q)$ with empty bases and the set of planes in $\PG(5,q)$ disjoint from $\cV(\Fq)$ (see \eqref{orbitsofsystems}) to conclude that a net of conics with an empty base and at least one pair of lines (real or imaginary) contains a pencil of conics with an empty base. This result provides a better  understanding of the configurations defining these nets, and thereby contributing to the classification of nets of conics in $\PG(2,q)$ with empty bases. In the following two corollaries,  $\mathcal{P}_i$ represents  the $\PGL(3,q)$-orbit of  pencils of conics associated with the $K$-orbit of lines $o_i$ (see Tables \ref{tableoflineseven} and \ref{solidsodd}).

\begin{Corollary}\label{netsb0odd}
    For $q$ odd, a net of conics ${\mathcal{N}}$ in $\PG(2,q)$ that has an empty base and at least one singular conic is of the form: $\mathcal{N}=\langle \mathscr{P}, \mathcal{C}\rangle$ where $\mathscr{P}\in\mathscr{P}_{8,2}\cup\mathscr{P}_{14,2}\cup\mathscr{P}_{15,2}$ and $\mathcal{C}$ is a conic in $\PG(2,q)$ different from a double line. Note that $\mathscr{P}$ has an empty base and $\mathcal{N}$ has no double lines.
\end{Corollary}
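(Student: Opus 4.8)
The plan is to translate the statement into the geometry of $\PG(5,q)$ via the correspondence of \S\ref{orbitsofsystems}, apply Theorem \ref{maximalplanesqodd}, and then dualise. First I would attach to $\cN$ the plane $\pi$ in $\PG(5,q)$ associated with it, so that the conics of $\cN$ are exactly the hyperplanes through $\pi$ (via $\delta$) and the base points of $\cN$ are the rank-one points of $\pi$. Since $\cN$ has an empty base, $\pi$ is disjoint from $\cV(\Fq)$ and so has minimum rank at least $2$. Because $q$ is odd, an empty base also forces $\cN$ to contain no double lines (a net has non-empty base iff it is rank one), so the singular conic promised by the hypothesis is necessarily a pair of (real or imaginary) lines; equivalently, $\pi$ is contained in a hyperplane of $\cH_{2,r}\cup\cH_{2,i}$.

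The crucial step is to sharpen ``minimum rank at least $2$'' to ``minimum rank exactly $2$''. I would do this by excluding minimum rank $3$: by Theorem \ref{thm:cst3}, every hyperplane through a plane of minimum rank $3$ lies in $\cH_3$, i.e. such a plane carries a net of only non-singular conics. Since $\pi$ lies in a singular hyperplane, its hyperplane-orbit distribution is not $[0,0,0,q^2+q+1]$, so $\pi$ is not of minimum rank $3$; together with $\pi\cap\cV(\Fq)=\emptyset$ this yields minimum rank exactly $2$. In the real case one can also exhibit a rank-$2$ point directly: if $H\in\cH_{2,r}$ with $H\cap\cV(\Fq)=\cC_1\cup\cC_2$, then the conic plane $\langle\cC_1\rangle$ and $\pi$ are two planes of $H\cong\PG(4,q)$, hence meet in a point, which has rank $2$ because $\pi$ misses $\cV(\Fq)$; the $OD_4$ argument has the advantage of covering the imaginary case uniformly.

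With minimum rank $2$ in hand, Theorem \ref{maximalplanesqodd} supplies a solid $S\in\Omega_{8,2}\cup\Omega_{14,2}\cup\Omega_{15,2}$ containing $\pi$. Dualising the inclusion $\pi\subset S$ under the net/pencil correspondence (a pencil lies in $\cN$ precisely when its associated solid contains $\pi$), I obtain a pencil $\mathscr{P}\in\mathscr{P}_{8,2}\cup\mathscr{P}_{14,2}\cup\mathscr{P}_{15,2}$ with $\mathscr{P}\subset\cN$. As $S$ has minimum rank $2$, it is disjoint from $\cV(\Fq)$, so its rank-one points, and hence the base points of $\mathscr{P}$, form the empty set; thus $\mathscr{P}$ has empty base. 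Choosing any $\cC\in\cN\setminus\mathscr{P}$ gives $\cN=\langle\mathscr{P},\cC\rangle$, and $\cC$ is not a double line since $\cN$ contains none.

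The main obstacle I anticipate is exactly the passage to minimum rank $2$, together with the bookkeeping of the duality. The hypothesis provides a singular member of $\cN$, which is a \emph{hyperplane} condition on $\pi$ (an entry of $OD_4(\pi)$), whereas Theorem \ref{maximalplanesqodd} asks for a rank-$2$ \emph{point} of $\pi$ (an entry of $OD_0(\pi)$); reconciling these two invariants is precisely what the minimum-rank-$3$ exclusion achieves. I would also verify carefully that the three solid orbits $\Omega_{8,2},\Omega_{14,2},\Omega_{15,2}$ dualise to the three named pencil orbits and that inclusion is correctly reversed, so that the pencil produced genuinely sits inside $\cN$ rather than inside its polar.
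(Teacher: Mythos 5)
Your skeleton is the paper's own derivation: the paper states Corollary \ref{netsb0odd} without a separate proof, as a consequence of the correspondence of Section \ref{orbitsofsystems} and Theorem \ref{maximalplanesqodd}, and your use of Theorem \ref{thm:cst3} to pass from ``$\cN$ has a singular conic'' (a condition on $OD_4(\pi)$) to ``$\pi$ has a rank-two point'' (the hypothesis of Theorem \ref{maximalplanesqodd}) is precisely the bridge the paper leaves implicit. The dualisation bookkeeping is also handled correctly: $\mathscr{P}(S)\subseteq\cN(\pi)$ iff $\pi\subseteq S$, solids in $\Omega_{j,2}$ give pencils in $\mathscr{P}_{j,2}$, and the empty base of $\mathscr{P}$ follows from $S\cap\cV(\Fq)=\emptyset$.

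There is, however, one step that fails: the claim that for $q$ odd an empty base forces $\cN$ to contain no double lines, which you justify by reading ``nets with non-empty bases correspond to rank-one nets'' as an equivalence for an individual net. That sentence of Section \ref{orbitsofsystems} records that the two families are exchanged by the polarity $\rho$ of $\PG(5,q)$, which swaps points of $\cV(\Fq)$ with hyperplanes of $\cH_1$ (so $h_1(\pi)=r_1(\pi^{\rho})$); it is a statement about classifications, not a biconditional. As a biconditional it is false: the solid $S=\cZ(Y_0,Y_3-\delta Y_5)$ representing $\Omega_{8,2}$ in Table \ref{solidsodd} is disjoint from $\cV(\Fq)$ but lies in the hyperplane $\cZ(Y_0)\in\cH_1$, so any plane $\pi\subset S$ yields a net with empty base containing the double line $\cZ(X_0^2)$; equivalently, pencils in $\mathscr{P}_{8,2}$ have conic distribution $[1,0,1,q-1]$ (Corollary \ref{pencilintrepqodd}) and hence already contain a double line. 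This has three consequences for your write-up: (i) your inference that the promised singular conic ``is necessarily a pair of lines'' is unjustified, but harmless, since your rank-3 exclusion only needs \emph{some} singular conic; (ii) your justification that $\cC$ is not a double line (``since $\cN$ contains none'') collapses --- replace it by counting: $\cN\setminus\mathscr{P}$ has $q^2$ members while $\cN$ has $h_1(\pi)=r_1(\pi^{\rho})\le q+1$ double lines, so a suitable $\cC$ exists; (iii) the final clause of the statement itself, ``$\cN$ has no double lines'', is false whenever $\mathscr{P}\in\mathscr{P}_{8,2}$, so that part of the paper's Note cannot be proved at all --- your argument reproduces the paper's slip rather than detecting it.
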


\begin{Corollary}\label{netsb0even}
    For $q>2$ even, a net of conics ${\mathcal{N}}$ in $\PG(2,q)$ that has an empty base and at least one pair of real or conjugate imaginary lines is of the form: $\mathcal{N}=\langle \mathscr{P}, \mathcal{C}\rangle$ where $\mathscr{P}\in \mathscr{P}_{7}\cup\mathscr{P}_{13}\cup\mathscr{P}_{14}$ and $\mathcal{C}$ is a conic in $\PG(2,q)$. Note that $\mathscr{P}$ has an empty base.
\end{Corollary}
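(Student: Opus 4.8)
The plan is to pass to the geometric model in $\PG(5,q)$ and reduce the statement to the extension results for planes of minimum rank $2$ already established for $q$ even. Under the correspondence of Section \ref{orbitsofsystems}, the net $\mathcal{N}$ corresponds to a plane $\pi$ in $\PG(5,q)$, the conics of $\mathcal{N}$ being exactly the hyperplanes of $\PG(5,q)$ through $\pi$. Since $\mathcal{N}$ has an empty base, $\pi$ is disjoint from $\cV(\Fq)$, and the hypothesis that $\mathcal{N}$ contains a pair of real or conjugate imaginary lines means precisely that $\pi$ is contained in at least one hyperplane of $\cH_{2,r}\cup\cH_{2,i}$, i.e. $h_{2,r}(\pi)+h_{2,i}(\pi)>0$.

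First I would show that $\pi$ has minimum rank $2$. Being disjoint from $\cV(\Fq)$, the plane $\pi$ has minimum rank $2$ or $3$. If it had minimum rank $3$, then by Theorem \ref{thm:cst3} every hyperplane through $\pi$ lies in $\cH_3$, forcing $h_{2,r}(\pi)=h_{2,i}(\pi)=0$ and contradicting the hypothesis. Hence $\pi$ has minimum rank $2$.

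Next I would exclude the three maximal orbits. By Theorem \ref{CSRDplanesqeven} and the corollary following it, the only planes of minimum rank $2$ that do not extend to a solid of minimum rank $2$ are those in $\Sigma_{\mathcal{N}}\cup\Sigma_{16}\cup\Sigma_{18}$. Their conic distributions are $[q^2+q+1,0,0,0]$, $[q+1,0,0,q^2]$ and $[1,0,0,q^2+q]$ respectively (the first because every hyperplane through $\pi_{\mathcal{N}}$ lies in $\cH_1$, cf. Theorem \ref{OnePointInN}, the latter two from Theorems \ref{[0,q+1,0,q^2]} and \ref{[0,1,0,q^2+q]}); in each case $h_{2,r}=h_{2,i}=0$. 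Since $h_{2,r}(\pi)+h_{2,i}(\pi)>0$, the plane $\pi$ lies in none of these orbits, and therefore extends to a solid $S$ of minimum rank $2$, that is $S\in\Omega_{7}\cup\Omega_{13}\cup\Omega_{14}$.

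Finally I would translate back. The solid $S$ corresponds to a pencil $\mathscr{P}\in\mathscr{P}_{7}\cup\mathscr{P}_{13}\cup\mathscr{P}_{14}$, which has empty base because $S$ has minimum rank $2$; and since $\pi\subset S$, the inclusion-reversing duality $\delta$ gives $\mathscr{P}\subseteq\mathcal{N}$. Choosing any conic $\mathcal{C}\in\mathcal{N}\setminus\mathscr{P}$ (equivalently, any hyperplane through $\pi$ not through $S$) yields $\mathcal{N}=\langle\mathscr{P},\mathcal{C}\rangle$, as required. The one point needing care throughout is the bookkeeping of the duality $\delta$: containment of subspaces in the Veronese model $\PG(5,q)$ reverses to containment of the associated linear systems, so the inclusion $\pi\subset S$ really does yield a pencil contained in the net. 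Beyond this the argument is an assembly of the earlier classification results, so I do not expect a serious obstacle; the substance is the observation that the hypothesis on line pairs is exactly what simultaneously rules out the minimum-rank-$3$ planes and the three maximal minimum-rank-$2$ orbits, leaving precisely the planes that extend to a solid of minimum rank $2$.
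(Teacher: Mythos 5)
Your proof is correct and takes essentially the same approach the paper intends: the paper states this corollary without an explicit proof, deriving it from Theorem \ref{nonmaximalplaneseven}, Theorem \ref{OnePointInN} and the inclusion-reversing correspondence between nets/pencils of conics and planes/solids in $\PG(5,q)$, which is exactly the machinery you assemble. Your write-up additionally makes explicit the two steps the paper leaves implicit --- excluding minimum rank $3$ via Theorem \ref{thm:cst3} and excluding the maximal orbits $\Sigma_{\mathcal{N}}\cup\Sigma_{16}\cup\Sigma_{18}$ via their conic distributions --- both of which are handled correctly.
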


For future reference, we record the following results, which can be derived from Theorems \ref{[0,q+1,0,q^2]}, \ref{[0,1,0,q^2+q]} and \ref{Maind=3}.

\begin{Corollary}
    There is a unique $\PGL(3,q)$-orbit of nets of conics in $\PG(2,q)$ with conic distribution $[1,0,0,q^2+q]$ (resp. $[q+1,0,0,q^2]$) for $q>2$ even.
\end{Corollary}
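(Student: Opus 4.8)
The plan is to route the statement through the dictionary of Section~\ref{orbitsofsystems} and reduce it to the two classification results already in hand. Under that correspondence a net of conics in $\PG(2,q)$ is the image of a plane $W$ of $\PG(5,q)$, and the conic distribution of the net is exactly the hyperplane-orbit distribution $OD_4(W)=[h_1,h_{2,r},h_{2,i},h_3]$. So it suffices to prove that the planes $W$ with $OD_4(W)=[1,0,0,q^2+q]$ (respectively $[q+1,0,0,q^2]$) form a single $K$-orbit. Theorems~\ref{[0,1,0,q^2+q]} and \ref{[0,q+1,0,q^2]} already produce one such orbit, $\Sigma_{18}$ (resp.\ $\Sigma_{16}$), together with its conic distribution; the real content is the converse, namely that a plane with the prescribed conic distribution must have the prescribed point-orbit distribution, after which the named theorem pins down the $K$-orbit and uniqueness of the net-orbit follows.

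First I would extract the cheap constraints. The hypothesis gives $h_{2,r}=h_{2,i}=0$, so the net has no real or conjugate imaginary line-pairs, and $h_1\in\{1,q+1\}$. By Theorem~\ref{OnePointInN}, $r_{2,n}(W)=h_1$, and since every point of $\pi_{\mathcal N}$ has rank $2$ this forces $\lvert W\cap\pi_{\mathcal N}\rvert=h_1$; thus $W$ meets $\pi_{\mathcal N}$ in a single point when $h_1=1$ and in a line when $h_1=q+1$, and in both cases $W\cap\pi_{\mathcal N}\neq\emptyset$. The target is then $OD_0(W)=[0,h_1,0,q^2+q+1-h_1]$, i.e.\ the vanishing of the rank-$1$ count $r_1(W)$ and of the count $r_{2,s}(W)$ of rank-$2$ points off $\pi_{\mathcal N}$.

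The step $r_{2,s}(W)=0$ is clean once we know $W$ has minimum rank $2$. Indeed, if such a $W$ contained a rank-$2$ point outside $\pi_{\mathcal N}$, then by Theorem~\ref{nonmaximalplaneseven} it would lie in a solid $S\in\Omega_7\cup\Omega_{13}\cup\Omega_{14}$. Every hyperplane through $S$ is a hyperplane through $W$, so the pencil of conics attached to $S$ is a subsystem of the net; but (as recorded in the proof of Theorem~\ref{CSRDplanesqeven}) each pencil coming from $\Omega_7\cup\Omega_{13}\cup\Omega_{14}$ carries a pair of conjugate imaginary lines, which would then be a line-pair of the net, contradicting $h_{2,i}=0$. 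Granting both vanishings, Theorem~\ref{[0,1,0,q^2+q]} (resp.\ \ref{[0,q+1,0,q^2]}) identifies $W$ with $\Sigma_{18}$ (resp.\ $\Sigma_{16}$); note also that the three conic distributions $[q^2+q+1,0,0,0]$, $[q+1,0,0,q^2]$, $[1,0,0,q^2+q]$ of the maximal minimum-rank-$2$ planes $\Sigma_{\mathcal N},\Sigma_{16},\Sigma_{18}$ are pairwise distinct, so no ambiguity arises.

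I expect the genuine obstacle to be showing $r_1(W)=0$, i.e.\ that the net has empty base and $W$ really has minimum rank $2$ rather than $1$. The approach I would take is to prove that a plane meeting \emph{both} $\cV(\Fq)$ and $\pi_{\mathcal N}$ cannot satisfy $h_{2,r}=h_{2,i}=0$: such a plane lies in one of the finitely many $K$-orbits of planes meeting $\cV(\Fq)$ non-trivially (the orbits $\Sigma_1,\dots,\Sigma_{15}$ of \cite{planesqeven}, among them the list $\Sigma_3,\Sigma_4,\Sigma_8,\Sigma_9,\Sigma_{10},\Sigma_{11},\Sigma_{15}$ that already surfaces in the proof of Theorem~\ref{[0,1,0,q^2+q]} for the point case, together with the analogous short list in the line case), and a direct inspection of the conic distributions tabulated in \cite{planesqeven} shows that each of these orbits contains at least one line-pair. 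This contradicts $h_{2,r}=h_{2,i}=0$ and yields $r_1(W)=0$. The delicate point, and where the full plane-classification of \cite{planesqeven} is genuinely used, is to confirm that \emph{no} orbit of planes possessing a rank-$1$ point is simultaneously free of line-pairs while still meeting $\pi_{\mathcal N}$; a useful sanity check is that a plane with many rank-$1$ points (e.g.\ a conic plane, where the whole base line forces every member to split off that line) has nets consisting entirely of line-pairs, which is consistent with the claim.
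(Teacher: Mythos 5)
Your proposal is correct, and it is in fact more complete than the paper's own treatment: the paper records this corollary \emph{without proof}, saying only that it ``can be derived from'' Theorems \ref{[0,q+1,0,q^2]} and \ref{[0,1,0,q^2+q]}. Those theorems classify planes by their \emph{point}-orbit distribution and compute the conic distribution only in the forward direction, so, exactly as you observe, the real substance of the corollary is the converse implication (conic distribution $\Rightarrow$ point-orbit distribution $\Rightarrow$ $K$-orbit), which the paper leaves implicit, presumably because the authors have the full classification of planes meeting $\pi_{\mathcal{N}}$ from \cite{planesqeven} and \cite{netswith//qeven} at hand. Your chain of deductions supplies this converse correctly: Theorem \ref{OnePointInN} forces $r_{2,n}(W)=h_1\in\{1,q+1\}$; Theorem \ref{nonmaximalplaneseven} together with the hyperplane-orbit distributions of $\Omega_{7},\Omega_{13},\Omega_{14}$ in Table \ref{solidseven} (each has $h_{2,i}\geq 1$) eliminates rank-$2$ points off $\pi_{\mathcal{N}}$ once minimum rank $2$ is known; and $r_1(W)=0$ reduces to your claim that no plane meeting both $\cV(\Fq)$ and $\pi_{\mathcal{N}}$ is free of line-pair hyperplanes. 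That claim is true, so your strategy closes the gap. Two refinements are worth recording. First, the orbits you must inspect are \emph{all} $\Sigma_i$, $1\leq i\leq 15$, with $r_{2,n}\geq 1$, not only the seven orbits quoted from part (ii) of the proof of Theorem \ref{[0,1,0,q^2+q]}: that list carries the extra hypothesis $r_3\geq q+1$, which you do not have a priori; for instance conic planes ($\Sigma_1$) must also be covered (their nets consist of the conics $\ell m$ with $m$ a line, hence contain $q^2+q$ real line pairs, consistent with your claim). Second, the dependence on external tables can be removed entirely: taking the base point to be $(0,0,1)$, every member of the net is $aX_0^2+bX_1^2+dX_0X_1+eX_0X_2+fX_1X_2$, whose discriminant in characteristic $2$ is $af^2+be^2+def$, and whose double lines are exactly the members with $d=e=f=0$; a short computation in each of the two cases $h_1=1$ and $h_1=q+1$ then produces a singular member with $(d,e,f)\neq(0,0,0)$, i.e.\ a line pair, which is the contradiction you need. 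With either justification, your argument is valid and makes explicit the verification that the paper's one-line derivation glosses over.
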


\begin{Corollary}\label{[0,0,0,q^2+q+1]}
    There are two $\PGL(3,q)$-orbits of nets of conics in $\PG(2,q)$ with no singular conics for $q$ odd. Whereas, nets of conics with no singular conics in $\PG(2,q)$, $q>2$ even, form a unique $\PGL(3,q)$-orbit.
\end{Corollary}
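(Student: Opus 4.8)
The plan is to identify nets with no singular conics with planes of minimum rank $3$ in $\PG(5,q)$ and then to read off the count from the classification of the latter in Theorem~\ref{thm:cst3}. Concretely, I would invoke Corollary~\ref{C1}: a net of conics in $\PG(2,q)$ has no singular conics exactly when the corresponding $\Fq$-linear SRD code is a non-trivial perfect MSRD code of minimum distance $3$, and geometrically such a code is precisely a plane of $\PG(5,q)$ disjoint from $\cV^{(2)}(\Fq)$, i.e. of minimum rank $3$. Since the dictionary of Section~\ref{orbitsofsystems} carries the $\PGL(3,q)$-action on nets to the action of $K\cong\PGL(3,q)$ on planes, counting $\PGL(3,q)$-orbits of nets with no singular conics is the same as counting $K$-orbits of planes of minimum rank $3$.

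With this reduction in hand the result is immediate from Theorem~\ref{thm:cst3}: for $q$ odd there are exactly two $K$-orbits of planes of minimum rank $3$, namely $\Sigma_{{\mathrm{GF}}(q^3)}$ and $\Sigma_{{\mathrm{TF}}(q^3)}$, and for $q>2$ even there is exactly one. Transporting these back through the dictionary yields two $\PGL(3,q)$-orbits of nets with no singular conics when $q$ is odd and a single orbit when $q>2$ is even, which is the assertion. That $\Sigma_{{\mathrm{GF}}(q^3)}$ and $\Sigma_{{\mathrm{TF}}(q^3)}$ are genuinely distinct $K$-orbits is already part of Theorem~\ref{thm:cst3}, so no further separation argument is needed here.

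The step I expect to require the most care is the equivalence underlying the reduction, namely that a net has no singular conics precisely when its associated plane $W$ has minimum rank $3$, equivalently $OD_4(W)=[0,0,0,q^2+q+1]$. One direction is exactly Theorem~\ref{thm:cst3}. For the converse, a point of rank at most $2$ in $W$ yields an $\Fq$-rational zero of the discriminant cubic of the net, and hence a singular conic, so $OD_4(W)=[0,0,0,q^2+q+1]$ forces minimum rank $3$. For $q$ odd this is cleanest via the polarity exchanging conic and tangent planes, under which $OD_4(W)$ records the point-ranks of the polar image of $W$; for $q>2$ even, where conic type and matrix rank no longer coincide, the statement must instead be extracted from the hyperplane analysis in the proof of Theorem~\ref{thm:cst3}, or simply quoted from Corollary~\ref{C1}.
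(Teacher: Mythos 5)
Your proposal is correct and follows essentially the paper's own route: the paper likewise records this corollary as a direct consequence of the net--plane dictionary (conic distribution $=OD_4$) combined with the classification of minimum-rank-$3$ planes in Theorem~\ref{thm:cst3}, packaged through Corollary~\ref{C1} and Theorem~\ref{Maind=3}, with no further argument given. The only caveat is that your intermediate claim that a rank-$\le 2$ point of $W$ yields a rational zero of the discriminant cubic is precisely the converse direction that the paper also leaves implicit in Corollary~\ref{C1}; it is in fact true (such a point lies in some conic plane $\langle\nu(\ell)\rangle$, the span of $W$ and that conic plane lies in a hyperplane, and every hyperplane containing $\langle\nu(\ell)\rangle$ corresponds under $\delta$ to a conic divisible by $\ell$, hence singular), but since you ultimately fall back on quoting Corollary~\ref{C1}, your write-up sits at the same level of completeness as the paper's.
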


\section{Comparison with the construction by Schmidt}\label{Comparison with known constructions of MSRD codes}

In \cite{Schmidt1,Schmidt}, additive SRD codes in $M_{n \times n}(\Fq)$ were studied by Schmidt as subgroups of the set $\mathscr{S}_n(\Fq)$ of symmetric bilinear forms on an $n$-dimensional vector space over $\Fq$ (referred to as  $(n,d)$-sets or $d$-codes). Motivated by their connections to classical coding theory, including Kerdock codes, $\mathbb{Z}_4$-Delsarte-Goethals codes and cyclic codes, MSRD codes in $M_{n \times n}(\Fq)$ were investigated as optimal-size subsets of $\mathscr{S}_n(\Fq)$. The main tools used for these studies  were association schemes of alternating bilinear forms for $q$ even and association schemes of symmetric bilinear forms for $q$ odd.
In \cite{Schmidt1}, a construction of $(n,n-2t)$-codes (resp. $(n,n-2t+1)$-codes) of sizes $q^{n(t+1)}$ (resp. $q^{(n+1)t}$) for an integer $0\leq t\leq \frac{n-1}{2}$ (resp. $1\leq t\leq \frac{n}{2}$)
was provided for $n-d$ even (resp. $n-d$ odd). We denote this construction by $Y(t,n,q)$.\\

With this notation, for $3-d$ even, $Y(0,3,q)$ and $Y(1,3,q)$ correspond to $\Fq$-linear MSRD codes in $M_{3 \times 3}(\Fq)$ of size $q^3$ and minimum distances $3$, and size $q^6$ and minimun distance $1$, respectively. Thus $Y(0,3,q)$ corresponds to a plane of constant rank $3$, while $Y(1,3,q)$ corresponds to the whole set of $3\times 3$ symmetric matrices over $\Fq$. For $3-d$ odd, $Y(1,3,q)$ is an $\Fq$-linear MSRD code in $M_{3 \times 3}(\Fq)$ (of size $q^4$) with minimum distance $2$, which corresponds to a solid in $\PG(5,q)$ disjoint from the Veronese surface. In terms of linear system of conics, $Y(0,3,q)$ corresponds to a net of conics in $\PG(2,q)$ with no singular conics, and $Y(1,3,q)$ with $d=2$ corresponds to a pencil of conics in $\PG(2,q)$ with  empty base. By Corollaries \ref{pencilintrepqodd} and \ref{pencilintrepqeven}, such a pencil has conic distribution: $[1,0,1,q-1]$, $[0,1,2,q-2]$, or $[0,0,1,q]$. Additionally, we refer to \cite{Schmidt} for a similar construction of optimal $(n,d)$-sets when $n-d$ is even.

\section{Final Remarks}
In general, $\mathbb{F}_q$-linear CSRD codes in $M_{n \times n}(\mathbb{F}_q)$ with minimum distance $d$ can be studied as subspaces of $\PG(N-1, q)$ that intersect the $(d-1)$-st secant variety of the Veronese variety $\mathcal{V}^{(d-1)}_{n}(\mathbb{F}_q)$ trivially (i.e. subspaces of minimum rank $d$) and are maximal with respect to this property. While every MSRD code is a CSRD code, obtaining the conditions under which the converse fails remains an open question, particularly for $n > 3$. Note that, the study of $\Fq$-linear CSRD codes is significant as it facilitates the study of $\Fq$-linear SRD codes that can be extended to the maximum size.\\

The geometric interpretation of the $\Fq$-linear CSRD codes in $M_{3 \times 3}(\Fq)$ and their connection to linear systems of conics in $\PG(2, q)$ as  presented in this paper, suggests a potential correspondence  between $\Fq$-linear CSRD codes in $M_{n \times n}(\Fq)$ and linear systems of quadrics in $\PG(n-1, q)$. This might yield intriguing links between classical coding theory and linear systems of quadrics in $\PG(n-1, q)$ through the lens of $\Fq$-linear MSRD codes in $M_{n \times n}(\Fq)$ (see \cite{Schmidt1,Schmidt}).

\section*{Acknowledgements}

The first author acknowledges the support of the \textit{Croatian Science Foundation}, project number HRZZ-UIP-2020-02-5713. The second author is supported in part by the Slovenian Research and Innovation Agency (ARIS) under research program P1-0285 and research project J1-50000.

\begin{appendices}

In this appendix we gather in Tables \ref{tableoflineseven} and  \ref{solidsodd} 
 representatives of the $15$ $K$-orbits of lines in $\PG(5,q)$,  their point-orbit distributions and hyperplane-orbit distributions from \cite{lines,webs}. In Tables \ref{solidseven} and \ref{solidsodd}, we list representatives of the $15$ $K$-orbits of solids in $\PG(5,q)$, their point-orbit distributions and hyperplane-orbit distributions from \cite{solidsqeven,lines,webs}. Notation is as defined in Section~\ref{pre}; in particular, notation in the second column is as in \eqref{egSolid}.

\begin{table}[!htbp]
\begin{center}
\scriptsize
\begin{tabular}[h]{l ll ll} 
 \toprule
$L^K$& Representatives&$OD_0(L)$& $OD_4(L)$\\ \midrule \vspace{3pt}
$o_5$&$\begin{bmatrix}x&\cdot&\cdot\\\cdot&y&\cdot\\ \cdot&\cdot&\cdot\end{bmatrix}$ &$[2,0,q-1,0]$& $[1,2q^2+q,0,q^3-q^2]$   \\\vspace{3pt}
$o_6$&$\begin{bmatrix}x&y&\cdot\\y&\cdot&\cdot\\ \cdot&\cdot&\cdot\end{bmatrix}$  &$[1,1,q-1,0]$& $[q+1, \frac{3q^2+q}{2},\frac{q^2-q}{2}, q^3-q^2]$   \\\vspace{3pt}
$o_{8,1}$ & $\begin{bmatrix}x&\cdot&\cdot\\\cdot&y&\cdot\\ \cdot&\cdot&-y\end{bmatrix}$&$[1,0,1,q-1]$& $[1,q^2+\frac{3}{2}q,\frac{q}{2},q^3-q]$ 
   \\\vspace{3pt}
$o_{8,3}$ &$\begin{bmatrix}x&\cdot&\cdot\\\cdot&\cdot&y\\ \cdot&y&\cdot\end{bmatrix}$ &$[1,1,0,q-1]$& $[q+1,q^2+q,0,q^3-q]$    \\ \vspace{3pt}
$o_9$ & $\begin{bmatrix}x&\cdot&y\\\cdot&y&\cdot\\ y&\cdot&\cdot\end{bmatrix}$ &$[1,0,0,q]$& $[1,q^2+q,0,q^3]$    \\\vspace{3pt}
$o_{10}$ & $\begin{bmatrix}v_0x&y&\cdot\\y&x+uy&\cdot\\ \cdot&\cdot&\cdot\end{bmatrix}$ &$[0,0,q+1,0]$& $[1,q^2+q,q^2,q^3-q^2]$     \\\vspace{3pt}
$o_{12,1}$ &$\begin{bmatrix}\cdot&x&\cdot\\x&\cdot&y\\ \cdot&y&\cdot\end{bmatrix}$&$[0,q+1,0,0]$& $[q^2+q+1,\frac{q^2+q}{2},\frac{q^2-q}{2},q^3-q^2]$     \\ \vspace{3pt}
$o_{12,3}$ & $\begin{bmatrix}\cdot&x&\cdot\\x&x+y&y\\ \cdot&y&\cdot\end{bmatrix}$&$[0,1,q,0]$&   $[q+1,q^2+\frac{q}{2}, q^2-\frac{q}{2}, q^3-q^2]$   \\ \vspace{3pt}
$o_{13,1}$ &$\begin{bmatrix}\cdot&x&\cdot\\x&y&\cdot\\ \cdot&\cdot&-y\end{bmatrix}$ &$[0,1,1,q-1]$& 
 $[q+1,\frac{q^2}{2}+q,\frac{q^2}{2},q^3-q]$    \\ \vspace{3pt}
$o_{13,3}$ &$\begin{bmatrix}\cdot&x&\cdot\\x&x+y&\cdot\\ \cdot&\cdot&y\end{bmatrix}$ &$[0,0,2,q-1]$&  $[1,\frac{q^2+3q}{2},\frac{q^2+q}{2},q^3-q]$   \\ \vspace{3pt}
$o_{14,1}$ & $\begin{bmatrix}x&\cdot&\cdot\\\cdot&-(x+y)&\cdot\\ \cdot&\cdot&y\end{bmatrix}$&$[0,0,3,q-2]$&  $[1,\frac{q^2}{2}+2q,\frac{q^2}{2}+q,q^3-2q]$   \\\vspace{3pt}
$o_{15,1}$ &$\begin{bmatrix}v_1y&x&\cdot\\x&ux+y&\cdot\\ \cdot&\cdot&x\end{bmatrix}$ &$[0,0,1,q]$& $[1,\frac{q^2}{2}+q,\frac{q^2}{2},q^3]$     \\ \vspace{3pt}
$o_{16,1}$ & $\begin{bmatrix}\cdot&\cdot&x\\\cdot&x&y\\ x&y&\cdot\end{bmatrix}$&$[0,1,0,q]$& $[q+1,\frac{q^2+q}{2}, \frac{q^2-q}{2},q^3]$ \\\vspace{3pt}
$o_{16,3}$ &$\begin{bmatrix}\cdot&\cdot&x\\\cdot&x&y\\ x&y&y\end{bmatrix}$& $[0,0,1,q]$&  $[1,\frac{q^2}{2}+q,\frac{q^2}{2},q^3]$  \\  \vspace{3pt}
$o_{17}$ & $\begin{bmatrix}\alpha^{-1}x&y&\cdot\\y&\beta y-\gamma x&x\\ \cdot&x&y\end{bmatrix}$&$[0,0,0,q+1]$&  $[1,\frac{q^2+q}{2},\frac{q^2-q}{2},q^3+q]$  \\ \bottomrule

 \end{tabular}
 \caption{\label{tableoflineseven}\scriptsize The $15$ $K$-orbits of lines in $\PG(5,q)$ defined in \cite{lines}, their representatives, point-orbit distributions and hyperplane-orbit distributions for $q>2$ even.  The parameters $u,v_0,v_1,\alpha, \beta, \gamma$ in $\Fq$ are defined as follows: $v_i\lambda^2+uv_i\lambda-1 \neq 0$ for all $\lambda\in \Fq$ and $i \in\{0,1\}$ and
$\lambda^3 + \gamma \lambda^2 - \beta \lambda + \alpha \neq 0$ for all $\lambda \in \Fq$.}
\end{center}
\end{table}

\begin{table}[!ht]
\center
\scriptsize
\begin{tabular}{lllll}
\toprule
$S^K$ & Representatives&$OD_0(S)$& $OD_4(S)$
 & Conditions \\
\midrule 
\vspace{3pt}
$\Omega_1$ & 
$\begin{bmatrix} x&y&z\\y&t&\cdot\\z&\cdot&t \end{bmatrix}$ &    $[1,q+1,2q^2-1,q^3-q^2]$ & $[1,q/2,q/2,0]$
& \\
\vspace{3pt}
$\Omega_2$ & 
$\begin{bmatrix} x&y&z\\y&t&\cdot\\z&\cdot&\cdot \end{bmatrix}$   & $[q+1,q+1,2q^2-q-1,q^3-q^2]$ & $[1,q,0,0]$
& \\
\vspace{3pt}
$\Omega_3$ & 
$\begin{bmatrix} x&y&z\\y&\cdot&t\\z&t&\cdot \end{bmatrix}$  &   $[1,q^2+q+1,q^2-1,q^3-q^2]$ & $[q+1,0,0,0]$
& \\
\vspace{3pt}
$\Omega_4$ & 
$\begin{bmatrix} x&\cdot&y\\\cdot&z&\cdot\\y&\cdot&t \end{bmatrix}$  &   $[q+2,1,2q^2-2,q^3-q^2]$ & $[0,q+1,0,0]$
& \\
\vspace{3pt}
$\Omega_5$ & 
$\begin{bmatrix} \cdot&x&y\\x&z&t\\y&t&x \end{bmatrix}$  &    $[1,q+1,q^2-1,q^3]$ & $[1,0,0,q]$
& \\
\vspace{3pt}
$\Omega_6$ & 
$\begin{bmatrix} x&\cdot&y\\\cdot&z&t\\y&t&\cdot \end{bmatrix}$  &   $[2,q+1,q^2+q-2,q^3-q]$ & $[1,1,0,q-1]$ 
& \\
\vspace{3pt}
$\Omega_7$ & 
$\begin{bmatrix} x&y&z\\ y&x+\gamma y&t\\z&t&y \end{bmatrix}$   &     $[0,q+1,q^2+q,q^3-q]$ & $[1,0,1,q-1]$ 
& $\operatorname{Tr}(\gamma^{-1})=1$ \\
\vspace{3pt}
$\Omega_8$ & 
$\begin{bmatrix}x&y&z\\y&t&z\\z&z&y\end{bmatrix}$  &       $[3,1,q^2+2q-3,q^3-q]$ & $[0,2,0,q-1]$
& \\
\vspace{3pt}
$\Omega_9$ & 
$\begin{bmatrix}x&x&y\\x&z&t\\y&t&t\end{bmatrix}$ &      $[4,1,q^2+3q-4,q^3-2q]$ & $[0,3,0,q-2]$
& \\
\vspace{3pt}
$\Omega_{10}$ & 
$\begin{bmatrix}x&y&z\\y& y+\gamma t& t\\ z& t&y\end{bmatrix}$  &        $[1,1,q^2+2q-1,q^3-q]$ & $[0,1,1,q-1]$
& $\operatorname{Tr}(\gamma^{-1})=1$ \\
\vspace{3pt}
$\Omega_{11}$ & 
$\begin{bmatrix} x&y&z\\y&t&\cdot\\z&\cdot&y \end{bmatrix}$ &       $[2,1,q^2+q-2,q^3]$ & $[0,1,0,q]$
& \\
\vspace{3pt}
$\Omega_{12}$ & 
$\begin{bmatrix}x&y&z\\y&t&\gamma y+z\\z&\gamma y+z&y\end{bmatrix}$  &     $[2,1,q^2+q-2,q^3]$ & $[0,1,0,q]$ 
& $\operatorname{Tr}(\gamma^{-1})=1$ \\
\vspace{3pt}
$\Omega_{13}$ & 
$\begin{bmatrix} x&y&z\\y&\gamma x+y&t\\z&t&\gamma x+z \end{bmatrix}$ &     $[0,1,q^2+3q,q^3-2q]$ & $[0,1,2,q-2]$
& $\operatorname{Tr}(\gamma)=1$ \\
\vspace{3pt}
$\Omega_{14}$ & 
$\begin{bmatrix} x &y&\gamma x+y+\gamma t\\y&\gamma x+y&z\\\gamma x+y+\gamma t&z&t \end{bmatrix}$ &     $[0,1,q^2+q,q^3]$ & $[0,0,1,q]$ 
& $\operatorname{Tr}(\gamma)=1$ \\
\vspace{3pt}
$\Omega_{15}$ & 
$\begin{bmatrix}x&y&bz+cy\\y&z&t\\bz+cy&t&y\end{bmatrix}$ 
&          $[1,1,q^2-1,q^3+q]$ & $[0,0,0,q+1]$
& $b\lambda^3 + c\lambda + 1$ irreducible over $\bF_q$ \\
\bottomrule
\end{tabular}
\caption{\scriptsize \label{solidseven}The $15$ $K$-orbits of solids in $\PG(5,q)$ defined in \cite{solidsqeven}, their representatives, point-orbit distributions and hyperplane-orbit distributions for $q>2$ even. $\operatorname{Tr}$ denotes the trace map from $\mathbb{F}_q$ to $\mathbb{F}_2$.}

\end{table}

\begin{table}[!htbp]
\begin{center}
\scriptsize
\begin{tabular}[h]{ l l l l   l} 
 \toprule

 $S^K/L^K$ &Representatives  & Representatives & $OD_0(S)=OD_4(L)$& $OD_4(S)=OD_0(L)$  \\
 &of $S^K$&of $L^K$&&\\ \midrule \\
   \vspace{3pt}
 $ \Omega_5/o_5$  &		$\begin{bmatrix} .&x&y\\x&.&z\\y&z&t           \end{bmatrix}$ & 
 $\begin{bmatrix}x&\cdot&\cdot\\\cdot&y&\cdot\\ \cdot&\cdot&\cdot\end{bmatrix}$
 &$[1,2q^2+q,0,q^3-q^2]$     &$[2,\frac{q-1}{2},\frac{q-1}{2},0]$
 
 \\
  \vspace{3pt}

$\Omega_6/o_6$  &$\begin{bmatrix} .&.&x\\.&y&z\\x&z&t           \end{bmatrix}$ &
$\begin{bmatrix}x&y&\cdot\\y&\cdot&\cdot\\ \cdot&\cdot&\cdot\end{bmatrix}$ 
&$[q+1, \frac{3q^2+q}{2},\frac{q^2-q}{2}, q^3-q^2]$   &$[1,q,0,0]$
 \\ 

\vspace{3pt}

$\Omega_{8,1}/o_{8,1}$  &$\begin{bmatrix} .&x&y\\x&z&t\\y&t&z        \end{bmatrix}$ &
$\begin{bmatrix}x&\cdot&\cdot\\\cdot&y&\cdot\\ \cdot&\cdot&-y\end{bmatrix}$
&$[2, q^2+\frac{3q-1}{2},\frac{q-1}{2}, q^3-q]$    & $[1,1,0,q-1]$
 \\

\vspace{3pt}

 $\Omega_{8,2}/o_{8,2}$  &$\begin{bmatrix} .&x&y\\x&\delta z&t\\y&t&z           \end{bmatrix}$

 &$\begin{bmatrix}x&\cdot&\cdot\\\cdot&y&\cdot\\ \cdot&\cdot& -\delta y\end{bmatrix}$ 
 & $[0,q^2+\frac{3q+1}{2},\frac{q+1}{2},q^3-q]$   &$[1,0,1,q-1]$
 \\

\vspace{3pt}

  $\Omega_{9}/o_9$  &$\begin{bmatrix} .&x&y\\x&-y&z\\y&z&t           \end{bmatrix}$ &
  $\begin{bmatrix}x&\cdot&y\\\cdot&y&\cdot\\ y&\cdot&\cdot\end{bmatrix}$ 
  &$[1,q^2+q,0,q^3]$    &$[1,0,0,q]$ \\

\vspace{3pt}

  $\Omega_{10}/o_{10}$  &$\begin{bmatrix} x&uv_0x&y\\uv_0x&-v_0x&z\\y&z&t          \end{bmatrix}$  &
  $\begin{bmatrix}v_0x&y&\cdot\\y&x+uy&\cdot\\ \cdot&\cdot&\cdot\end{bmatrix}$ 
  &$[1,q^2+q,q^2,q^3-q^2]$   &$[0,\frac{q+1}{2},\frac{q+1}{2},0]$
  
  \\

 \vspace{3pt}

$\Omega_{12,1}/o_{12,1}$ &  $\begin{bmatrix} x&.&y\\.&z&.\\y&.&t           \end{bmatrix}$ &
$\begin{bmatrix}\cdot&x&\cdot\\x&\cdot&y\\ \cdot&y&\cdot\end{bmatrix}$
&$[q+2,q^2+\frac{q-1}{2}, q^2-\frac{q+1}{2}, q^3-q^2]$   & $[0,q+1,0,0]$
\\

\vspace{3pt}

$\Omega_{13,1}/o_{13,1}$ &$\begin{bmatrix} x&.&y\\.&z&t\\y&t&z          \end{bmatrix}$  &
$\begin{bmatrix}\cdot&x&\cdot\\x&y&\cdot\\ \cdot&\cdot&-y\end{bmatrix}$
& $[3,\frac{q^2+3q-2}{2},\frac{q^2+q-2}{2},q^3-q]$   &$[0,2,0,q-1]$ 
\\

\vspace{3pt}

$\Omega_{13,2}/o_{13,2}$ &  $\begin{bmatrix} x&.&y\\ .&\delta z&t\\y&t&z \end{bmatrix}$ &  
$\begin{bmatrix}\cdot&x&\cdot\\x&y&\cdot\\ \cdot&\cdot&-\delta y\end{bmatrix}$ 
&$[1,\frac{q^2+3q}{2},\frac{q^2+q}{2},q^3-q]$  &$[0,1,1,q-1]$
 \\

\vspace{3pt}

$\Omega_{14,1}/o_{14,1}$ & $\begin{bmatrix} x&y&z\\y&x&t\\z&t&x           \end{bmatrix}$  & 
$\begin{bmatrix}x&\cdot&\cdot\\\cdot&-(x+y)&\cdot\\ \cdot&\cdot&y\end{bmatrix}$
&$[4,\frac{q^2-1}{2}+2q-1,\frac{q^2-1}{2}+q-1,q^3-2q]$    &$[0,3,0,q-2]$
\\ 

\vspace{3pt}

$\Omega_{14,2}/o_{14,2}$ &  
$\begin{bmatrix} \delta x&y&z\\y&x&t\\z&t&\delta x           \end{bmatrix}$ &
$\begin{bmatrix}x&\cdot&\cdot\\\cdot&-\delta(x+y)&\cdot\\ \cdot&\cdot&y\end{bmatrix}$
&$[0,\frac{q^2+1}{2}+2q,\frac{q^2+1}{2}+q,q^3-2q]$     &$[0,1,2,q-2]$
\\  

\vspace{3pt}

$\Omega_{15,1}/o_{15,1}$ &  $\begin{bmatrix} x&y&z\\y&-v_1x&t\\z&t&-y+uv_1x          \end{bmatrix}$ &  
$\begin{bmatrix}v_1y&x&\cdot\\x&ux+y&\cdot\\ \cdot&\cdot&x\end{bmatrix}$

&$[2,\frac{q^2-1}{2}+q,\frac{q^2-1}{2},q^3]$   &$[0,1,0,q]$ 
 \\

\vspace{3pt}

$\Omega_{15,2}/o_{15,2}$ &  $\begin{bmatrix} x&y&z\\y&-v_2x&t\\z&t&-y+uv_2x          \end{bmatrix}$ &  
$\begin{bmatrix}v_2y&x&\cdot\\x&ux+y&\cdot\\ \cdot&\cdot&x\end{bmatrix}$
&$[0,\frac{q^2+1}{2}+q,\frac{q^2+1}{2},q^3]$   &$[0,0,1,q]$
 \\

 \vspace{3pt}
$\Omega_{16,1}/o_{16,1}$ & $\begin{bmatrix} x&y&z\\y&-z&.\\z&.&t           \end{bmatrix}$ & 
$\begin{bmatrix}\cdot&\cdot&x\\\cdot&x&y\\ x&y&\cdot\end{bmatrix}$
&$[2,\frac{q^2-1}{2}+q,\frac{q^2-1}{2},q^3]$     & $[0,1,0,q]$
   \\ 

\vspace{3pt}

$\Omega_{17}/o_{17}$ &  $\begin{bmatrix} \alpha\gamma z-\alpha t&x&y\\x&z&t\\y&t&-x-\beta z \end{bmatrix}$ &
$\begin{bmatrix}\alpha^{-1}x&y&\cdot\\y&\beta y-\gamma x&x\\ \cdot&x&y\end{bmatrix}$

&  $[1,\frac{q^2+q}{2},\frac{q^2-q}{2},q^3+q]$   &$[0,0,0,q+1]$
\\

\bottomrule
 \end{tabular}
 \caption{\label{solidsodd}{\scriptsize The $15$ $K$-orbits of solids and lines in $\PG(5,q)$ defined in \cite{lines,webs}, their representatives, point-orbit distributions and hyperplane-orbit distributions for $q$ odd. The parameters $\delta$, $u$, $v_0$, $v_1$ and $v_2$ in are defined such that  $\delta\not\in\square_q$, $ v_i\lambda^2+uv_i\lambda-1 \neq 0$ for all $\lambda\in \Fq$ and $i\in \{0,1,2\}$, $-v_1\in\square_q$ and $-v_2\not\in\square_q$. The parameters $\alpha$, $\beta$ and $\gamma$ are defined such that 
 $\lambda^3 + \gamma \lambda^2 - \beta \lambda + \alpha \neq 0$ for all $\lambda \in \Fq$.}}

\end{center}

\end{table}

\end{appendices}

\end{document}